\DeclareMathOperator{\interior}{int}
\DeclareMathOperator{\ext}{ext}
\DeclareMathOperator{\rank}{rank}
\DeclareMathOperator{\dist}{dist}
\DeclareMathOperator{\cone}{cone}
\DeclareMathOperator{\rec}{rec}
\DeclareMathOperator{\conv}{conv}
\DeclareMathOperator{\argmin}{argmin}
\DeclareMathAlphabet{\mymathbb}{U}{bbold}{m}{n}
\begin{document}
\newtheorem{oberklasse}{OberKlasse}
\newtheorem{lemma}[oberklasse]{Lemma}
\newtheorem{proposition}[oberklasse]{Proposition}
\newtheorem{theorem}[oberklasse]{Theorem}
\newtheorem{remark}[oberklasse]{Remark}
\newtheorem{corollary}[oberklasse]{Corollary}
\newtheorem{definition}[oberklasse]{Definition}

\newcommand{\R}{\mathbbm{R}}
\newcommand{\N}{\mathbbm{N}}
\newcommand{\Z}{\mathbbm{Z}}
\newcommand{\mc}{\mathcal}
\newcommand{\eps}{\varepsilon}
\renewcommand{\phi}{\varphi}
\newcommand{\mynote}[1]{{\begin{center}\textcolor{red}{\textsc{#1}}\end{center}}}

\allowdisplaybreaks[1] 

\title{A Galerkin approach to optimization in the space of convex and compact subsets of $\R^d$}
\author{Janosch Rieger\footnote{School of Mathematical Sciences,
Monash University, Victoria 3800, Australia;
telephone: +61(3)99020579; email: janosch.rieger@monash.edu.}}
\date{\today}
\maketitle

\begin{abstract}
The aim of this paper is to establish a theory of Galerkin approximations 
to the space of convex and compact subsets of $\R^d$ with favorable
properties, both from a theoretical and from a computational perspective.
These Galerkin spaces are first explored in depth and then used to solve 
optimization problems in the space of convex and compact subsets of $\R^d$
approximately.
\end{abstract}

{\small\noindent{\bf Keywords:}
Set optimization, Galerkin approximation, convex sets, polytopes.\\
{\bf AMS codes:} 65K10, 52A20, 52B11.}

\section{Introduction}

In the context of partial differential equations, Galerkin approximations 
are the conceptual link between their analysis and modern numerical methods 
for their solution.
Once the regularity of a solution is established, finite-dimensional 
Galerkin approximations to the corresponding function space are designed,
and efficient numerical methods compute approximate solutions 
to the discretized problems.
This simple, but very powerful idea has been driving the evolution
of major parts of applied mathematics for more than half a century.

\medskip

The aim of this article is to create an analog of Galerkin approximations
for problems posed in the space $\mc{K}_c(\R^d)$ of all nonempty, convex 
and compact subsets of $\R^d$.
Spaces of polytopes with prescribed facet orientations, which have been
described, e.g., in \cite{Alexandrov:05}, are natural candidates
for this purpose.

Our analysis in Section \ref{polsec} shows that these spaces are useful
objects, both from a theoretical as well as from a computational perspective.
They possess a natural system of coordinates, which can be characterized 
as the set of all vectors in $\R^N$ satisfying a linear inequality,
and they have approximation properties, which are very similar to those of
finite element spaces.

In Section \ref{optimsec}, we demonstrate that there exist nested Galerkin
sequences, which are the equivalents of nested conforming finite element schemes
in the world of sets, and that minimizers of auxiliary optimization problems 
posed in the Galerkin spaces approximate minimizers of optimization problems  
posed in $\mc{K}_c(\R^d)$.
As interior point methods are by far the most suitable class of algorithms
for the solution of the resulting finite-dimensional optimization problems,
we devote a substantial part of Section \ref{polsec} to the classification
and geometry of polytopes corresponding to interior and boundary points 
of our coordinate spaces.

\medskip

The contents of this paper found first applications in \cite{Ernst}
and in \cite{Harrach:19}, where the infinite time reachable sets of strictly
stable linear control systems and convex source supports of elliptic impedance
tomography problems are computed as solutions to optimization problems 
in Galerkin polytope spaces.
All results provided in this manuscript have explicitly or implicitly been
exploited in these applications.

\section{The space $\mc{G}_A$} \label{polsec}

In this section, we characterize and analyze the space $\mc{G}_A$ 
of all polyhedra with prescribed facet orientations, which are encoded 
in terms of a matrix $A$ containing the corresponding outer normals.
After collecting some preliminaries in Section \ref{prelim1} and introducing
a natural space $\mc{C}_A$ of coordinates in Section \ref{coor1}, we investigate
in Section \ref{boundedsec}, under which circumstances the spaces $\mc{G}_A$
consist of bounded polytopes.
In Section \ref{charsec}, we characterize $\mc{C}_A$ in terms of a system 
of linear inequalities, and after collecting some information on vertices 
of certain polyhedra in Section \ref{verticessec}, we eliminate some 
redundancies in the characterizing system of inequalities in Section
\ref{eliminatesec}.
In Section \ref{intptsec} we discuss the geometry of polyhedra corresponding
to coordinate vectors in the interior and in different parts of the boundary 
of $\mc{C}_A$, and in Section \ref{approxsec}, we quantify how well the space 
$\mc{G}_A$ approximates $\mc{K}_c(\R^d)$.

\subsection{Setting and preliminaries} \label{prelim1}

We denote the unit sphere in $\R^d$ by $S^{d-1}:=\{x\in\R^d:\|x\|_2=1\}$.
Throughout this section, we will assume that $A\in\R^{N\times d}$ is a fixed matrix which consists of pairwise distinct rows $a_1^T,\ldots,a_N^T$ 
satisfying $a_i\in S^{d-1}$ for $i=1,\ldots,N$.
Denoting $\R_+:=\{s\in\R:s\ge 0\}$, for every $b\in\R^N$ and $c\in\R^d$, 
we define
\begin{align*}
Q_{A,b}:=\{x\in\R^d: Ax\le b\},\quad Q_{A,c}^*:=\{p\in\R^N_+:A^Tp=c\},
\end{align*}
and for $x\in\R^d$ and $p\in\R^N_+$, we set
\[I_x^b:=\{i:a_i^Tx=b_i\},\quad I_p^*:=\{i:p_i>0\}.\]
We define a space of polyhedra by setting 
\[\mc{G}_A:=\{Q_{A,b}:b\in\R^N\}\setminus\{\emptyset\}.\]
The vectors $b\in\R^N$ provide a natural system of coordinates on $\mc{G}_A$.
We denote $\mymathbb{0}=(0,\ldots,0)^T\in\R^N$ and 
$\mathbbm{1}=(1,\ldots,1)^T\in\R^N$.
For any closed convex set $C\subset\R^d$, we denote its set of extremal points 
by $\ext(C)$.

\medskip

Let $\mc{K}_c(\R^d)$ denote the space of all nonempty, convex and compact 
subsets of $\R^d$, and consider the Hausdorff semi-distance 
$\dist:\mc{K}_c(\R^d)\times\mc{K}_c(\R^d)\to\R_+$
and the Hausdorff-distance $\dist_H:\mc{K}_c(\R^d)\times\mc{K}_c(\R^d)\to\R_+$ defined by
\begin{align*}
&\dist(C,\tilde C):=\sup_{c\in C}\inf_{\tilde c\in\tilde C}\|c-\tilde c\|_2,\\
&\dist_H(C,\tilde C):=\max\{\dist(C,\tilde C),\dist(\tilde C,C)\}.
\end{align*}
In addition, we introduce the size and the support function 
\begin{align*}
&\|\cdot\|_2:\mc{K}_c(\R^d)\to\R_+,\quad
\|C\|_2:=\sup_{c\in C}\|c\|_2,\\
&\sigma:\mc{K}_c(\R^d)\times\R^d\to\R,\quad
\sigma_C(x):=\sup_{c\in C}x^Tc.
\end{align*}

The following statements are Corollary 4.5 in \cite{Lauritzen}
and the first theorem in Section 2.5 of \cite{Luenberger}.

\begin{lemma}[Vertices of polyhedra] \label{vertices}
Let $b\in\R^N$ and $c\in\R^d$.
\begin{itemize}
\item [a)] 
A point $x\in Q_{A,b}$ satisfies $x\in\ext(Q_{A,b})$ if and only if 
$x\in Q_{A,b}$ and $\rank(\{a_i:i\in I_x^b\})=d$.
\item [b)] A point $p\in Q_{A,c}^*$ satisfies $p\in\ext(Q_{A,c}^*)$ 
if and only if the vectors $\{a_i:i\in I_p^*\}$ are linearly independent.
\end{itemize}
\end{lemma}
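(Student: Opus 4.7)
Both parts follow the same general strategy: characterize extremality by attempting to write $x$ (or $p$) as the midpoint of two distinct points in the feasible set, and show that the algebraic condition on the active constraints is exactly what rules out (or permits) such a decomposition. The only ingredients are elementary linear algebra and the definitions of $Q_{A,b}$ and $Q_{A,c}^*$; the two parts differ in what ``active constraint'' means — pointwise equality in the $\le$-system for part (a), and nonzero component of $p$ in part (b).

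For part (a), I would argue both directions directly. For the ``if'' direction, assume $\rank(\{a_i:i\in I_x^b\})=d$ and suppose $x=\tfrac12(y+z)$ with $y,z\in Q_{A,b}$. For every $i\in I_x^b$ we have $a_i^Ty\le b_i$, $a_i^Tz\le b_i$ and $\tfrac12(a_i^Ty+a_i^Tz)=b_i$, forcing $a_i^T(y-x)=a_i^T(z-x)=0$. Since the $a_i$ with $i\in I_x^b$ span $\R^d$, this gives $y=z=x$, so $x\in\ext(Q_{A,b})$. For the ``only if'' direction, assume $\rank(\{a_i:i\in I_x^b\})<d$, choose $v\neq 0$ with $a_i^Tv=0$ for all $i\in I_x^b$, and exploit that the constraints indexed by $i\notin I_x^b$ have strict slack $b_i-a_i^Tx>0$; for sufficiently small $\eps>0$, both $x\pm\eps v$ lie in $Q_{A,b}$, exhibiting $x$ as a nontrivial midpoint.

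Part (b) is analogous, once one observes that the nonnegativity constraint $p\ge 0$ together with the support set $I_p^*$ plays the role that $I_x^b$ played in (a). For the ``if'' direction, suppose $\{a_i:i\in I_p^*\}$ are linearly independent and $p=\tfrac12(q+r)$ with $q,r\in Q_{A,c}^*$. For $i\notin I_p^*$ we have $p_i=0$ and $q_i,r_i\ge 0$ averaging to $0$, hence $q_i=r_i=0$; therefore $q-r$ is supported on $I_p^*$, and $A^T(q-r)=c-c=0$ gives $\sum_{i\in I_p^*}(q_i-r_i)a_i=0$, forcing $q=r$ by linear independence. For the ``only if'' direction, assume the $a_i$, $i\in I_p^*$, are linearly dependent, pick $\lambda\in\R^N$ nonzero, supported on $I_p^*$, with $A^T\lambda=0$, and note that for small $\eps>0$ the perturbations $p\pm\eps\lambda$ remain in $\R^N_+$ (the positive components of $p$ have slack against $0$) and satisfy $A^T(p\pm\eps\lambda)=c$, contradicting extremality.

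I do not expect a real obstacle here: this is a textbook characterisation of basic feasible/extremal solutions. The only step requiring a moment of care is the choice of $\eps$ in each converse direction, which must be small enough simultaneously for finitely many strict inequalities (the inactive face constraints in (a), the positivity of $p_i$ on $I_p^*$ in (b)); since there are only finitely many constraints to respect, taking the minimum of the corresponding slacks (divided by the relevant $\|a_i^Tv\|$ or $|\lambda_i|$) suffices.
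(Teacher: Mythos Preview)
Your argument is correct in both parts; the midpoint characterisation of extremality suffices for convex sets, and the $\eps$-perturbation step is handled properly since only finitely many strict inequalities need to be preserved.

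The paper does not actually prove this lemma: it simply cites it as Corollary 4.5 in \cite{Lauritzen} and the first theorem in Section 2.5 of \cite{Luenberger}. Your write-up is the standard textbook proof that those references contain, so there is no genuine difference in mathematical content---you have just supplied what the paper outsources. One minor remark: in part (a), ``only if'', you might make explicit that the set $\{i:i\notin I_x^b\}$ may be empty (in which case no $\eps$-condition arises from those indices) and that the rank condition already forces $I_x^b\neq\emptyset$ unless $d=0$; but this is cosmetic and does not affect correctness.
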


The following facts are Propositions 1.7 and 1.9 in \cite{Ziegler:95}.

\begin{proposition}[Farkas' Lemma] \label{Farkas}
Let $b\in\R^N$, $c\in\R^d$ and $\beta\in\R$.
\begin{itemize}
\item[a)] Either $Q_{A,b}\neq\emptyset$, or there exists $p\in Q_{A,0}^*$ with $b^Tp<0$.
\item[b)] Assume that $Q_{A,b}\neq\emptyset$.
Then the inequality $c^Tx\le\beta$ is true for all $x\in Q_{A,b}$ if and only if
there exists $p\in Q_{A,c}^*$ with $p^Tb\le\beta$.
\end{itemize}
\end{proposition}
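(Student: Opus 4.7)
The plan is to base both parts on the classical separation theorem for closed convex sets, applied to suitable finitely generated convex cones in Euclidean space. The trivial halves are one-line duality computations. For (a), if $x\in Q_{A,b}$ and $p\in Q_{A,0}^*$, then $b^Tp\ge(Ax)^Tp=x^T(A^Tp)=0$, so at most one of the alternatives can hold, and the content is to show that at least one always does. For the ``$\Leftarrow$'' direction of (b), given $p\in Q_{A,c}^*$ with $p^Tb\le\beta$ and any $x\in Q_{A,b}$, the same manipulation yields $c^Tx=p^T(Ax)\le p^Tb\le\beta$.

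For the nontrivial direction of (a), I would work with the cone
\[
K:=A\R^d+\R^N_+\subset\R^N,
\]
which, being the Minkowski sum of a subspace and the nonnegative orthant, is a finitely generated convex cone and hence closed. Since $Q_{A,b}\neq\emptyset$ is equivalent to $b\in K$, assuming $Q_{A,b}=\emptyset$ permits strict separation of $b$ from $K$: there exists $p\in\R^N$ with $p^Tb<0$ and $p^Tk\ge 0$ for every $k\in K$. Evaluating the latter on $k=e_i$ forces $p_i\ge 0$, and evaluating it on $k=\pm Ax$ for arbitrary $x\in\R^d$ forces $A^Tp=0$. Hence $p\in Q_{A,0}^*$ with $b^Tp<0$, as required.

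For the nontrivial direction of (b), I would apply the same technique to the closed finitely generated cone
\[
K':=\{(A^Tp,\,b^Tp+t):p\in\R^N_+,\,t\in\R_+\}\subset\R^{d+1},
\]
noting that the desired conclusion is precisely $(c,\beta)\in K'$. Assuming the contrary and separating yields $(y,s)\in\R^{d+1}$ with $y^T(A^Tp)+s(b^Tp+t)\le 0$ for all $p\ge 0$ and $t\ge 0$, together with $y^Tc+s\beta>0$. Varying $t$ alone forces $s\le 0$; writing $s=-u$ with $u\ge 0$ and varying $p$ alone forces $Ay\le ub$. If $u>0$, the point $x:=y/u$ lies in $Q_{A,b}$ and satisfies $c^Tx>\beta$, contradicting the hypothesis. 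If $u=0$, then $Ay\le 0$ and $c^Ty>0$, so starting from any $x_0\in Q_{A,b}$ (which exists by assumption) the ray $x_0+\lambda y$ stays in $Q_{A,b}$ while $c^T(x_0+\lambda y)\to+\infty$, again a contradiction.

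The only genuinely non-routine ingredient in the plan is the closedness of a finitely generated convex cone; this is itself a substantial theorem, but it is standard and is in fact proved in the reference cited for this proposition. Part (b) could alternatively be deduced from part (a) by homogenization---stacking $-c^T$ onto $A$ with a slack variable and applying (a) to the extended system---but the direct separation argument above is cleaner and avoids a limiting step as the slack parameter tends to zero.
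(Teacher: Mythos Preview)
Your argument is correct. Note, however, that the paper does not supply its own proof of this proposition: it is stated as a preliminary fact with the attribution ``Propositions 1.7 and 1.9 in \cite{Ziegler:95}.'' There is therefore no in-paper proof to compare against. Your route---reducing both parts to separation of a point from a closed finitely generated cone, with closedness supplied by the Weyl--Minkowski theorem---is precisely the standard approach and is essentially how the cited reference proceeds; your handling of the degenerate case $u=0$ in part~(b) via the recession direction is also the usual device, and it is exactly here that the hypothesis $Q_{A,b}\neq\emptyset$ is used.
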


Finitely generated convex cones will play a major role in this paper.

\begin{definition}
Given vectors $v_1,\ldots,v_k\in\R^m$, we define the cone generated by these vectors by
$\cone(\{v_1,\ldots,v_k\}):=\{\sum_{j=1}^k\lambda_jv_k:\lambda\in\R^k_+\}$.
\end{definition}

Caratheodory's theorem for cones is taken from 
Theorem 3.14 in \cite{Lauritzen}.

\begin{proposition}[Caratheodory's theorem for cones] \label{Caratheodory}
Let $I\subset\{1,\ldots,N\}$, and let $c\in\cone(\{a_i:i\in I\})$.
Then there exists a subset $J\subset I$ such that $c\in\cone(\{a_i:i\in J\})$
and the vectors $\{a_i:i\in J\}$ are linearly independent.
\end{proposition}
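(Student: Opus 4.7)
The plan is to prove this by iterative elimination: start from any nonnegative representation of $c$ and repeatedly shrink the support until the remaining generators are linearly independent. Write $c=\sum_{i\in I_0}\lambda_i a_i$ with $I_0\subset I$ and $\lambda_i>0$ on $I_0$ (the $c=0$ case is trivial, taking $J=\emptyset$, since then $c\in\cone(\emptyset)=\{0\}$ and independence is vacuous). If $\{a_i:i\in I_0\}$ is already linearly independent, set $J:=I_0$ and stop.

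Otherwise, there exist scalars $\mu_i$, not all zero, with $\sum_{i\in I_0}\mu_i a_i=0$. After possibly replacing $(\mu_i)$ by $(-\mu_i)$, I may assume that $\mu_{i_0}>0$ for some index $i_0\in I_0$. For $t\in\R$ consider the perturbed representation
\[
c=\sum_{i\in I_0}(\lambda_i-t\mu_i)\,a_i,
\]
and choose
\[
t^*:=\min\Bigl\{\tfrac{\lambda_i}{\mu_i}:i\in I_0,\ \mu_i>0\Bigr\}>0.
\]
For indices with $\mu_i>0$ the new coefficient $\lambda_i-t^*\mu_i$ is nonnegative by the choice of $t^*$ and vanishes for at least one index, while for $\mu_i\le 0$ it is even larger than $\lambda_i>0$. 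Thus the new representation of $c$ uses strictly fewer generators, all with nonnegative coefficients, and the active support is a proper subset of $I_0$.

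Since $I$ is finite, this reduction can only be carried out finitely many times; the process must terminate with an active index set $J\subset I_0\subset I$ on which the remaining generators $\{a_j:j\in J\}$ are linearly independent and $c\in\cone(\{a_j:j\in J\})$.

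I do not expect a serious obstacle here; the only bookkeeping point is to verify that $t^*$ is well defined and strictly positive (which uses $\mu_{i_0}>0$ together with $\lambda_{i_0}>0$) and that the coefficients corresponding to $\mu_i\le 0$ remain nonnegative, both of which are immediate from the sign analysis above. The degenerate case $c=0$ is handled separately as noted, so the argument covers all inputs allowed by the hypothesis.
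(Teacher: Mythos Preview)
Your argument is the standard elimination proof of Carath\'eodory's theorem for cones and is correct as written; the bookkeeping for $t^*$ and the sign analysis are handled properly, and the $c=0$ case is disposed of cleanly. The paper itself does not supply a proof of this proposition but only cites it from a textbook (Theorem~3.14 in Lauritzen), so there is no in-paper argument to compare against; your proof is precisely the classical one that such a reference would contain.
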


The following version of the strong duality theorem for linear programming 
can be found, e.g., in \cite[Theorem 4.13]{Joswig:Theobald:13}.

\begin{theorem}[Strong duality for LP] \label{strong:duality}
Let $b\in\R^N$ and $c\in\R^d$. 
For the LPs 
\[\max\{c^Tx: x\in Q_{A,b}\}\quad\text{and}\quad\min\{b^Tp:p\in Q_{A,c}^*\},\]
precisely one of the following alternatives holds:
\begin{itemize}
\item [a)] We have $Q_{A,b}\neq\emptyset$ and $Q_{A,c}^*\neq\emptyset$, and
\[\max\{c^Tx: x\in Q_{A,b}\}=\min\{b^Tp:p\in Q_{A,c}^*\}.\]
\item [b)] We have either $Q_{A,b}=\emptyset$ and $\inf\{b^Tp:p\in Q_{A,c}^*\}=-\infty$
or $Q_{A,c}^*=\emptyset$ and $\sup\{c^Tx: x\in Q_{A,b}\}=\infty$. 
\item [c)] We have $Q_{A,b}=\emptyset$ and $Q_{A,c}^*=\emptyset$.
\end{itemize}
\end{theorem}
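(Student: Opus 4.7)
My plan is to rederive this classical theorem from the two ingredients already assembled in the excerpt: the one-line \emph{weak duality} estimate and Proposition \ref{Farkas} in both of its forms. The three alternatives partition the possibilities according to the feasibility status of $Q_{A,b}$ and $Q^*_{A,c}$, so mutual exclusivity is automatic; what remains is to show that case (a) actually yields finite, equal and attained optima, and that whenever exactly one of the two sides is empty the other is unbounded.

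\textbf{Weak duality and case (a).} For any $x\in Q_{A,b}$ and $p\in Q^*_{A,c}$ one computes $c^Tx=p^TAx\le b^Tp$, so $\sup\{c^Tx:x\in Q_{A,b}\}\le\inf\{b^Tp:p\in Q^*_{A,c}\}$. When both sides are feasible this bound forces $\alpha^*:=\sup\{c^Tx:x\in Q_{A,b}\}$ to be finite, and by definition $c^Tx\le\alpha^*$ for every $x\in Q_{A,b}$. Proposition \ref{Farkas}(b) then supplies $p^*\in Q^*_{A,c}$ with $b^Tp^*\le\alpha^*$, while weak duality gives the reverse inequality; hence the dual infimum is attained at $p^*$ with value $\alpha^*$. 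For primal attainment I would apply Proposition \ref{Farkas}(a) to the augmented system $\{Ax\le b,\ -c^Tx\le-\alpha^*\}$: if it were infeasible, there would exist $\tilde q\in\R^N_+$ and $\lambda\ge 0$ with $A^T\tilde q=\lambda c$ and $b^T\tilde q<\lambda\alpha^*$. The case $\lambda>0$ produces the dual feasible vector $\tilde q/\lambda$ with value strictly below $\alpha^*$, contradicting what was just proved; the case $\lambda=0$ yields $A^T\tilde q=0$ and the impossible chain $0=\tilde q^TAx_0\le\tilde q^Tb<0$ for any $x_0\in Q_{A,b}$. So the augmented system is solvable and delivers $x^*$ with $c^Tx^*=\alpha^*$.

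\textbf{Mixed cases.} Suppose $Q_{A,b}\neq\emptyset$ but $Q^*_{A,c}=\emptyset$. Unfolding $A^Tp=c,\ p\ge 0$ into a system of inequalities and invoking Proposition \ref{Farkas}(a) yields a direction $y\in\R^d$ with $Ay\le 0$ and $c^Ty>0$; then $x_0+ty\in Q_{A,b}$ for every $t\ge 0$ and $c^T(x_0+ty)\to\infty$, so the primal is unbounded. Symmetrically, if $Q^*_{A,c}\neq\emptyset$ but $Q_{A,b}=\emptyset$, Proposition \ref{Farkas}(a) supplies $p\in Q^*_{A,0}$ with $b^Tp<0$, and adding $tp$ to any dual feasible vector drives the dual objective to $-\infty$. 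The remaining possibility, both sides empty, is case (c).

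\textbf{Main obstacle.} The genuinely substantive step is obtaining the primal optimum as a \emph{maximum} rather than merely a supremum; this is what forces the second Farkas application to the augmented system in case (a), whereas the rest of the argument is bookkeeping. A minor technical nuisance is that the augmented matrix contains a row $-c^T$ outside $S^{d-1}$, but Farkas' Lemma is insensitive to the row normalization adopted elsewhere in the paper and applies to the augmented system verbatim.
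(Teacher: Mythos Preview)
The paper does not prove Theorem~\ref{strong:duality}; it is quoted from \cite[Theorem~4.13]{Joswig:Theobald:13} without argument. Your derivation is therefore not comparable to anything in the paper, but it is correct and entirely self-contained from Proposition~\ref{Farkas}: weak duality gives finiteness of $\alpha^*$ in case~(a), Farkas~(b) produces a dual optimizer, the augmented-system trick with Farkas~(a) produces a primal optimizer, and the two mixed cases are dispatched by exhibiting recession directions. Your closing remark about row normalization is also to the point---Proposition~\ref{Farkas} is stated for the fixed matrix $A$ with unit rows, but the underlying Propositions~1.7 and~1.9 in \cite{Ziegler:95} carry no such restriction, so the application to the augmented system is legitimate.
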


Hoffman's error bound was first given in the paper \cite{Hoffman:52}.

\begin{theorem}[Hoffman's error bound]\label{Hoffman}
There exists a constant $L_A\ge 0$ such that for all $x\in\R^d$ and all 
$b\in\R^N$ with $Q_{A,b}\neq\emptyset$, we have
\[\dist(x,Q_{A,b})\le L_A\|\max\{0,Ax-b\}\|_\infty,\]
where the maximum is to be interpreted component-wise.
\end{theorem}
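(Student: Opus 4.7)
The plan is to parametrise $\dist(x,Q_{A,b})$ by the Euclidean projection and reduce the normal-cone condition at that projection to a situation whose data, up to finitely many combinatorial choices, depends only on $A$. If $x\in Q_{A,b}$ both sides vanish, so assume $x\notin Q_{A,b}$ and let $y$ be the Euclidean projection of $x$ onto $Q_{A,b}$. The first-order optimality condition at $y$ (equivalently, KKT for $\min\|x-z\|_2^2$ subject to $Az\le b$; no constraint qualification is needed because the constraints are linear) gives $x-y\in\cone(\{a_i:i\in I_y^b\})$.

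Next I would apply Caratheodory for cones (Proposition \ref{Caratheodory}) to this representation, obtaining a subset $J\subseteq I_y^b$ with $\{a_j:j\in J\}$ linearly independent and $x-y=A_J^T\mu$ for some $\mu\in\R^{|J|}$ with $\mu\ge 0$, where $A_J$ denotes the row-submatrix of $A$ indexed by $J$ and $b_J$ the corresponding subvector. Since $J\subseteq I_y^b$ we have $A_Jy=b_J$, and hence $A_JA_J^T\mu=A_Jx-b_J$, from which
\[
\|x-y\|_2^2=\mu^TA_JA_J^T\mu=\mu^T(A_Jx-b_J)=\sum_{j\in J}\mu_j(a_j^Tx-b_j).
\]

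The decisive step, which I expect to be the main obstacle, is to turn $A_Jx-b_J$ into its positive part. Because $\mu_j\ge 0$, each summand above is bounded by $\mu_j\max\{0,a_j^Tx-b_j\}\le\mu_j\|\max\{0,Ax-b\}\|_\infty$, so
\[
\|x-y\|_2^2\le\|\mu\|_1\,\|\max\{0,Ax-b\}\|_\infty.
\]
A naive estimate via Cauchy--Schwarz routed through $\|A_Jx-b_J\|_\infty$ would only yield a bound against $\|Ax-b\|_\infty$, which is strictly weaker than the statement: even for a positive-definite Gram matrix $A_JA_J^T$ and $\mu\ge 0$, the vector $A_JA_J^T\mu$ can have a negative coordinate of strictly larger absolute value than any nonnegative coordinate. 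The componentwise use of the sign of $\mu$ that Caratheodory supplies is precisely what lets the positive part appear.

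To finish, linear independence of $\{a_j:j\in J\}$ makes the map $A_J^T:\R^{|J|}\to\R^d$ injective, so there is a constant $c_J:=1/\sqrt{\lambda_{\min}(A_JA_J^T)}>0$ depending only on $A_J$ with $\|\nu\|_2\le c_J\|A_J^T\nu\|_2$ for every $\nu\in\R^{|J|}$. Taking $\nu=\mu$ and combining with $\|\mu\|_1\le\sqrt{|J|}\|\mu\|_2$ and $A_J^T\mu=x-y$ yields $\|\mu\|_1\le\sqrt{|J|}\,c_J\|x-y\|_2$; inserting this into the displayed bound and dividing by $\|x-y\|_2>0$ gives $\|x-y\|_2\le\sqrt{|J|}\,c_J\,\|\max\{0,Ax-b\}\|_\infty$. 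Since only finitely many subsets $J\subseteq\{1,\ldots,N\}$ produce a linearly independent family $\{a_j:j\in J\}$, setting $L_A:=\max_J\sqrt{|J|}\,c_J$ gives a constant depending only on $A$ and establishes the bound uniformly in $x$ and $b$.
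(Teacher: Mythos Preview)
The paper does not prove Theorem~\ref{Hoffman}; it merely states Hoffman's error bound and attributes it to the original reference~\cite{Hoffman:52}. There is therefore no proof in the paper to compare against.

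Your argument is correct and is in fact one of the standard proofs of Hoffman's lemma. The key steps---projecting onto $Q_{A,b}$, using the KKT normal-cone condition, passing to a linearly independent subfamily via Carath\'eodory, and exploiting $\mu\ge 0$ to replace $A_Jx-b_J$ by its positive part---are all sound, and the uniformity in $(x,b)$ follows exactly as you say from the finiteness of the index sets $J$. One minor remark: you might note explicitly that $\dist(x,Q_{A,b})$ in the paper's notation is the Hausdorff semi-distance of the singleton $\{x\}$ to $Q_{A,b}$, which equals $\|x-y\|_2$, so your conclusion indeed matches the stated inequality.
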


The following facts are proved in Lemmas 1.8.12 and 1.8.14 in \cite{Schneider:14}.

\begin{lemma} \label{support:Lipschitz}
For all $x,\tilde x\in\R^d$ and $C,\tilde{C}\in\mc{K}_c(\R^d)$,
we have
\begin{align*}
|\sigma_C(x)-\sigma_{\tilde{C}}(\tilde x)|
\le&\max\{\|C\|_2,\|\tilde{C}\|_2\}\|x-\tilde x\|_2\\
&+\max\{\|x\|_2,\|\tilde{x}\|_2\}\dist_H(C,\tilde{C}),
\end{align*}
as well as
\[\dist_H(C,\tilde C)=\sup_{x\in S^{d-1}}|\sigma_C(x)-\sigma_{\tilde C}(x)|.\]
\end{lemma}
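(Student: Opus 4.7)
The plan for part (a) is a triangle-inequality split through the intermediate value $\sigma_C(\tilde x)$, namely
\[
|\sigma_C(x) - \sigma_{\tilde C}(\tilde x)| \le |\sigma_C(x) - \sigma_C(\tilde x)| + |\sigma_C(\tilde x) - \sigma_{\tilde C}(\tilde x)|,
\]
followed by two elementary estimates. For the first term, sublinearity of the support function combined with Cauchy--Schwarz yields $\sigma_C(x)-\sigma_C(\tilde x)\le\sigma_C(x-\tilde x)\le\|C\|_2\|x-\tilde x\|_2$, and swapping $x$ and $\tilde x$ handles the absolute value. For the second term, compactness of $\tilde C$ together with the definition of $\dist$ produces, for every $c\in C$, some $\tilde c\in\tilde C$ with $\|c-\tilde c\|_2\le\dist_H(C,\tilde C)$, so that $c^T\tilde x\le\tilde c^T\tilde x+\dist_H(C,\tilde C)\|\tilde x\|_2\le\sigma_{\tilde C}(\tilde x)+\dist_H(C,\tilde C)\|\tilde x\|_2$; swapping $C$ and $\tilde C$ closes this estimate. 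Replacing $\|C\|_2$ by $\max\{\|C\|_2,\|\tilde C\|_2\}$ and $\|\tilde x\|_2$ by $\max\{\|x\|_2,\|\tilde x\|_2\}$ delivers the form stated in the lemma.

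For part (b), the plan is to translate the Hausdorff semi-distance into a statement about support functions. Because $\tilde C$ is compact, $\dist(C,\tilde C)\le r$ is equivalent to $C\subset\tilde C+r\bar B$, where $\bar B$ denotes the closed Euclidean unit ball. Using the two standard facts that $D\subset E\Leftrightarrow\sigma_D\le\sigma_E$ for closed convex sets $D,E$ and that $\sigma_{\tilde C+r\bar B}(x)=\sigma_{\tilde C}(x)+r\|x\|_2$ (support functions are additive under Minkowski sums), one obtains
\[
\dist(C,\tilde C)=\sup_{x\in\R^d\setminus\{0\}}\max\!\left\{0,\tfrac{\sigma_C(x)-\sigma_{\tilde C}(x)}{\|x\|_2}\right\}=\sup_{x\in S^{d-1}}\max\{0,\sigma_C(x)-\sigma_{\tilde C}(x)\}
\]
by positive homogeneity. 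Combining this identity with its analogue obtained by exchanging $C$ and $\tilde C$, and using $|u-v|=\max\{\max\{0,u-v\},\max\{0,v-u\}\}$, gives the claim.

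The only genuinely nontrivial ingredient is the equivalence $D\subset E\Leftrightarrow\sigma_D\le\sigma_E$ for closed convex sets, which is where convexity enters through a separating-hyperplane argument: if $d\in D\setminus E$, a strict separation of $d$ from $E$ produces a direction violating $\sigma_D\le\sigma_E$. Everything else is algebraic manipulation of suprema plus compactness to ensure attainment of the infima in the definition of $\dist$, so both parts of the lemma follow in a few lines once that equivalence is in hand.
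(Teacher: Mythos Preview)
The paper does not actually prove this lemma; it simply cites Lemmas 1.8.12 and 1.8.14 of Schneider's \emph{Convex Bodies}, so there is no in-paper argument to compare against. Your proof is correct and is essentially the standard one: the triangle-inequality split through $\sigma_C(\tilde x)$ together with sublinearity and Cauchy--Schwarz handles the Lipschitz estimate, and the Hausdorff identity is exactly the $C\subset\tilde C+r\bar B \Leftrightarrow \sigma_C\le\sigma_{\tilde C}+r\|\cdot\|_2$ characterization, which rests on the separation theorem via $D\subset E\Leftrightarrow\sigma_D\le\sigma_E$. One minor remark: your split in fact yields the slightly sharper bound $\|C\|_2\,\|x-\tilde x\|_2+\|\tilde x\|_2\,\dist_H(C,\tilde C)$; passing to the symmetric form with the maxima is a harmless weakening, as you note.
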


\subsection{Coordinates on $\mc{G}_A$} \label{coor1}

In this section, we check that the mapping 
\[\phi:\mc{C}_A\to\mc{G}_A,\quad \phi(b):=Q_{A,b},\] 
is a bijection between the coordinate space
\[\mc{C}_A:=\{b\in\R^N:\forall\,i\in\{1,\ldots,N\}\ \exists x_i\in Q_{A,b}\ \text{with}\ a_i^Tx_i = b_i\}\]
and the space of polytopes $\mc{G}_A$.
It is immediate that $Q_{A,b}\neq\emptyset$ whenever $b\in\mc{C}_A$.
The following lemmas state that vectors in $\mc{C}_A$ are, in a sense, 
minimal descriptions of polyhedra in $\mc{G}_A$.

\begin{lemma} \label{minimal:lemma}
If $b\in\mc{C}_A$ and $\tilde b\in\R^N$ with $Q_{A,b}=Q_{A,\tilde b}$, 
then $b\le\tilde b$.
\end{lemma}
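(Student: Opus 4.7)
The plan is to unpack the definition of $\mc{C}_A$ componentwise. Fix an arbitrary index $i\in\{1,\ldots,N\}$; the goal is to show $b_i\le\tilde b_i$. Since $b\in\mc{C}_A$, the defining property of the coordinate space supplies a point $x_i\in Q_{A,b}$ with $a_i^Tx_i=b_i$. This is the crucial witness: it forces the coordinate $b_i$ to be attained by an actual element of the polyhedron, not merely by the algebraic description.

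Next I would invoke the hypothesis $Q_{A,b}=Q_{A,\tilde b}$ to transport $x_i$ into the second polyhedron, which immediately yields $a_j^Tx_i\le\tilde b_j$ for every $j\in\{1,\ldots,N\}$. Reading off the single inequality corresponding to $j=i$ gives $b_i=a_i^Tx_i\le\tilde b_i$. Since $i$ was arbitrary, the componentwise inequality $b\le\tilde b$ follows.

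No auxiliary results from Section \ref{prelim1} are needed here, which in turn is precisely what motivates the otherwise somewhat mysterious definition of $\mc{C}_A$: requiring each facet inequality to be tight for some point of $Q_{A,b}$ is exactly the condition that rules out describing the same polyhedron with any strictly smaller right-hand side. Consequently, there is no real obstacle in the argument; the only thing to watch is that the interpretation of the quantifiers in the definition of $\mc{C}_A$ is strong enough (a witness $x_i$ per index, rather than, say, existence in some weak duality sense), which is indeed the case.
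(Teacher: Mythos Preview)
Your proof is correct and matches the paper's argument essentially line for line: fix $i$, use the definition of $\mc{C}_A$ to obtain $x_i\in Q_{A,b}$ with $a_i^Tx_i=b_i$, transport $x_i$ to $Q_{A,\tilde b}$ via the hypothesis, and read off $b_i\le\tilde b_i$. The additional commentary you provide on why the definition of $\mc{C}_A$ is set up this way is accurate but not part of the paper's proof.
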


\begin{proof}
Let $i\in\{1,\ldots,N\}$.
By definition of $\mc{C}_A$, there exists $x_i\in Q_{A,b}$ with $b_i=a_i^Tx_i$,
and since $Q_{A,b}=Q_{A,\tilde{b}}$, we have $a_i^Tx_i\le\tilde b_i$.
\end{proof}

In the next proof, we use Farkas' lemma instead of compactness 
arguments, because we are currently not excluding unbounded polyhedra $Q_{A,b}$.

\begin{lemma} \label{construct:minimal:lemma}
Let $\tilde b\in\R^N$ with $Q_{A,\tilde b}\neq\emptyset$, and define
\begin{equation} \label{loc}
b_i:=\sup\{a_i^Tx:x\in Q_{A,\tilde b}\},\quad i=1,\ldots,N.
\end{equation}
Then $b\in\mc{C}_A$, and we have $b\le\tilde b$ and $Q_{A,b}=Q_{A,\tilde b}$.
\end{lemma}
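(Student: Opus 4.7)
The plan is to establish the three claims in the order in which they become available: the inequality $b\le\tilde b$ and finiteness of $b$, then the identity $Q_{A,b}=Q_{A,\tilde b}$, and finally the attainment statement that yields $b\in\mc{C}_A$.

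For the first part, I would fix $i\in\{1,\ldots,N\}$ and observe that every $x\in Q_{A,\tilde b}$ satisfies $a_i^Tx\le\tilde b_i$ by definition of $Q_{A,\tilde b}$. Taking the supremum over $x\in Q_{A,\tilde b}$ in \eqref{loc} gives $b_i\le\tilde b_i$; in particular $b_i<\infty$, so $b\in\R^N$ is well defined and $b\le\tilde b$. For the identity of the polyhedra, one inclusion is immediate from $b\le\tilde b$: if $Ax\le b$, then $Ax\le\tilde b$, so $Q_{A,b}\subseteq Q_{A,\tilde b}$. The reverse inclusion follows directly from \eqref{loc}: for $x\in Q_{A,\tilde b}$ and each $i$, the value $a_i^Tx$ is bounded above by the supremum $b_i$, hence $Ax\le b$, i.e.\ $x\in Q_{A,b}$.

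The crux of the proof is showing $b\in\mc{C}_A$, which requires that the supremum in \eqref{loc} is actually attained inside $Q_{A,\tilde b}$. This is where, as the remark before the lemma points out, one cannot appeal to compactness because $Q_{A,\tilde b}$ may be unbounded. I would handle it via linear programming duality. Consider the primal LP $\max\{a_i^Tx:x\in Q_{A,\tilde b}\}$, whose optimal value by definition equals $b_i$. Since $Q_{A,\tilde b}\neq\emptyset$, the primal is feasible, so alternative (c) of Theorem \ref{strong:duality} is ruled out; since the objective is bounded above by $\tilde b_i<\infty$, the unbounded subcase of alternative (b) is ruled out as well, and the remaining subcase of (b) contradicts feasibility of the primal. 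Hence alternative (a) applies: both optima are attained and coincide. In particular, the primal maximum is attained at some $x_i\in Q_{A,\tilde b}$ with $a_i^Tx_i=b_i$, and because $Q_{A,\tilde b}=Q_{A,b}$ by the previous step, this $x_i$ witnesses the defining property of $\mc{C}_A$ at index $i$.

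The main obstacle is precisely the attainment of the supremum, and the key insight is that strong duality (which is essentially Farkas' lemma in disguise) replaces the compactness argument that would ordinarily close this gap. Once attainment is secured componentwise, the three conclusions $b\in\mc{C}_A$, $b\le\tilde b$, and $Q_{A,b}=Q_{A,\tilde b}$ follow without further work.
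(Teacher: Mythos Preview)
Your proof is correct. The first two parts (the inequality $b\le\tilde b$, well-definedness, and $Q_{A,b}=Q_{A,\tilde b}$) match the paper's argument verbatim. For the attainment step, however, you take a different route: you invoke the strong duality Theorem~\ref{strong:duality} directly, arguing that primal feasibility and boundedness of the objective rule out alternatives (b) and (c), so alternative (a) forces the maximum $\max\{a_i^Tx:x\in Q_{A,\tilde b}\}=b_i$ to be attained. The paper instead works with Farkas' lemma (Proposition~\ref{Farkas}) in two stages: it first deduces $p^Tb\ge b_i$ for all $p\in Q_{A,a_i}^*$ from part~(b), and then applies part~(a) to the augmented system $Ax\le b$, $-a_i^Tx\le -b_i$ to exhibit a point $x_i$ with $a_i^Tx_i=b_i$. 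Your appeal to strong duality is shorter and conceptually cleaner; the paper's Farkas-based argument is more self-contained in that it does not rely on the attainment clause built into Theorem~\ref{strong:duality}, and it also yields as a byproduct the inequality $p^Tb\ge b_i$ for all $p\in Q_{A,a_i}^*$, which foreshadows condition~\eqref{pre:touching} in Theorem~\ref{characterisation}.
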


\begin{proof}
Because of $Q_{A,\tilde b}\neq\emptyset$ and by statement \eqref{loc}, 
we have $-\infty<b_i\le\tilde b_i$ for every $i\in\{1,\ldots,N\}$, 
and hence $b\in\R^N$ and $Q_{A,b}\subset Q_{A,\tilde b}$.
On the other hand, for every $x\in Q_{A,\tilde b}$ and every $i\in\{1,\ldots,N\}$, 
statement \eqref{loc} yields $a_i^Tx\le b_i$, so $Q_{A,\tilde b}\subset Q_{A,b}$.
All in all, we have $Q_{A,b}=Q_{A,\tilde b}\neq\emptyset$.

Now fix $i\in\{1,\ldots,N\}$, and let us check that there exists 
$x_i\in Q_{A,b}$ with $a_i^Tx_i=b_i$.
According to statement \eqref{loc}, for every $\eps>0$, there exists
$x_\eps\in Q_{A,b}$ with $a_i^Tx_\eps>b_i-\eps$.
By Proposition \ref{Farkas}b, this implies that 
\begin{equation} \label{loca}
p^Tb\ge b_i\quad\forall\,p\in Q_{A,a_i}^*.
\end{equation}
Now consider $(p,s)\in\R^N_+\times\R_+$ with $A^Tp-sa_i=0$. 
If $s>0$, then we have $s^{-1}p\in Q_{A,a_i}^*$, so by \eqref{loca}, 
we obtain $b^Tp-b_is\ge 0$.
If $s=0$, then $A^Tp=0$, and since $Q_{A,b}\neq\emptyset$, Proposition \ref{Farkas}a 
yields $b^Tp\ge 0$.
Thus, in both cases we find
\[(b^T,-b_i)\begin{pmatrix}p\\s\end{pmatrix}\ge 0,\]
and according to Proposition \ref{Farkas}a, this implies
\[\{x\in\R^d:Ax\le b,\,-a_i^Tx\le-b_i\}\neq\emptyset,\]
so there exists $x_i\in Q_{A,b}$ with $a_i^Tx_i=b_i$.
\end{proof}

Now we establish the converse of Lemma \ref{minimal:lemma}.

\begin{lemma} \label{minimal:then}
If $b\in\R^N$ and $b\le\tilde b$ for all $\tilde b\in\R^N$ with $Q_{A,b}=Q_{A,\tilde b}$,
then we have $b\in\mc{C}_A$.
\end{lemma}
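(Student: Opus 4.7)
The plan is to deduce $b\in\mc{C}_A$ by combining the construction in Lemma \ref{construct:minimal:lemma} with the hypothesized minimality of $b$. The strategy is extremely short: produce a coordinate vector $b'\in\mc{C}_A$ that describes the same polyhedron as $b$ and lies componentwise below $b$, then invoke the hypothesis to deduce $b\le b'$, so $b=b'\in\mc{C}_A$.

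First I would dispose of the degenerate case $Q_{A,b}=\emptyset$, since Lemma \ref{construct:minimal:lemma} requires nonemptiness. If $Q_{A,b}=\emptyset$, then $\tilde b:=b-\mathbbm{1}$ satisfies $Q_{A,\tilde b}\subset Q_{A,b}=\emptyset$, so $Q_{A,\tilde b}=Q_{A,b}$, but $b>\tilde b$ componentwise, contradicting the hypothesis. Hence $Q_{A,b}\neq\emptyset$.

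Now I would apply Lemma \ref{construct:minimal:lemma} with $\tilde b$ replaced by $b$ to obtain $b'\in\mc{C}_A$ with $b'\le b$ and $Q_{A,b'}=Q_{A,b}$. Since $Q_{A,b'}=Q_{A,b}$, the hypothesis on $b$ forces $b\le b'$. Combining the two inequalities yields $b=b'\in\mc{C}_A$, which is the claim.

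I do not anticipate any technical obstacle here: the whole point of Lemma \ref{construct:minimal:lemma} is that from any nonempty $Q_{A,b}$ one can read off a minimal coordinate representative via the componentwise support values, and the present lemma just repackages that observation as a characterization. The only care needed is the brief contradiction argument ruling out $Q_{A,b}=\emptyset$, since the definition of $\mc{C}_A$ implicitly requires $Q_{A,b}\neq\emptyset$.
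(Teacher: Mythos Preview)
Your proposal is correct and follows essentially the same argument as the paper's own proof: rule out $Q_{A,b}=\emptyset$ via $\tilde b=b-\mathbbm{1}$, then apply Lemma~\ref{construct:minimal:lemma} to obtain a $\mc{C}_A$-representative $b'\le b$ with $Q_{A,b'}=Q_{A,b}$ and conclude $b=b'$ from the minimality hypothesis.
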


\begin{proof}
If $Q_{A,b}=\emptyset$, then $\tilde b:=b-\mathbbm{1}<b$ and 
$Q_{A,b-\mathbbm{1}}=\emptyset=Q_{A,b}$, which contradicts the assumption.
Hence $Q_{A,b}\neq\emptyset$, and by Lemma \ref{construct:minimal:lemma}, 
there exists some $\tilde b\in\mc{C}_A$ with $\tilde b\le b$ and $Q_{A,b}=Q_{A,\tilde b}$.
By assumption, we have $b\le\tilde b$, so $b=\tilde b$ and $b\in\mc{C}_A$.
\end{proof}

Let us sum up the preceding discussion.

\begin{proposition} \label{summary}
For $b\in\R^N$, we have $b\in\mc{C}_A$ if and only if $b\le\tilde b$ 
for all $\tilde b\in\R^N$ with $Q_{A,b}=Q_{A,\tilde b}$.
\end{proposition}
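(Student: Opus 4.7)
The plan is simply to observe that the proposition is a direct consolidation of Lemmas \ref{minimal:lemma} and \ref{minimal:then}, which together give the two implications of the biconditional. No new argument is needed.

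For the forward implication ($\Rightarrow$), I would suppose $b\in\mc{C}_A$ and pick any $\tilde b\in\R^N$ with $Q_{A,b}=Q_{A,\tilde b}$. Then Lemma \ref{minimal:lemma} applies verbatim and delivers $b\le\tilde b$.

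For the reverse implication ($\Leftarrow$), I would suppose $b\in\R^N$ has the property that $b\le\tilde b$ for every $\tilde b\in\R^N$ with $Q_{A,b}=Q_{A,\tilde b}$. This is exactly the hypothesis of Lemma \ref{minimal:then}, which yields $b\in\mc{C}_A$.

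The only thing to watch for is that both lemmas have already disposed of the degenerate case $Q_{A,b}=\emptyset$ inside their proofs (Lemma \ref{minimal:then} uses the test vector $b-\mathbbm{1}$ to rule it out), so no separate case analysis is required here. I do not anticipate any obstacle, as this proposition is purely a restatement for later reference; it would suffice to write a one-sentence proof pointing to Lemmas \ref{minimal:lemma} and \ref{minimal:then}.
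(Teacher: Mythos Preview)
Your proposal is correct and matches the paper's own proof, which simply reads ``Combine Lemmas \ref{minimal:lemma} and \ref{minimal:then}.'' Your added remarks about the degenerate case $Q_{A,b}=\emptyset$ are accurate but unnecessary here, since that case is indeed handled inside Lemma \ref{minimal:then}.
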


\begin{proof}
Combine Lemmas \ref{minimal:lemma} and \ref{minimal:then}.
\end{proof}

Finally, we conclude that $\phi$ is a nice parametrization of $\mc{G}_A$.

\begin{theorem} \label{phi}
The mapping $\phi:\mc{C}_A\to\mc{G}_A$, $\phi(b)=Q_{A,b}$, is a 
homeomorphism between $(\mc{C}_A,\|\cdot\|_\infty)$ and $(\mc{G}_A,\dist_H)$.
The mapping $\phi$ is $L_A$-Lipschitz, and its inverse $\phi^{-1}$ is $1$-Lipschitz.
\end{theorem}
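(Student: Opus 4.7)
The plan is to split the claim into bijectivity and the two Lipschitz estimates, from which the homeomorphism property follows automatically.

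For the bijectivity of $\phi$, everything has essentially been prepared in the preceding lemmas. If $b\in\mc{C}_A$, then $Q_{A,b}\neq\emptyset$ so $\phi$ maps into $\mc{G}_A$. Given any $Q_{A,\tilde b}\in\mc{G}_A$, Lemma \ref{construct:minimal:lemma} produces some $b\in\mc{C}_A$ with $Q_{A,b}=Q_{A,\tilde b}$, which gives surjectivity. Injectivity follows from Lemma \ref{minimal:lemma} applied twice: if $b,\tilde b\in\mc{C}_A$ with $Q_{A,b}=Q_{A,\tilde b}$, then $b\le\tilde b$ and $\tilde b\le b$.

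For $\phi$ being $L_A$-Lipschitz, the natural tool is Hoffman's error bound. Given $b,\tilde b\in\mc{C}_A$ and any $x\in Q_{A,b}$, one has $Ax\le b$, hence
\[\|\max\{0,Ax-\tilde b\}\|_\infty \le \|b-\tilde b\|_\infty,\]
so Theorem \ref{Hoffman} yields $\dist(x,Q_{A,\tilde b})\le L_A\|b-\tilde b\|_\infty$ uniformly in $x\in Q_{A,b}$. Taking the supremum and then exchanging the roles of $b$ and $\tilde b$ gives $\dist_H(Q_{A,b},Q_{A,\tilde b})\le L_A\|b-\tilde b\|_\infty$. This argument has the virtue of working without any boundedness assumption on the polyhedra, which is important since the space $\mc{G}_A$ is not yet restricted to polytopes at this point of the paper.

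For $\phi^{-1}$ being $1$-Lipschitz, the key is that the defining property of $\mc{C}_A$ supplies, for each coordinate $i$, a witness point where the $i$-th constraint is active. Fix $i\in\{1,\ldots,N\}$, pick $x_i\in Q_{A,b}$ with $a_i^Tx_i=b_i$ (available because $b\in\mc{C}_A$), and let $y_i\in Q_{A,\tilde b}$ realize the distance from $x_i$ to the closed set $Q_{A,\tilde b}$. Then $a_i^Ty_i\le\tilde b_i$ and $\|a_i\|_2=1$ give
\[b_i-\tilde b_i \le a_i^T(x_i-y_i) \le \|x_i-y_i\|_2 \le \dist(Q_{A,b},Q_{A,\tilde b}) \le \dist_H(Q_{A,b},Q_{A,\tilde b}),\]
and the symmetric estimate (using that $\tilde b\in\mc{C}_A$) yields the opposite inequality, hence $\|b-\tilde b\|_\infty\le\dist_H(Q_{A,b},Q_{A,\tilde b})$.

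Since a Lipschitz bijection with Lipschitz inverse is a homeomorphism, this concludes the proof. I do not expect a genuine obstacle: the only subtle point is avoiding the support-function Lipschitz estimate in Lemma \ref{support:Lipschitz}, which requires boundedness, by instead appealing to Hoffman's bound for one direction and using the active-constraint witness built into the definition of $\mc{C}_A$ for the other.
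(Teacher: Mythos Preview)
Your proof is correct and follows essentially the same approach as the paper's: bijectivity via Lemma~\ref{construct:minimal:lemma} and Lemma~\ref{minimal:lemma} (the paper cites the equivalent Proposition~\ref{summary}), the $L_A$-Lipschitz estimate via Hoffman's bound, and the $1$-Lipschitz estimate for $\phi^{-1}$ via the active-constraint witness guaranteed by the definition of $\mc{C}_A$. Your write-up simply spells out the Hoffman step in more detail than the paper, which only cites Theorem~\ref{Hoffman}; your remark about avoiding boundedness assumptions is apt but not a methodological departure.
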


\begin{proof}
By definition of $\mc{C}_A$, it is clear that $\phi(\mc{C}_A)\subset\mc{G}_A$.

\medskip

Let $\tilde b\in\R^N$ with $Q_{A,\tilde b}\in\mc{G}_A$.
By Lemma \ref{construct:minimal:lemma}, there exists $b\in\mc{C}_A$ such that 
$Q_{A,b}=Q_{A,\tilde b}$, so $\phi$ is surjective.
Assume that there exist $b,\tilde b\in\mc{C}_A$ with 
$Q_{A,b}=\phi(b)=\phi(\tilde b)=Q_{A,\tilde b}$.
By Proposition \ref{summary}, we have $b\le\tilde b$ and $\tilde b\le b$, 
so $b=\tilde b$. 
Hence $\phi$ is injective.

\medskip

The Lipschitz property of $\phi$ is a consequence of Hoffman's error bound, 
see Theorem \ref{Hoffman}.
To check the Lipschitz property of the inverse, 
let $b,\tilde b\in\mc{C}_A$ and fix $i\in\{1,\ldots,N\}$. 
By definition of $\mc{C}_A$, there exists $x\in\phi(b)=Q_{A,b}$ with $a_i^Tx=b_i$, 
and there exists $\tilde x\in\phi(\tilde b)=Q_{A,\tilde b}$ with
$\|x-\tilde x\|_2\le\dist(Q_{A,b},Q_{A,\tilde b})$.
But then 
\[b_i-\tilde b_i\le a_i^T(x-\tilde x)\le\|x-\tilde x\|_2
\le\dist(Q_{A,b},Q_{A,\tilde b}).\]
By symmetry, and since the above argument holds for any $i$, we obtain
\[\|b-\tilde b\|_\infty\le\dist(Q_{A,b},Q_{A,\tilde b}).\] 
\end{proof}

\subsection{Spaces of polytopes or unbounded polyhedra} \label{boundedsec}

The recession cone of a convex set describes its behavior at infinity.

\begin{definition}
The recession cone of a closed convex set $C\subset\R^d$ is the set
\[\rec(C):=\{c\in\R^d:x+\lambda c\in C\ \forall\,\lambda\ge 0,\ \forall\,x\in C\}.\]
\end{definition}

We use this notion to prove a theorem of the alternative for the space $\mc{G}_A$.

\begin{theorem} \label{alternative}
Either $\mc{G}_A$ is a collection of unbounded polyhedra, 
or it is a collection of bounded polytopes.
The latter alternative is equivalent with the statement $Q_{A,0}=\{0\}$,
and it is also equivalent with the condition
\[Q_{A,c}^*\neq\emptyset\quad\forall\,c\in\R^d.\]
\end{theorem}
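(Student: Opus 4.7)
The plan is to organize the proof around one auxiliary lemma on recession cones and one application of strong LP duality.

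First, I would prove the key auxiliary fact that $\rec(Q_{A,b})=Q_{A,0}$ whenever $Q_{A,b}\in\mc{G}_A$. The inclusion $Q_{A,0}\subset\rec(Q_{A,b})$ is immediate from the definitions: if $Ac\le 0$ and $Ax\le b$, then $A(x+\lambda c)\le b$ for all $\lambda\ge 0$. For the reverse inclusion, fix any $x\in Q_{A,b}$ and $c\in\rec(Q_{A,b})$; then $\lambda Ac\le b-Ax$ for all $\lambda>0$, and dividing by $\lambda$ and letting $\lambda\to\infty$ yields $Ac\le 0$. Since a nonempty closed convex subset of $\R^d$ is bounded if and only if its recession cone is $\{0\}$ (a standard fact that I would either cite or justify by noting that a non-zero element of the recession cone generates a full ray inside the set), this observation already delivers both the dichotomy, since the condition $Q_{A,0}=\{0\}$ does not depend on $b$, and the first equivalence.

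For the equivalence with the condition $Q_{A,c}^*\neq\emptyset$ for every $c\in\R^d$, one direction is essentially a one-liner: given $x\in Q_{A,0}$, apply the hypothesis with $c:=x$ to obtain $p\in Q_{A,x}^*$, and compute
\[\|x\|_2^2 = x^T A^T p = (Ax)^T p \le 0,\]
where the last inequality uses $p\ge 0$ componentwise together with $Ax\le 0$. Hence $x=0$, and so $Q_{A,0}=\{0\}$.

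For the opposite direction I would argue by contradiction using strong duality (Theorem~\ref{strong:duality}) applied to the primal-dual pair
\[\max\{c^T x : x\in Q_{A,0}\},\qquad \min\{0^T p : p\in Q_{A,c}^*\}.\]
Since $0\in Q_{A,0}$, alternative c) of Theorem~\ref{strong:duality} is excluded. If one assumes $Q_{A,c}^*=\emptyset$ for some $c$, then alternative b) must hold and forces $\sup\{c^T x:x\in Q_{A,0}\}=+\infty$; this produces a non-zero element of $Q_{A,0}$, contradicting $Q_{A,0}=\{0\}$.

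I do not expect any serious obstacle. The main points to handle carefully are the standard equivalence between boundedness and triviality of the recession cone, and aligning the primal and dual in Theorem~\ref{strong:duality} so that the right alternative is excluded by the presence of $0\in Q_{A,0}$. Everything else reduces to linear-algebraic bookkeeping built on the preliminaries of Section~\ref{prelim1}.
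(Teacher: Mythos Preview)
Your proposal is correct and follows essentially the same route as the paper: both arguments identify $\rec(Q_{A,b})$ with $Q_{A,0}$ (the paper cites Rockafellar and Ziegler, you prove it directly) and then apply strong duality (Theorem~\ref{strong:duality}) with $b=0$ to link $Q_{A,0}=\{0\}$ and nonemptiness of all $Q_{A,c}^*$. The one genuine variation is in the implication ``$Q_{A,c}^*\neq\emptyset$ for all $c$ $\Rightarrow$ $Q_{A,0}=\{0\}$'': the paper proves the contrapositive via strong duality (a nonzero $c\in Q_{A,0}$ makes the primal $\max\{c^Tx:x\in Q_{A,0}\}$ unbounded, forcing $Q_{A,c}^*=\emptyset$), whereas your direct computation $\|x\|_2^2=(Ax)^Tp\le 0$ for $p\in Q_{A,x}^*$ bypasses duality entirely for this direction. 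Both are one-liners, so this is a matter of taste rather than a different strategy.
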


\begin{proof}
Let $Q_{A,b}\in\mc{G}_{A}$ be arbitrary.
According to Theorem 8.4 in \cite{Rockafellar:70}, the set $Q_{A,b}$ is bounded
if and only if $\rec(Q_{A,b})=\{0\}$, and by Proposition 1.12 
from \cite{Ziegler:95}, we have $\rec(Q_{A,b})=Q_{A,0}$.

By definition, we have $0\in\rec(Q_{A,b})$.
By Theorem \ref{strong:duality}, the statement $Q_{A,0}=\{0\}$ implies that
$Q_{A,c}^*\neq\emptyset$ for all $c\in\R^d$.
Conversely, if there exists $c\in Q_{A,0}\setminus\{0\}$, then
$\lambda c\in Q_{A,0}$ for all $\lambda\ge 0$, and 
$\max\{c^Tx:x\in Q_{A,0}\}$ is unbounded.
But then, Theorem \ref{strong:duality} yields $Q_{A,c}^*=\emptyset$.
\end{proof}

The following statement gives some insight into the structure 
of $\mc{G}_A$.

\begin{corollary} \label{cacc}
The space $\mc{G}_A$ is a cone if and only if $\mc{G}_A$ consists of polytopes.
\end{corollary}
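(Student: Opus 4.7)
The statement is a short consequence of Theorem \ref{alternative}, so my plan is to reduce everything to the characterization ``$\mc{G}_A$ consists of polytopes iff $Q_{A,0}=\{0\}$'' and then simply exploit the algebra of the sets $Q_{A,b}$ under non-negative scaling.

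First I would fix the meaning of ``cone'': a subset $\mc{G}_A \subset \mc{K}_c(\R^d)$ is a cone when $\lambda P \in \mc{G}_A$ for every $P \in \mc{G}_A$ and every $\lambda \ge 0$. The key algebraic identity I would record is that for $\lambda>0$ and any $b\in\mc{C}_A$ we have $\lambda Q_{A,b}=Q_{A,\lambda b}$, which follows directly from the fact that $Ax\le b$ iff $A(\lambda x)\le\lambda b$. For $\lambda=0$ and any nonempty $P$, the set $\lambda P$ is exactly $\{0\}$.

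For the forward direction (polytopes $\Rightarrow$ cone), I would start from $Q_{A,0}=\{0\}$ (Theorem \ref{alternative}). For $P = Q_{A,b}\in\mc{G}_A$ and $\lambda>0$, the identity $\lambda P=Q_{A,\lambda b}$ places $\lambda P$ into $\mc{G}_A$. For $\lambda=0$, the set $\lambda P=\{0\}$ equals $Q_{A,0}$, which is nonempty under the assumption, so it belongs to $\mc{G}_A$. For the reverse direction (cone $\Rightarrow$ polytopes), assuming $\mc{G}_A\neq\emptyset$ (otherwise the claim is vacuous), I would pick any $P\in\mc{G}_A$, multiply by $\lambda=0$ to obtain $\{0\}\in\mc{G}_A$, and write $\{0\}=Q_{A,b}$ for some $b\in\R^N$. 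Since $0\in Q_{A,b}$ forces $b\ge 0$ componentwise, the inclusion $Q_{A,0}\subset Q_{A,b}=\{0\}$ together with $0\in Q_{A,0}$ yields $Q_{A,0}=\{0\}$, and Theorem \ref{alternative} finishes the proof.

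There is no real obstacle here; the only thing to be careful about is the case $\lambda=0$ in the forward direction, since one must verify that $\{0\}$ actually lies in $\mc{G}_A$ (i.e.\ that it is representable by an admissible right-hand side), and this is precisely what the characterization $Q_{A,0}=\{0\}$ furnishes for free.
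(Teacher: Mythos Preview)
Your proof is correct and follows essentially the same approach as the paper: both use the identity $\lambda Q_{A,b}=Q_{A,\lambda b}$ for $\lambda>0$ and identify the case $\lambda=0$ with $\{0\}=Q_{A,0}$. The only cosmetic difference is in the reverse implication: the paper invokes the dichotomy of Theorem~\ref{alternative} directly (if $\mc{G}_A$ consists of unbounded polyhedra then the bounded set $\{0\}$ cannot belong to it), whereas you argue by hand that $\{0\}=Q_{A,b}$ forces $b\ge 0$ and hence $Q_{A,0}\subset\{0\}$; both routes are equally short.
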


\begin{proof}
For any $\lambda>0$ and $b\in\mc{C}_A$, we have 
$\lambda Q_{A,b}=Q_{A,\lambda b}\in\mc{G}_A$.
If $\mc{G}_A$ consists of polytopes, then Theorem \ref{alternative}
yields $0\cdot Q_{A,b}=\{0\}=Q_{A,0}\in\mc{G}_A$, and $\mc{G}_A$ is a cone.
If $\mc{G}_A$ consists of unbounded polyhedra, then $0\cdot Q_{A,b}=\{0\}\notin\mc{G}_A$.
\end{proof}

%
%
%

\subsection{An explicit characterization of $\mc{C}_A$} \label{charsec}

We characterize the set $\mc{C}_A$ in terms of a system of linear inequalities.
To maintain readability, we will denote
\[Q_{A,0}^\diamond=\{q\in\R^N_+: A^Tq=0,\,\mathbbm{1}^Tq=1\}.\]
For the interpretation of Theorem \ref{characterisation},
note that $\ext(Q_{A,0}^\diamond)$ can be empty, while
$e_i\in\ext(Q_{A,a_i}^*)$ for all $i\in\{1,\ldots,N\}$.

\begin{theorem} \label{characterisation}
Let $b\in\R^N$. Then we have $b\in\mc{C}_A$ if and only if 
\begin{subequations}
\begin{align} 
&0\le b^Tp\quad\forall\,p\in Q_{A,0}^\diamond,\label{pre:nonempty}\\
&b_i\le b^Tp\quad\forall\,p\in Q_{A,a_i}^*,\quad\forall\,i\in\{1,\ldots,N\},
\label{pre:touching}
\end{align}
\end{subequations}
which is equivalent with
\begin{subequations}
\begin{align} 
&0\le b^Tp\quad\forall\,p\in\ext(Q_{A,0}^\diamond),\label{nonempty}\\
&b_i\le b^Tp\quad\forall\,p\in\ext(Q_{A,a_i}^*),\quad\forall\,i\in\{1,\ldots,N\}.
\label{touching}
\end{align}
\end{subequations}
\end{theorem}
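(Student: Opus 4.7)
The proof splits into two independent equivalences: first, that $b\in\mc{C}_A$ is equivalent to the ``full'' conditions \eqref{pre:nonempty}--\eqref{pre:touching}; second, that these in turn are equivalent to the restrictions \eqref{nonempty}--\eqref{touching} to extremal points.

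For the first equivalence, I would unpack the definition of $\mc{C}_A$ into two requirements: (i) $Q_{A,b}\neq\emptyset$, and (ii) for each $i$ the constraint $a_i^Tx\le b_i$ is attained somewhere on $Q_{A,b}$. Requirement (i) is handled by Farkas' lemma (Proposition \ref{Farkas}a), which gives $Q_{A,b}\neq\emptyset$ if and only if $b^Tp\ge 0$ for every $p\in Q_{A,0}^*$; rescaling any nonzero such $p$ by $\mathbbm{1}^Tp>0$ shows this is \eqref{pre:nonempty}. Requirement (ii), assuming (i) and noting that $e_i\in Q_{A,a_i}^*$, follows from strong LP duality (Theorem \ref{strong:duality}): the identity $\max\{a_i^Tx:x\in Q_{A,b}\}=\min\{b^Tp:p\in Q_{A,a_i}^*\}$, together with the automatic bound $a_i^Tx\le b_i$ on $Q_{A,b}$, shows that attainment of $b_i$ is the same as $\min\ge b_i$, i.e., \eqref{pre:touching}.

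For the second equivalence, the implications \eqref{pre:nonempty} $\Rightarrow$ \eqref{nonempty} and \eqref{pre:touching} $\Rightarrow$ \eqref{touching} are trivial. For the reverse of the first, $Q_{A,0}^\diamond$ is contained in the standard simplex and is therefore a bounded polytope, so any $p\in Q_{A,0}^\diamond$ is a convex combination of its extremal points, and linearity of $p\mapsto b^Tp$ transports the inequality. For the reverse of the second, I would exploit the fact that \eqref{nonempty} already implies $Q_{A,b}\neq\emptyset$ via the argument just given together with Farkas: then $\min\{b^Tp:p\in Q_{A,a_i}^*\}$ is finite by strong duality, this minimum is attained at a point of $\ext(Q_{A,a_i}^*)$ by standard LP theory (justified by Lemma \ref{vertices}b), and \eqref{touching} bounds that value below by $b_i$, yielding $b^Tp\ge b_i$ for every $p\in Q_{A,a_i}^*$.

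The main obstacle is handling $Q_{A,a_i}^*$ in the second step, since this polyhedron is typically unbounded and a direct convex combination argument fails. Either one invokes the Minkowski--Weyl representation, observing that after normalization by $\mathbbm{1}^Tr=1$ the extreme rays of $Q_{A,a_i}^*$ coincide with $\ext(Q_{A,0}^\diamond)$ so that \eqref{nonempty} controls the ray contribution, or one uses the cleaner LP-attainment argument sketched above, which stays entirely within the tools already collected in Section \ref{prelim1}.
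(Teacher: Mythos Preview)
Your argument is correct and, for the first equivalence, essentially identical to the paper's: Farkas for nonemptiness, strong LP duality for attainment of each $b_i$. For the second equivalence, the paper handles the unbounded polyhedron $Q_{A,a_i}^*$ via the decomposition $Q_{A,a_i}^*=\conv(\ext(Q_{A,a_i}^*))+Q_{A,0}^*$, bounding the convex part by \eqref{touching} and the recession part by \eqref{nonempty}; this is precisely your first option. Your second option---use \eqref{nonempty} to get $Q_{A,b}\neq\emptyset$, then invoke strong duality so that $\min\{b^Tp:p\in Q_{A,a_i}^*\}$ is finite and attained at a vertex, where \eqref{touching} applies---is a legitimate and slightly more self-contained alternative that avoids citing the Minkowski--Weyl representation theorem. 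Either way the content is the same; the paper's route makes the role of \eqref{nonempty} in controlling the recession directions more explicit, while your LP-vertex route stays closer to the duality tools already assembled in Section~\ref{prelim1}.
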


\begin{remark}
In fact, the conditions \eqref{nonempty} are equivalent with 
$Q_{A,b}\neq\emptyset$, and when $Q_{A,b}\neq\emptyset$, the conditions 
\eqref{touching} guarantee that every inequality $a_i^Tx\le b_i$ is attained.
\end{remark}

\begin{proof}[Proof of Theorem \ref{characterisation}]
By definition, we have $b\in\mc{C}_A$ if and only if
\begin{equation} \label{point1}
\max\{a_i^Tx:x\in Q_{A,b}\}=b_i\quad\forall\,i\in\{1,\ldots,N\}.
\end{equation}
Let us show that \eqref{point1} is equivalent with conditions 
(\ref{pre:nonempty},\,\ref{pre:touching}).

\medskip

Assume that statement \eqref{point1} holds. 
Then we have $Q_{A,b}\neq\emptyset$, so Proposition \ref{Farkas}a yields 
condition \eqref{pre:nonempty}.
Applying Theorem \ref{strong:duality} to statement \eqref{point1} gives
\begin{equation} \label{interm}
\min\{b^Tp:p\in Q_{A,a_i}^*\}=b_i\quad\forall\,i\in\{1,\ldots,N\},
\end{equation}
which implies condition \eqref{pre:touching}.

Conversely, assume that conditions \eqref{pre:nonempty} and \eqref{pre:touching} hold.
Statement \eqref{pre:nonempty} and Proposition \ref{Farkas}a imply $Q_{A,b}\neq\emptyset$.
Statement\eqref{pre:touching} and Theorem \ref{strong:duality} imply
\[b_i\le\min\{b^Tp:p\in Q_{A,a_i}^*\}=\max\{a_i^Tx:x\in Q_{A,b}\}\le b_i
\quad\forall\,i\in\{1,\ldots,N\},\]
so statement \eqref{point1} holds.

\medskip

Since $Q_{A,0}^\diamond$ is a bounded polytope, conditions \eqref{pre:nonempty}
and \eqref{nonempty} are equivalent.
Statement \eqref{pre:touching} clearly implies \eqref{touching}.
Assume that 
\eqref{nonempty} and \eqref{touching} hold.
By Lemma \ref{vertices}b, we have $e_i\in\ext(Q_{A,a_i}^*)$,
so by Theorem 4.24 in \cite{Lauritzen}, we have 
\[Q_{A,a_i}^*=\conv(\ext(Q_{A,a_i}^*))+\rec(Q_{A,a_i}^*)
=\conv(\ext(Q_{A,a_i}^*))+Q_{A,0}^*.\]
Let $p\in Q_{A,a_i}^*$, and let $q\in\conv(\ext(Q_{A,a_i}^*))$ and $r\in Q_{A,0}^*$
with $p=q+r$.
Statement \eqref{touching} gives $b_i\le b^Tq$, and statement \eqref{nonempty} 
implies $0\le b^Tr$, so 
\[b_i\le b^T(q+r)=b^Tp.\]
\end{proof}

We use Theorem \ref{characterisation} to characterize $\mc{C}_A$.
Note that $\ext(Q_{A,0}^\diamond)$ as well as $\ext(Q_{A,a_i})\setminus\{e_i\}$
with $i\in\{1,\ldots,N\}$ can be the empty set.
In this case, the corresponding matrices are to be interpreted as the empty matrix.

\begin{corollary} \label{matrix:representation}
If we enumerate
\begin{align*}
&\ext(Q_{A,0}^\diamond)=\{f^{0,1},\ldots,f^{0,m_0}\},\\
&\ext(Q_{A,a_i})\setminus\{e_i\}=\{f^{i,1},\ldots,f^{i,m_i}\},\quad i\in\{1,\ldots,N\},
\end{align*}
and form the matrix $F:=(F_0,\ldots,F_N)$ with
\begin{align*}
&F_0:=(f^{0,1},\ldots,f^{0,m_0})\in\R^{N\times m_0},\\
&F_i:=(f^{i,1}-e_i,\ldots,f^{i,m_i}-e_i)\in\R^{N\times m_i},\quad  i\in\{1,\ldots,N\},
\end{align*}
then $b\in\mc{C}_A$ is equivalent with $F^Tb\ge 0$.
\end{corollary}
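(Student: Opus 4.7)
The plan is to apply Theorem \ref{characterisation} directly and recast its two systems of inequalities as the single matrix inequality $F^Tb\ge 0$.

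First I would handle condition \eqref{nonempty}. Since $\ext(Q_{A,0}^\diamond)=\{f^{0,1},\ldots,f^{0,m_0}\}$, the inequalities $0\le b^Tp$ for $p\in\ext(Q_{A,0}^\diamond)$ are precisely the coordinate-wise inequalities $F_0^Tb\ge 0$. The case $m_0=0$ is the vacuous one, consistent with the empty-matrix convention announced in the statement.

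Second I would handle condition \eqref{touching}. Fix $i\in\{1,\ldots,N\}$. By Lemma \ref{vertices}b, the single vector $a_i$ is linearly independent (it lies in $S^{d-1}$), hence $e_i\in\ext(Q_{A,a_i}^*)$; the corresponding constraint $b_i\le b^Te_i=b_i$ is a tautology and may be discarded. For the remaining vertices $\ext(Q_{A,a_i}^*)\setminus\{e_i\}=\{f^{i,1},\ldots,f^{i,m_i}\}$, the inequality $b_i\le b^Tf^{i,j}$ is equivalent to $0\le b^T(f^{i,j}-e_i)$, and these are exactly the rows of $F_i^Tb\ge 0$. Concatenating the blocks $F_0,F_1,\ldots,F_N$ into $F$ yields the equivalence $b\in\mc{C}_A\Leftrightarrow F^Tb\ge 0$.

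There is no real obstacle in this argument; the corollary is a repackaging of Theorem \ref{characterisation}. The only point deserving explicit mention is the removal of $e_i$ from $\ext(Q_{A,a_i}^*)$, since this is the only place where the enumeration in the statement of the corollary differs from the enumeration of extremal points appearing in Theorem \ref{characterisation}.
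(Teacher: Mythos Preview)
Your argument is correct and is exactly the unpacking the paper has in mind: the corollary is stated without proof as an immediate reformulation of Theorem \ref{characterisation}, and the only nontrivial observation is that the vertex $e_i\in\ext(Q_{A,a_i}^*)$ contributes the tautology $b_i\le b_i$ and can therefore be dropped.
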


This representation is the key for the practical applicability of the theory
laid out in this paper.
In addition, it has nice theoretical consequences.

\begin{corollary} \label{ca:cone}
The set $\mc{C}_A$ is a closed convex subcone 
of $(\R^N,\|\cdot\|_\infty)$.
\end{corollary}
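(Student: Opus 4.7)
The plan is to deduce everything from Corollary \ref{matrix:representation}, which gives the explicit description
\[\mc{C}_A=\{b\in\R^N:F^Tb\ge 0\}.\]
Once this identity is in hand, the three required properties (closedness, convexity, cone property) each follow from a one-line algebraic observation about systems of homogeneous linear inequalities.

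First, I would note that $\mc{C}_A$ is the preimage of the closed positive orthant $\R^M_+$ (where $M$ is the total number of columns of $F$) under the continuous linear map $b\mapsto F^Tb$, hence closed in $(\R^N,\|\cdot\|_\infty)$. Next, for convexity, if $b,\tilde b\in\mc{C}_A$ and $\lambda\in[0,1]$, then $F^T(\lambda b+(1-\lambda)\tilde b)=\lambda F^Tb+(1-\lambda)F^T\tilde b\ge 0$. Finally, for the cone property, if $b\in\mc{C}_A$ and $\lambda\ge 0$, then $F^T(\lambda b)=\lambda F^Tb\ge 0$, so $\lambda b\in\mc{C}_A$. (Strictly speaking one should also check $0\in\mc{C}_A$, which is immediate since $F^T\mymathbb{0}=\mymathbb{0}\ge 0$, confirming that $Q_{A,0}\in\mc{G}_A$ always—each face $a_i^Tx\le 0$ is attained at $x=0$.)

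There really is no hard step here; the entire content of the corollary sits in Corollary \ref{matrix:representation}. If one wished to avoid invoking the matrix representation, one could argue directly from Theorem \ref{characterisation}: conditions \eqref{pre:nonempty} and \eqref{pre:touching} are each of the form $b^Tv\ge 0$ or $b^T(p-e_i)\ge 0$ for vectors $v,p-e_i\in\R^N$ ranging over certain sets, so $\mc{C}_A$ is an intersection (possibly infinite, but closed and convex) of closed half-spaces through the origin, and each such half-space is trivially a cone. The finiteness afforded by Corollary \ref{matrix:representation} is convenient but not logically needed for the present statement.
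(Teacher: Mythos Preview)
Your proposal is correct and follows exactly the route the paper intends: the corollary is stated immediately after Corollary~\ref{matrix:representation} without proof, precisely because the description $\mc{C}_A=\{b\in\R^N:F^Tb\ge 0\}$ makes closedness, convexity, and the cone property obvious. Your added remarks (the alternative via Theorem~\ref{characterisation} and the check that $\mymathbb{0}\in\mc{C}_A$) are sound and in the same spirit.
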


We can immediately draw the following conclusion.

\begin{corollary} \label{GA:closed}
The metric space $(\mc{G}_A,\dist_H)$ is complete.
\end{corollary}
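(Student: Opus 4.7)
The plan is to transfer completeness from the coordinate space $\mc{C}_A$ to $\mc{G}_A$ by exploiting the bi-Lipschitz homeomorphism $\phi$ established in Theorem \ref{phi}.

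First, I would observe that by Corollary \ref{ca:cone}, $\mc{C}_A$ is a closed subset of $(\R^N,\|\cdot\|_\infty)$, which is complete; hence $(\mc{C}_A,\|\cdot\|_\infty)$ is itself a complete metric space.

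Next, given any Cauchy sequence $(Q_n)_{n\in\N}$ in $(\mc{G}_A,\dist_H)$, I would set $b_n:=\phi^{-1}(Q_n)\in\mc{C}_A$. Since $\phi^{-1}$ is $1$-Lipschitz by Theorem \ref{phi}, we get $\|b_n-b_m\|_\infty\le\dist_H(Q_n,Q_m)$, so $(b_n)$ is Cauchy in $(\mc{C}_A,\|\cdot\|_\infty)$. By the previous paragraph, there exists $b\in\mc{C}_A$ with $b_n\to b$ in $\|\cdot\|_\infty$. Applying the $L_A$-Lipschitz map $\phi$ yields $\dist_H(Q_n,\phi(b))=\dist_H(\phi(b_n),\phi(b))\le L_A\|b_n-b\|_\infty\to 0$, so $Q_n\to\phi(b)\in\mc{G}_A$, which establishes completeness.

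There is no real obstacle here: the whole content sits in Theorem \ref{phi} (bi-Lipschitz equivalence) and Corollary \ref{ca:cone} (closedness of $\mc{C}_A$ in $\R^N$), both of which are available. The only thing to be careful about is not to confuse the direction of the Lipschitz estimates — we need closedness of $\mc{C}_A$ in the ambient complete space and then transport of Cauchy sequences through the homeomorphism, which works in both directions precisely because $\phi$ and $\phi^{-1}$ are Lipschitz.
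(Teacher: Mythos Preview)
Your proof is correct and follows exactly the paper's approach: invoke Corollary \ref{ca:cone} to get completeness of $(\mc{C}_A,\|\cdot\|_\infty)$ and Theorem \ref{phi} for the bi-Lipschitz homeomorphism $\phi$, then transfer completeness. The paper's version is simply terser, omitting the explicit Cauchy-sequence chase that you have (correctly) spelled out.
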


\begin{proof}
According to Theorem \ref{phi}, the mapping $\phi:\mc{C}_A\to\mc{G}_A$, 
$\phi(b)=Q_{A,b}$, is a bi-Lipschitz homeomorphism between $(\mc{C}_A,\|\cdot\|_\infty)$ 
and $(\mc{G}_A,\dist_H)$, and by Corollary \ref{ca:cone}, the space
$(\mc{C}_A,\|\cdot\|_\infty)$ is complete.
\end{proof}

\subsection{Vertices of $Q_{A,0}^\diamond$ and $Q_{A,c}^*$}
\label{verticessec}

In this section, we gather information on the representation
of vertices that will be used later in the paper.

\begin{lemma} \label{nonempty:vertices}
Let $p\in Q_{A,0}^\diamond$.
Then $p\in\ext(Q_{A,0}^\diamond)$ if and only if 
the vectors $\{(a_i^T,1)^T:i\in I_p^*\}$ are linearly independent.
\end{lemma}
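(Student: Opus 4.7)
The plan is to reduce the claim to Lemma \ref{vertices}b by rewriting $Q_{A,0}^\diamond$ as a polyhedron of the form $\{q\in\R^N_+:\tilde A^Tq=c\}$ for a suitably augmented matrix $\tilde A$ and right-hand side $c$.

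Form $\tilde A\in\R^{N\times(d+1)}$ by appending a column of ones to $A$, so that the $i$th row of $\tilde A$ is $(a_i^T,1)$, and set $c:=\binom{0}{1}\in\R^{d+1}$. A direct computation gives
\[\tilde A^Tq=\binom{A^Tq}{\mathbbm{1}^Tq}\quad\forall\,q\in\R^N,\]
so the two linear constraints $A^Tq=0$ and $\mathbbm{1}^Tq=1$ combine into the single equation $\tilde A^Tq=c$. Hence $Q_{A,0}^\diamond=\{q\in\R^N_+:\tilde A^Tq=c\}$, which is exactly the form of the dual polyhedron considered in Lemma \ref{vertices}b, now with $\tilde A$ playing the role of $A$.

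Invoking that lemma yields $p\in\ext(Q_{A,0}^\diamond)$ if and only if the rows of $\tilde A$ indexed by $I_p^*$, i.e.\ the vectors $\{(a_i^T,1)^T:i\in I_p^*\}$, are linearly independent, which is exactly the claim.

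The only point that requires a moment of care is that Lemma \ref{vertices}b has been stated in this paper for the specific matrix $A$ whose rows lie on $S^{d-1}$, whereas the augmented rows $(a_i^T,1)$ are no longer unit vectors; this is not a genuine obstacle because the underlying source formulates the result for arbitrary matrices and does not invoke any normalization of the rows. If one prefers to avoid re-appealing to the general source, a self-contained verification is also short: for the sufficiency direction, nonnegativity forces any $q,r\in Q_{A,0}^\diamond$ with $p=\tfrac{1}{2}(q+r)$ to be supported in $I_p^*$, after which $\tilde A^T(q-r)=0$ combined with the assumed linear independence of $\{(a_i^T,1)^T:i\in I_p^*\}$ gives $q=r=p$; for the converse, any nontrivial relation $\sum_{i\in I_p^*}\lambda_i(a_i^T,1)^T=0$ extended by zeros outside $I_p^*$ produces perturbations $p\pm\varepsilon\lambda\in Q_{A,0}^\diamond$ for all sufficiently small $\varepsilon>0$, so $p$ is not extreme.
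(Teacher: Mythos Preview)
Your proof is correct and matches the paper's own argument: the paper's proof is the single line ``Apply Lemma \ref{vertices}b to $Q_{A,0}^\diamond=Q_{(A^T,\mathbbm{1})^T,(0_{\R^d},1)^T}^*$,'' which is precisely your augmentation $\tilde A=(A,\mathbbm{1})$, $c=(0,1)^T$. Your additional remark about the normalization hypothesis and the self-contained fallback verification are sound but go beyond what the paper records.
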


\begin{remark}
An elementary, but lengthy proof shows that the following statement holds:
If $p\in Q_{A,0}^\diamond$, then $p\in\ext(Q_{A,0}^\diamond)$ if and only if 
for every $i_0\in I_p^*$, the vectors $\{a_i:i\in I_p^*\setminus\{i_0\}\}$
are linearly independent.
\end{remark}

\begin{proof}[Proof of Lemma \ref{nonempty:vertices}]
Apply Lemma \ref{vertices}b to 
$Q_{A,0}^\diamond=Q_{(A^T,\mathbbm{1})^T,(0_{\R^d},1)^T}^*$.
\end{proof}

The following statement is an immediate consequence of the above lemma.

\begin{corollary} \label{unique:nonempty}
If $p\in Q_{A,0}^\diamond$ and $\tilde p\in\ext(Q_{A,0}^\diamond)$ satisfy
$I_p^*\subset I_{\tilde p}^*$, then $p=\tilde p$.
\end{corollary}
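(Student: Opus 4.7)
The plan is to use Lemma~\ref{nonempty:vertices} to convert the extremality of $\tilde p$ into a linear independence statement about the vectors $(a_i^T,1)^T$ indexed by $I_{\tilde p}^*$, and then argue that the difference $p-\tilde p$ is forced to be zero by its support pattern together with the constraints defining $Q_{A,0}^\diamond$.

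First I would form $d := p - \tilde p$ and observe that $A^T d = 0$ and $\mathbbm{1}^T d = 0$, since both $p$ and $\tilde p$ lie in $Q_{A,0}^\diamond$. The crucial step is to identify the support of $d$: for $i \notin I_{\tilde p}^*$, we have $\tilde p_i = 0$, and the assumption $I_p^* \subset I_{\tilde p}^*$ also forces $p_i = 0$, so $d_i = 0$ outside $I_{\tilde p}^*$. Consequently the two equations $A^T d = 0$ and $\mathbbm{1}^T d = 0$ can be combined into
\[
\sum_{i \in I_{\tilde p}^*} d_i \begin{pmatrix} a_i \\ 1 \end{pmatrix} = 0.
\]

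Next I would invoke Lemma~\ref{nonempty:vertices} applied to $\tilde p \in \ext(Q_{A,0}^\diamond)$, which says that $\{(a_i^T,1)^T : i \in I_{\tilde p}^*\}$ is linearly independent. This independence forces $d_i = 0$ for all $i \in I_{\tilde p}^*$, and hence $d = 0$, i.e., $p = \tilde p$.

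I do not expect any real obstacle here: the only subtle point is recognizing that the inclusion $I_p^* \subset I_{\tilde p}^*$ (and not just $I_p^* \subset I_{\tilde p}^*$ in some weaker sense) together with nonnegativity of both vectors is exactly what pins down the support of $d$, so that linear independence of the augmented vectors can be applied. Once the support observation is made, the argument is a one-line linear algebra deduction from Lemma~\ref{nonempty:vertices}.
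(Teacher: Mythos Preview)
Your proof is correct. Both your argument and the paper's rest on the linear independence of $\{(a_i^T,1)^T:i\in I_{\tilde p}^*\}$ supplied by Lemma~\ref{nonempty:vertices}, but you apply it directly to the difference $d=p-\tilde p$, whereas the paper forms the midpoint $\hat p=\tfrac12(p+\tilde p)$, observes $I_{\hat p}^*=I_{\tilde p}^*$, invokes the converse direction of Lemma~\ref{nonempty:vertices} to conclude $\hat p\in\ext(Q_{A,0}^\diamond)$, and then uses the definition of an extreme point to force $p=\tilde p$. Your route is a touch more economical, needing only one implication of Lemma~\ref{nonempty:vertices} and no appeal to the notion of extremality beyond that lemma; the paper's version has the minor conceptual appeal of staying inside $Q_{A,0}^\diamond$ throughout rather than working with a signed vector $d$.
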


\begin{proof}
Since $\tilde p\in\ext(Q_{A,0}^\diamond)$, 
the vectors $\{(a_i^T,1)^T:i\in I_{\tilde{p}}^*\}$ are linearly independent
by Lemma \ref{nonempty:vertices} .
The vector $\hat p:=\frac12(p+\tilde p)\in Q_{A,0}^\diamond$ clearly satisfies 
$I_{\hat p}^*=I_{\tilde p}^*$.
Hence Lemma \ref{nonempty:vertices} yields $\hat p\in\ext(Q_{A,0}^\diamond)$,
which forces $p=\tilde p=\hat p$.
\end{proof}

Let us check that $\ext(Q_{A,c}^*)\neq\emptyset$ whenever
$Q_{A,c}^*\neq\emptyset$.

\begin{lemma} \label{has:vertices}
If $c\in\R^d\setminus\{0\}$, then for every $p\in Q_{A,c}^*$, 
there exists some $\tilde{p}\in\ext(Q_{A,c}^*)$ with $I_{\tilde{p}}^*\subset I_p^*$.
\end{lemma}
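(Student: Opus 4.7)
The plan is to reduce this to Caratheodory's theorem for cones (Proposition \ref{Caratheodory}) together with the vertex characterization in Lemma \ref{vertices}b. Given $p \in Q_{A,c}^*$, the equation $A^T p = c$ together with $p \ge 0$ reads
\[c = \sum_{i \in I_p^*} p_i\, a_i \in \cone(\{a_i : i \in I_p^*\}).\]
So the natural first step is to apply Proposition \ref{Caratheodory} with $I = I_p^*$ to obtain a subset $J \subset I_p^*$ such that $c \in \cone(\{a_i : i \in J\})$ and the vectors $\{a_i : i \in J\}$ are linearly independent.

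The second step is to convert this back into a point of $Q_{A,c}^*$. Pick nonnegative coefficients $\lambda_i$ for $i \in J$ with $\sum_{i \in J} \lambda_i a_i = c$, and define $\tilde{p} \in \R^N_+$ by $\tilde{p}_i := \lambda_i$ for $i \in J$ and $\tilde{p}_i := 0$ otherwise. Then $A^T\tilde{p} = c$, so $\tilde{p} \in Q_{A,c}^*$, and by construction $I_{\tilde{p}}^* \subset J \subset I_p^*$. Since $I_{\tilde{p}}^*$ is a subset of $J$ and $\{a_i : i \in J\}$ is linearly independent, the subfamily $\{a_i : i \in I_{\tilde p}^*\}$ is linearly independent as well, and Lemma \ref{vertices}b gives $\tilde{p} \in \ext(Q_{A,c}^*)$.

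There is really no serious obstacle, since Caratheodory does all the work; the only point that needs a brief comment is why the assumption $c \ne 0$ is used. It enters to guarantee that $J$ is nonempty in the Caratheodory step (otherwise one would have $c \in \cone(\emptyset) = \{0\}$), but it is not otherwise needed in the argument; the resulting $\tilde{p}$ is extremal regardless. I would phrase the proof in these two short steps and invoke Lemma \ref{vertices}b at the end.
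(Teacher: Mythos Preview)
Your proof is correct and matches the paper's argument essentially line for line: both observe $c\in\cone(\{a_i:i\in I_p^*\})$, apply Proposition~\ref{Caratheodory} to extract a linearly independent subfamily indexed by $J\subset I_p^*$, build $\tilde p\in Q_{A,c}^*$ supported in $J$, and invoke Lemma~\ref{vertices}b. Your remark on the role of $c\neq 0$ (forcing $J\neq\emptyset$) is exactly the one the paper makes.
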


\begin{proof}
If $p\in Q_{A,c}^*$, then $c\in\cone(\{a_i:i\in I_p^*\})$.
By Proposition \ref{Caratheodory}, there exists $J\subset I_p^*$
such that $c\in\cone(\{a_j:j\in J\})$ and $\{a_j:j\in J\}$ are linearly independent.
Since $c\neq 0$, we have $J\neq\emptyset$, and
there is $\tilde{p}\in Q_{A,c}^*$ with $I_{\tilde{p}}^*\subset J$.
By Lemma \ref{vertices}b, we have $\tilde{p}\in\ext(Q_{A,c}^*)$.
\end{proof}

A statement similar with Corollary \ref{unique:nonempty} 
holds for vertices of $Q_{A,c}^*$.

\begin{lemma} \label{extremal:p}
Let $c\in\R^d$. If $p\in Q_{A,c}^*$ and $\tilde p\in\ext(Q_{A,c}^*)$ satisfy
$I_{p}^*\subset I_{\tilde p}^*$, then we have $p=\tilde p$.
In particular, if $p\in\ext(Q_{A,a_i}^*)\setminus\{e_i\}$, then $p_i=0$.
\end{lemma}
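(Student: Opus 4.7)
The plan is to exploit Lemma \ref{vertices}b, which says that $\tilde p\in\ext(Q_{A,c}^*)$ forces the family $\{a_i:i\in I_{\tilde p}^*\}$ to be linearly independent, and then to use this rigidity to compare $p$ and $\tilde p$ coordinate by coordinate.

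For the main statement, I would first observe that the hypothesis $I_p^*\subset I_{\tilde p}^*$ means that both vectors are supported on $I_{\tilde p}^*$: for any index $i\notin I_{\tilde p}^*$ we automatically get $p_i=0$ (since $p\in\R^N_+$ and $i\notin I_p^*$) as well as $\tilde p_i=0$. Next, since both $p$ and $\tilde p$ lie in $Q_{A,c}^*$, they both satisfy $A^T p=c=A^T\tilde p$, and subtracting these equations yields
\[
\sum_{i\in I_{\tilde p}^*}(p_i-\tilde p_i)\,a_i=0.
\]
The linear independence of $\{a_i:i\in I_{\tilde p}^*\}$ furnished by Lemma \ref{vertices}b then forces $p_i=\tilde p_i$ on $I_{\tilde p}^*$, and combined with the preceding observation this gives $p=\tilde p$.

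For the ``in particular'' clause I would apply the first assertion with the roles reversed, taking the extremal point to be $p\in\ext(Q_{A,a_i}^*)$ and the comparison point to be $e_i$. The vector $e_i$ lies in $Q_{A,a_i}^*$ because $A^Te_i=a_i$, and its support is simply $I_{e_i}^*=\{i\}$. Now suppose, for contradiction, that $p_i>0$; then $i\in I_p^*$, so $I_{e_i}^*=\{i\}\subset I_p^*$. The first part of the lemma, applied to the extremal $p$ and the point $e_i$, yields $e_i=p$, contradicting the hypothesis $p\in\ext(Q_{A,a_i}^*)\setminus\{e_i\}$. Hence $p_i=0$.

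I do not anticipate a substantive obstacle: once one notices that the containment $I_p^*\subset I_{\tilde p}^*$ allows a single linear-independence argument to determine both vectors uniquely, the proof is essentially a direct application of Lemma \ref{vertices}b. The only minor subtlety is being careful in the ``in particular'' part to swap the roles of $p$ and $\tilde p$ in the first assertion, so that the extremal point used in the linear-independence step is the one given by the hypothesis.
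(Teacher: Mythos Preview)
Your proof is correct and follows essentially the same approach as the paper: both arguments use Lemma~\ref{vertices}b to obtain linear independence of $\{a_i:i\in I_{\tilde p}^*\}$ and then compare the two representations of $c$ (you subtract, the paper writes them side by side), and for the ``in particular'' clause both assume $p_i>0$, note $I_{e_i}^*\subset I_p^*$, and invoke the first part with the extremal point $p$ to force $e_i=p$.
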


\begin{proof}
By assumption, we can represent 
$\sum_{i\in I_{\tilde p}^*}p_ia_i=c=\sum_{i\in I_{\tilde p}^*}\tilde p_ia_i$.
Since $\tilde p\in\ext(Q_{A,c}^*)$, Lemma \ref{vertices}b yields that
the vectors $\{a_i:i\in I_{\tilde p}^*\}$ are linearly independent,
which forces $p=\tilde p$.
Now let $p\in\ext(Q_{A,a_i}^*)$ with $p_i>0$.
Then the inclusions $e_i\in Q_{A,a_i}^*$ and $I_{e_i}\subset I_p^*$ 
imply $e_i=p$.
\end{proof}

%
%

\subsection{Redundancy in the characterisation of $\mc{C}_A$} \label{eliminatesec}

The system (\ref{nonempty},\,\ref{touching}) is, in general, highly redundant. 
From a practical perspective, redundancies can be eliminated 
in an offline computation.
The results in this section are useful for this elimination process,
and they provide some intuition for the origin of the redundancy.

\begin{theorem}
The system of inequalities \eqref{nonempty} does not contain redundant conditions.
\end{theorem}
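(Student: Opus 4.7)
The plan is to show that each inequality in \eqref{nonempty} is independent of the others, by constructing, for every $p_0\in\ext(Q_{A,0}^\diamond)$, a specific $b\in\R^N$ that violates the inequality indexed by $p_0$ while still satisfying every inequality indexed by the other vertices. If $\ext(Q_{A,0}^\diamond)=\emptyset$, the claim is vacuous, so assume there is at least one vertex; note also that $Q_{A,0}^\diamond$ is a polytope (it is bounded by $q\ge 0$ and $\mathbbm{1}^Tq=1$), hence has finitely many vertices.

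Fix $p_0\in\ext(Q_{A,0}^\diamond)$. The key input is Corollary \ref{unique:nonempty}: for every other vertex $p\in\ext(Q_{A,0}^\diamond)\setminus\{p_0\}$, we must have $I_p^*\not\subset I_{p_0}^*$, i.e.\ $p$ has at least one positive coordinate whose index lies outside the support of $p_0$. I would then define, for a parameter $M>0$ to be chosen,
\[
b_i:=\begin{cases}-1,& i\in I_{p_0}^*,\\ M,& i\notin I_{p_0}^*.\end{cases}
\]
Because $\mathbbm{1}^Tp_0=1$ and $p_0$ is supported in $I_{p_0}^*$, this yields $b^Tp_0=-1<0$.

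The remaining step is to show $b^Tp\ge 0$ for every $p\in\ext(Q_{A,0}^\diamond)\setminus\{p_0\}$. For each such $p$ one computes
\[
b^Tp=-\sum_{i\in I_p^*\cap I_{p_0}^*}p_i+M\sum_{i\in I_p^*\setminus I_{p_0}^*}p_i.
\]
The first sum is bounded below by $-1$ since $\mathbbm{1}^Tp=1$. By Corollary \ref{unique:nonempty}, the second sum is strictly positive. Set
\[
\delta:=\min\Bigl\{\sum_{i\in I_p^*\setminus I_{p_0}^*}p_i \;:\; p\in\ext(Q_{A,0}^\diamond)\setminus\{p_0\}\Bigr\},
\]
which is a minimum over a finite non-empty set of positive numbers and hence strictly positive. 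Choosing $M:=1/\delta$ gives $b^Tp\ge -1+M\delta=0$ for every such $p$, as desired.

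The only real obstacle is ruling out the possibility that some other vertex $p$ is supported entirely inside $I_{p_0}^*$; this is exactly what Corollary \ref{unique:nonempty} prevents, so that the construction goes through. Everything else reduces to the observation that $\ext(Q_{A,0}^\diamond)$ is finite, allowing us to pick the constant $M$ uniformly.
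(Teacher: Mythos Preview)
Your proof is correct and uses the same key ingredient as the paper---Corollary~\ref{unique:nonempty}---but the overall strategy is different. The paper argues by contradiction: assuming the inequality indexed by $p^k$ is redundant, Proposition~\ref{Farkas}b yields a representation $p^k=\sum_{j\neq k}\lambda_jp^j$ with $\lambda\in\R^m_+\setminus\{0\}$; nonnegativity of the $p^j$ forces $I_{p^j}^*\subset I_{p^k}^*$ for every $j$ with $\lambda_j>0$, and Corollary~\ref{unique:nonempty} then gives $p^j=p^k$, contradicting $\lambda_k=0$. Your approach is instead constructive: you exhibit an explicit $b$ that violates the chosen inequality while satisfying all the others. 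The paper's argument is a bit shorter and leverages the Farkas machinery already assembled; your argument is more elementary in that it avoids Farkas altogether and produces a concrete witness of non-redundancy, at the cost of the extra step of choosing $M$ uniformly over the finite vertex set.
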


\begin{proof}
Let $\ext(Q_{A,0}^\diamond)=\{p^1,\ldots,p^m\}$ with pairwise distinct $p^j\in\R^N_+$.
Assume that for some $k\in\{1,\ldots,m\}$, the condition $(p^k)^Tb\ge 0$ 
is redundant in system \eqref{nonempty}.
By Proposition \ref{Farkas}b, there exists $\lambda\in\R^m_+\setminus\{0\}$ 
with $\lambda_k=0$ and $p^k=\sum_{j=1}^m\lambda_jp^j$. 
Since $p^j\in\R^N_+$ for $j\in\{1,\ldots,m\}$, it follows that
\[I_{p^j}^*\subset I_{p^k}^*\quad\forall j\in\{1,\ldots,m\},\]
so Corollary \ref{unique:nonempty} gives $p^j=p^k$ for all 
$j\in\{1,\ldots,m\}$ with $\lambda_j>0$.
This is a contradiction.
\end{proof}

The way in which the geometry of the matrix $A$ determines the redundancies in
conditions \eqref{touching} is vaguely related to Haar's lemma.
It is currently not clear whether the complete system (\ref{nonempty},\,\ref{touching})
of linear inequalities contains redundancies other than those identified in 
the following theorem.

\begin{theorem} \label{redundancy}
Let $k\in\{1,\ldots,N\}$, and let $p,\tilde p\in\ext(Q_{A,a_k}^*)\setminus\{e_k\}$ with
\begin{equation} \label{cone:contained}
\cone(\{a_i:i\in I_p^*\})\subsetneq\cone(\{a_i:i\in I_{\tilde p}^*\}).
\end{equation}
Then the condition $b_k\le b^T\tilde p$ is redundant in the system 
of inequalities \eqref{touching}.
\end{theorem}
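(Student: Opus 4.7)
The plan is to derive $b_k \le b^T\tilde p$ as a non-negative linear combination of inequalities already present in the system \eqref{touching}. Specifically, I will use the instance $b_k \le b^T p$ (available because $p \in \ext(Q_{A,a_k}^*)$) together with suitably chosen instances $b_i \le b^T q^i$, with $q^i \in \ext(Q_{A,a_i}^*) \setminus \{e_i\}$ and $i$ ranging over $I_p^*$. The key identity driving the argument will be
\begin{align*}
\tilde p = \sum_{i\in I_p^*} p_i q^i,
\end{align*}
from which $b^T\tilde p - b^T p = \sum_{i \in I_p^*} p_i(b^T q^i - b_i)$, and every summand will turn out to be non-negative.

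To build the $q^i$, I would exploit \eqref{cone:contained}: for each $i\in I_p^*$ we have $a_i \in \cone(\{a_j : j\in I_{\tilde p}^*\})$. Since $\tilde p \in \ext(Q_{A,a_k}^*)$, Lemma \ref{vertices}b gives linear independence of $\{a_j : j\in I_{\tilde p}^*\}$, so there is a unique non-negative expansion $a_i = \sum_{j\in I_{\tilde p}^*}\mu_{ij}a_j$. I would then set $q^i := \sum_{j\in I_{\tilde p}^*}\mu_{ij}e_j$; by construction $q^i \in Q_{A,a_i}^*$, and since $\{a_j : j\in I_{q^i}^*\}$ is a subset of a linearly independent set, Lemma \ref{vertices}b yields $q^i \in \ext(Q_{A,a_i}^*)$.

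The identity $\tilde p = \sum_i p_i q^i$ then follows from Lemma \ref{extremal:p}: the right-hand side is a point in $Q_{A,a_k}^*$ (as $A^T$ maps it to $\sum_i p_i a_i = a_k$) whose support is contained in $I_{\tilde p}^*$, and the extreme point $\tilde p$ is the unique such point. Splitting the resulting sum $\sum_{i\in I_p^*} p_i(b^T q^i - b_i)$ according to whether $i \in I_{\tilde p}^*$: in the affirmative case the uniqueness of the expansion of $a_i$ forces $q^i = e_i$ and the term is zero; in the opposite case $q^i \neq e_i$, so $q^i \in \ext(Q_{A,a_i}^*)\setminus\{e_i\}$ and \eqref{touching} delivers $b^T q^i \ge b_i$. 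Hence $b^T\tilde p \ge b^T p \ge b_k$, which is exactly the claim.

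I expect the identification of $\tilde p$ with $\sum_i p_i q^i$ to be the only conceptually loaded step; once it is in place, the rest is bookkeeping. The strict inclusion in \eqref{cone:contained} is not actually used in the computation itself. Its role is to ensure $p \neq \tilde p$: an inclusion $I_p^* \subset I_{\tilde p}^*$ would, by Lemma \ref{extremal:p}, collapse $p$ to $\tilde p$ and hence the cones to equality, contradicting strictness. Without it, the \emph{redundancy} would reduce to the trivial self-implication $b^T p \ge b_k \Rightarrow b^T p \ge b_k$.
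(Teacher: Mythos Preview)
Your proof is correct and follows essentially the same approach as the paper: for each $i\in I_p^*$ construct an extreme point $q^i\in\ext(Q_{A,a_i}^*)$ supported in $I_{\tilde p}^*$, use linear independence of $\{a_j:j\in I_{\tilde p}^*\}$ to obtain the identity $\tilde p=\sum_{i\in I_p^*}p_i q^i$, and chain $b_k\le b^Tp\le\sum_i p_i\,b^Tq^i=b^T\tilde p$. The paper additionally checks explicitly that each auxiliary inequality $b_i\le b^Tq^i$ is distinct from the target $b_k\le b^T\tilde p$ (this follows from $k\notin I_p^*$, via Lemma~\ref{extremal:p}), a point you leave implicit but which is easily filled.
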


\begin{proof}
First note that the condition $b_k\le b^Tp$ is one of the conditions 
in statement \eqref{touching}, and by \eqref{cone:contained}, 
we have $p\neq\tilde p$.

Again by \eqref{cone:contained}, for all $i\in I_p^*$,
there exist $p^i\in Q_{A,a_i}^*$ with $I_{p^i}^*\subset I_{\tilde p}^*$.
Lemma \ref{vertices}b and $\tilde p\in\ext(Q_{A,a_k}^*)$
imply linear independence of the vectors 
$\{a_i:i\in I_{\tilde p}^*\}$. 
Lemma \ref{vertices}b and linear independence of $\{a_i:i\in I_{p^i}^*\}$ 
imply $p^i\in\ext(Q_{A,a_i}^*)$, so the conditions
\begin{equation} \label{local:3}
b_i\le b^Tp^i\quad\forall\,i\in I_p^*,
\end{equation}
occur in the system of inequalities \eqref{touching} as well.
By Lemma \ref{extremal:p} and since $p\neq e_k$, we have $p_k=0$. 
Hence $a_i\neq a_k$ for all $i\in I_p^*$, which implies
$A^Tp^i=a_i\neq a_k=A^T\tilde{p}$ for all $i\in I_p^*$, so that
$p^i\neq\tilde p$ for all $i\in I_p^*$.

Now we show that the condition $b_k\le b^T\tilde p$ is a consequence 
of the inequalities $b_k\le b^Tp$ and \eqref{local:3}.
We compute
\[\sum_{j\in I_{\tilde p}^*}\tilde p_ja_j
=a_k
=\sum_{i\in I_p^*}p_ia_i
=\sum_{i\in I_p^*}p_i\sum_{j\in I_{\tilde p}^*}p^i_ja_j
=\sum_{j\in I_{\tilde p}^*}(\sum_{i\in I_p^*}p_ip^i_j)a_j,\]
and since $\{a_j:j\in I_{\tilde p}\}$ are linearly independent, it follows that
\begin{equation} \label{p:identity}
\tilde p_j=\sum_{i\in I_p^*}p_ip^i_j\quad\forall\,j\in I_{\tilde p}^*.
\end{equation}
Using \eqref{local:3} and \eqref{p:identity}, we arrive at the estimate
\begin{align*}
b_k\le b^Tp
=\sum_{i\in I_p^*}b_ip_i
\le\sum_{i\in I_p^*}p_i\sum_{j\in I_{\tilde p}^*}b_jp^i_j
=\sum_{j\in I_{\tilde p}^*}b_j\sum_{i\in I_p^*}p_ip^i_j
=\sum_{j\in I_{\tilde p}^*}b_j\tilde p_j
=b^T\tilde p,
\end{align*}
which proves that the condition $b_k\le b^T\tilde p$ is indeed redundant.
\end{proof}

The following immediate consequence of Theorem \ref{redundancy} explains 
the small number of irredundant constraints in \eqref{touching} when $d=2$. 

\begin{corollary} \label{red2d}
Let $t_1<\ldots<t_N\in[0,2\pi)$, and let $a_i^T=(\sin t,\cos t)$.
Then every condition $b_i\le p^Tb$ with $p\in\ext(Q_{A,a_i}^*)$
is redundant in \eqref{touching} unless 
\[I_p^*=\begin{cases}
\{N,2\},&i=1,\\
\{i-1,i+1\},&i\in\{2,\ldots,N-1\},\\
\{N-1,1\},&i=N.
\end{cases}\]
\end{corollary}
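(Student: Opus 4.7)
The plan is to apply Theorem \ref{redundancy}. The condition $b_i\le e_i^Tb=b_i$ is a tautology and hence trivially redundant, so it suffices to consider $p\in\ext(Q_{A,a_i}^*)\setminus\{e_i\}$. In dimension $d=2$ such $p$ are severely constrained: Lemma \ref{vertices}b forces $|I_p^*|\le 2$, and a singleton $I_p^*=\{l\}$ would give $p_la_l=a_i$ with $\|a_l\|=\|a_i\|=1$ and $p_l>0$, hence $a_l=a_i$ and $l=i$, i.e., $p=e_i$. So $|I_p^*|=2$, and by Lemma \ref{extremal:p} we may write $I_p^*=\{l,m\}$ with $l,m\neq i$ and $a_i=p_la_l+p_ma_m$, $p_l,p_m>0$. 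In particular $a_i$ lies in the relative interior of the convex cone $\cone(\{a_l,a_m\})$, which is an angular wedge of opening strictly less than $\pi$.

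With the parametrization $a_n^T=(\sin t_n,\cos t_n)$ and $t_1<\ldots<t_N$, the vectors $a_1,\ldots,a_N$ form a cyclically ordered family on $S^1$, and the cyclic neighbors of $a_i$ are precisely $a_{i-1}$ and $a_{i+1}$, interpreted modulo $N$ (which produces the special cases $i=1$ and $i=N$ listed in the statement). Since $a_i$ is strictly inside $\cone(\{a_l,a_m\})$, the vectors $a_l,a_m$ lie on opposite sides of $a_i$ along the wedge arc; fix notation so that $a_l$ is on the ``successor'' side of $a_i$ and $a_m$ on the ``predecessor'' side. Suppose now that $\{l,m\}$ is not the cyclic neighbor pair of $i$; then either $l\neq i+1$ or $m\neq i-1$.

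Say $l\neq i+1$. Then the cyclic successor $a_{i+1}$ of $a_i$ lies strictly between $a_i$ and $a_l$ along the wedge arc, so $\{a_{i+1},a_m\}$ is linearly independent and spans a sub-wedge (still of opening less than $\pi$) containing $a_i$ strictly in its interior; this yields a representation $a_i=\alpha a_{i+1}+\beta a_m$ with $\alpha,\beta>0$. The associated coefficient vector $q\in\R^N_+$ satisfies $I_q^*=\{i+1,m\}$ and is therefore an element of $\ext(Q_{A,a_i}^*)\setminus\{e_i\}$ by Lemma \ref{vertices}b. Because $a_l\notin\cone(\{a_{i+1},a_m\})$ while $a_l\in\cone(\{a_l,a_m\})$, the inclusion $\cone(\{a_{i+1},a_m\})\subsetneq\cone(\{a_l,a_m\})$ is strict, and Theorem \ref{redundancy}---applied with $q$ in the role of ``$p$'' and the original $p$ in the role of ``$\tilde p$''---renders $b_i\le b^Tp$ redundant. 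The case $m\neq i-1$ is entirely symmetric, with $a_{i-1}$ in place of $a_{i+1}$. The main technical point to verify is the purely geometric observation that $\{a_l,a_m\}$ linearly independent together with $a_i$ in the open cone forces the wedge arc from $a_l$ through $a_i$ to $a_m$ to have angle strictly less than $\pi$, so that cyclic neighbors of $a_i$ sitting between $a_i$ and $a_l$ (or between $a_i$ and $a_m$) automatically land strictly inside the wedge whenever $\{l,m\}$ is not the neighbor pair; once this is in place, Theorem \ref{redundancy} does the rest.
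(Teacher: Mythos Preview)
Your proof is correct and follows exactly the route the paper intends: the paper states this corollary as an ``immediate consequence of Theorem \ref{redundancy}'' without giving any details, and you have supplied precisely the geometric argument (pairwise-distinct unit vectors in $\R^2$ force $|I_p^*|=2$ for $p\neq e_i$, and replacing a non-neighbor generator by the cyclic neighbor yields a strictly smaller sub-cone to which Theorem \ref{redundancy} applies) that makes this immediate. The only cosmetic point is that your final paragraph re-announces as ``the main technical point to verify'' something you have already verified in the preceding lines; you could simply delete that sentence.
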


\subsection{Interior and boundary points of $\mc{C}_A$} \label{intptsec}

In this section, we will characterize interior and boundary points of $\mc{C}_A$,
which is essential for the use of interior-point methods
for optimization on $\mc{G}_A$.
For any $b\in\mc{C}_A$ and $I\subset\{1,\ldots,N\}$, we define
affine subspaces and facets by
\[H(A,b,I):=\{x\in\R^d:a_i^Tx=b_i,\,i\in I\},\quad
Q_{A,b}^{I}:=Q_{A,b}\cap H(A,b,I).\]

The set $\mc{C}_A$ has nonempty interior, which can be characterized easily
in the setting of Corollary \ref{matrix:representation}.

\begin{theorem} \label{interior:bigger}
The topological interior $\interior(\mc{C}_A)$ of $\mc{C}_A$ in $\R^N$ 
coincides with the set $\{b\in\R^N:F^Tb>0\}$, 
and we have $\mathbbm{1}\in\interior(\mc{C}_A)$.
\end{theorem}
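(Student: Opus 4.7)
The plan is to exploit the polyhedral representation $\mc{C}_A = \{b \in \R^N : F^T b \ge 0\}$ from Corollary \ref{matrix:representation}, which makes $\{b \in \R^N : F^T b > 0\}$ the natural candidate for $\interior(\mc{C}_A)$. The set $\{b : F^T b > 0\}$ is a finite intersection of open half-spaces, hence open in $\R^N$, and it is contained in $\mc{C}_A$; so the inclusion $\{b : F^T b > 0\} \subset \interior(\mc{C}_A)$ is immediate.

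For the reverse inclusion, I would first check that every column of $F$ is nonzero: columns $f^{0,j}$ of $F_0$ satisfy $\mathbbm{1}^T f^{0,j} = 1$ by the definition of $Q_{A,0}^\diamond$, while columns $f^{i,j} - e_i$ of $F_i$ are nonzero by construction, since $f^{i,j} \in \ext(Q_{A,a_i}^*) \setminus \{e_i\}$. Given $b \in \interior(\mc{C}_A)$, if some column $f$ of $F$ satisfied $f^T b = 0$, then for every sufficiently small $\delta > 0$ the perturbation $\tilde b := b - \delta f$ would lie in a prescribed $\|\cdot\|_\infty$-neighbourhood of $b$, yet $f^T \tilde b = -\delta \|f\|_2^2 < 0$ would contradict $\tilde b \in \mc{C}_A$. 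Hence $F^T b > 0$, which gives equality of the two sets.

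To establish $\mathbbm{1} \in \interior(\mc{C}_A)$, I would verify $F^T \mathbbm{1} > 0$ componentwise. Columns from $F_0$ yield $\mathbbm{1}^T f^{0,j} = 1 > 0$ by definition of $Q_{A,0}^\diamond$. For a column $f^{i,j} - e_i$ of $F_i$, Lemma \ref{extremal:p} applied to $f^{i,j} \in \ext(Q_{A,a_i}^*) \setminus \{e_i\}$ gives $f^{i,j}_i = 0$, so
\[a_i \;=\; A^T f^{i,j} \;=\; \sum_{k \ne i} f^{i,j}_k a_k.\]
Pairing both sides with $a_i$ and using $\|a_i\|_2 = 1$ produces the identity $1 = \sum_{k \ne i} f^{i,j}_k (a_k^T a_i)$.

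The main obstacle is to upgrade this identity to the strict inequality $\mathbbm{1}^T f^{i,j} > 1$. Because $a_i \ne 0$, the representation above forces at least one index $k \ne i$ to satisfy $f^{i,j}_k > 0$; for any such $k$, the pairwise distinctness of the unit vectors $a_1, \ldots, a_N$ together with the equality case of Cauchy--Schwarz gives $a_k^T a_i < 1$. Consequently
\[1 \;=\; \sum_{k \ne i} f^{i,j}_k (a_k^T a_i) \;<\; \sum_{k \ne i} f^{i,j}_k \;=\; \mathbbm{1}^T f^{i,j},\]
so $(f^{i,j} - e_i)^T \mathbbm{1} > 0$, which together with the previous steps completes the proof. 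The entire argument hinges on two standing assumptions on $A$, namely $a_i \in S^{d-1}$ and pairwise distinctness of the rows; both enter precisely in this final strict estimate.
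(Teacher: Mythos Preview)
Your proposal is correct and follows essentially the same route as the paper: both reduce to the polyhedral description $\mc{C}_A=\{b:F^Tb\ge 0\}$ from Corollary~\ref{matrix:representation}, observe that $F$ has no zero columns so that $\interior(\mc{C}_A)=\{b:F^Tb>0\}$, and then verify $F^T\mathbbm{1}>0$ via Lemma~\ref{extremal:p} together with the strict bound $a_i^Ta_j<1$ for $i\neq j$. The only difference is cosmetic: you spell out the perturbation argument for the interior characterization, whereas the paper relegates it to ``elementary arguments''.
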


\begin{proof}
By Corollary \ref{matrix:representation}, we have $\mc{C}_A=\{b\in\R^N:F^Tb\ge 0\}$.
Since $F$ does not contain any zero columns, it follows from elementary arguments that 
\[\interior(\mc{C}_A)=\interior(\{b\in\R^N:F^Tb\ge 0\})=\{b\in\R^N:F^Tb>0\}.\]
Let us check that $F^T\mathbbm{1}>0$.
By definition, any vector $f^{0,k}\in\ext(Q_{A,0}^\diamond)$
satisfies $\mathbbm{1}^Tf^{0,k}>0$.
By Lemma \ref{extremal:p}, a vector $f^{i,k}\in\ext(Q_{A,a_i}^*)\setminus\{e_i\}$ 
satisfies $f^{i,k}_i=0$, and we have $f^{i,k}\neq 0$.
Since $a_i^Ta_j<1$ for $i\neq j$, we find
\[\mathbbm{1}^Tf^{i,k}
=\sum_{j=1}^Nf^{i,k}_j
>\sum_{j=1}^Nf^{i,k}_ja_i^Ta_j
=a_i^T\sum_{j=1}^Nf^{i,k}_ja_j
=a_i^Ta_i
=1,\]
which shows that
$(f^{i,k}-e_i)^T\mathbbm{1}>0$, as desired.
\end{proof}

Let us take a closer look at the boundary of $\mc{C}_A$.
When one of the inequalities in \eqref{nonempty} is an equality, then $Q_{A,b}$
is flat.

\begin{proposition} \label{almostgone}
Let $b\in\mc{C}_A$, and let $p\in\ext(Q_{A,0}^\diamond)$. 
Then $b^Tp=0$ holds if and only if $Q_{A,b}\subset Q_{A,b}^{I_p^*}$.
\end{proposition}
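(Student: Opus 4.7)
The plan is to derive both directions of the equivalence from a single identity. For any $p \in Q_{A,0}^\diamond$, the defining condition $A^T p = 0$ gives
\[ b^T p \;=\; b^T p - x^T A^T p \;=\; p^T(b - Ax) \;=\; \sum_{i \in I_p^*} p_i(b_i - a_i^T x) \]
for every $x \in \R^d$, where the last step uses $p_i = 0$ for $i \notin I_p^*$. If, in addition, $x \in Q_{A,b}$, then every summand is non-negative: $p_i > 0$ for $i \in I_p^*$ by definition of the support, and $b_i - a_i^T x \geq 0$ because $Ax \leq b$. Hence $b^T p = 0$ iff each term vanishes, iff $a_i^T x = b_i$ for every $i \in I_p^*$, which is exactly $x \in Q_{A,b}^{I_p^*}$.

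From this point both implications drop out at once. For the forward direction, if $b^T p = 0$, then the equivalence above applied to an arbitrary $x \in Q_{A,b}$ shows $x \in Q_{A,b}^{I_p^*}$, giving the inclusion $Q_{A,b} \subset Q_{A,b}^{I_p^*}$. For the converse, $b \in \mc{C}_A$ guarantees $Q_{A,b} \neq \emptyset$, so we can pick some $x \in Q_{A,b}$; the assumed inclusion forces $a_i^T x = b_i$ for every $i \in I_p^*$, and the displayed identity then evaluates to $b^T p = 0$. There is no genuine obstacle here: the proof is a one-line computation, and extremality of $p$ is in fact not used anywhere, so the same statement holds for every $p \in Q_{A,0}^\diamond$. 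The single point to keep straight is that the sum can be restricted to $I_p^*$ precisely because $p$ vanishes off its support, which is what permits the non-negativity argument to conclude that every term of the sum is zero.
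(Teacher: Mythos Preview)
Your proof is correct and follows essentially the same approach as the paper: both arguments rest on the identity $b^Tp=\sum_{i\in I_p^*}p_i(b_i-a_i^Tx)$ obtained from $A^Tp=0$, together with the observation that each summand is non-negative for $x\in Q_{A,b}$. Your packaging is slightly cleaner (one identity handles both directions at once, rather than isolating an index), and your remark that extremality of $p$ is never used is correct and worth noting.
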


\begin{proof}
Let $b^Tp=0$. For any $i\in I_p^*$ and $x\in Q_{A,b}$, we compute
\begin{align*}
-p_ia_i^Tx & = p^TAx - p_ia_i^Tx = \sum_{j\in I_p^*}p_ja_j^Tx - p_ia_i^Tx\\
&= \sum_{j\in I_p^*\setminus\{i\}}p_ja_j^Tx
\le \sum_{j\in I_p^*\setminus\{i\}}p_jb_j = p^Tb-p_ib_i=-p_ib_i,
\end{align*}
which, after division by $-p_i$, gives $a_i^Tx\ge b_i$ and hence $a_i^Tx=b_i$ 
for all $i\in I_p^*$.

Conversely, assume that $Q_{A,b}\subset Q_{A,b}^{I_p^*}$ holds.
Since $b\in\mc{C}_A$, there exists $x\in Q_{A,b}$, and we obtain
\[p^Tb=\sum_{i\in I_p^*}p_ib_i=\sum_{i\in I_p^*}p_ia_i^Tx=(A^Tp)^Tx=0.\]
\end{proof}

In terms of dimension, this means the following.

\begin{proposition} \label{flat}
Let $b\in\mc{C}_A$.
Then $\dim(Q_{A,b})\le d-1$ if and only if there exists $p\in\ext(Q_{A,0}^\diamond)$
with $b^Tp=0$.
\end{proposition}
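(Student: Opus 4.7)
My plan is to prove the equivalence by separating the two directions. The backward direction will be a quick application of Proposition \ref{almostgone}, while the forward direction will require strong duality together with a vertex-extraction argument.

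For the backward direction, assume $p\in\ext(Q_{A,0}^\diamond)$ with $b^Tp=0$. I would invoke Proposition \ref{almostgone} to conclude $Q_{A,b}\subset Q_{A,b}^{I_p^*}\subset H(A,b,I_p^*)$. Since $p\in Q_{A,0}^\diamond$ satisfies $\mathbbm{1}^Tp=1$, the index set $I_p^*$ is nonempty, and because every $a_i$ is a unit vector, $\rank(\{a_i:i\in I_p^*\})\ge 1$. Hence the affine subspace $H(A,b,I_p^*)$ has dimension at most $d-1$, which gives the desired bound $\dim(Q_{A,b})\le d-1$.

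For the forward direction, suppose $\dim(Q_{A,b})\le d-1$. Then $Q_{A,b}$ lies in some affine hyperplane $\{x:c^Tx=\gamma\}$ with $c\in\R^d\setminus\{0\}$ and $\gamma\in\R$, so $\max\{c^Tx:x\in Q_{A,b}\}=\gamma$ and $\max\{(-c)^Tx:x\in Q_{A,b}\}=-\gamma$. Applying Theorem \ref{strong:duality} to each LP yields $p\in Q_{A,c}^*$ with $b^Tp=\gamma$ and $q\in Q_{A,-c}^*$ with $b^Tq=-\gamma$. The vector $p+q$ then lies in $\R^N_+$, satisfies $A^T(p+q)=0$, and attains $b^T(p+q)=0$. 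It is nonzero, for otherwise $p=q=0$ would force $c=A^Tp=0$. After normalizing by $\mathbbm{1}^T(p+q)>0$, the result is a point $\hat p\in Q_{A,0}^\diamond$ with $b^T\hat p=0$.

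The main obstacle, and the only nontrivial step, is upgrading this $\hat p$ to an extreme point. Here I would observe that $Q_{A,0}^\diamond$ is a bounded polytope, since it sits inside the standard simplex in $\R^N$, and that by Theorem \ref{characterisation} together with $b\in\mc{C}_A$ we have $b^T\tilde p\ge 0$ for all $\tilde p\in Q_{A,0}^\diamond$. Thus $b^T\cdot$ is a linear functional on the nonempty bounded polytope $Q_{A,0}^\diamond$ whose infimum equals $0$ and is attained at $\hat p$. Since a linear functional on a bounded polytope attains its minimum at a vertex, there exists $p\in\ext(Q_{A,0}^\diamond)$ with $b^Tp=0$, completing the proof.
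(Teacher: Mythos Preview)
Your proof is correct and follows essentially the same route as the paper's. The only cosmetic difference is that in the forward direction you invoke strong duality (Theorem~\ref{strong:duality}) to obtain $b^Tp=\gamma$ and $b^Tq=-\gamma$ exactly, whereas the paper uses Farkas' lemma (Proposition~\ref{Farkas}b) to obtain only the inequalities $b^Tq\le\alpha$ and $b^T\tilde q\le-\alpha$; both lead to a point $\hat p\in Q_{A,0}^\diamond$ with $b^T\hat p\le 0$, after which the vertex-extraction argument and Theorem~\ref{characterisation} finish the job in the same way.
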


\begin{proof}
Assume that there exists $p\in\ext(Q_{A,0}^\diamond)$ with $b^Tp=0$.
Since $\mathbbm{1}^Tp=1$, we have $\#I_p^*>0$.
Now Proposition \ref{almostgone} yields $\dim(Q_{A,b})\le\dim(Q_{A,b}^{I_p^*})\le d-1$.

Conversely, assume that $\dim(Q_{A,b})\le d-1$.
Then there exist $\alpha\in\R$ and $c\in\R^d\setminus\{0\}$ such that $c^Tx=\alpha$ 
for all $x\in Q_{A,b}$.
By assumption, we have $Q_{A,b}\neq\emptyset$, and
by Proposition \ref{Farkas}b applied to the inequalities $c^Tx\le\alpha$ 
and $(-c)^Tx\le-\alpha$, we conclude that there exist $q\in Q_{A,c}^*$ with $b^Tq\le\alpha$
and $\tilde q\in Q_{A,-c}^*$ with $b^T\tilde q\le-\alpha$.
Then 
\[\hat p:=\frac{q+\tilde q}{\mathbbm{1}^Tq+\mathbbm{1}^T\tilde q}\in Q_{A,0}^\diamond,\quad
b^T\hat p=\frac{b^Tq+b^T\tilde q}{\mathbbm{1}^Tq+\mathbbm{1}^T\tilde q}\le 0.\]
Since $Q_{A,0}^\diamond$ is a bounded polytope, this implies that there
exists $p\in\ext(Q_{A,0}^\diamond)$ with $b^Tp\le 0$.
By Theorem \ref{characterisation}, we have $b^Tp=0$.
\end{proof}

If one of the inequalities in \eqref{touching} is attained,
the corresponding facet is degenerated.

\begin{proposition} \label{degenerate:vertex}
Let $b\in\mc{C}_A$, and let $p\in\ext(Q_{A,a_k}^*)\setminus\{e_k\}$.
Then $k\notin I_p^*$, and $b^Tp=b_k$ holds if and only if 
$Q_{A,b}^k\subset Q_{A,b}^{I_p^*}$.
\end{proposition}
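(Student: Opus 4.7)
The plan is to dispose of the first assertion by citing Lemma \ref{extremal:p}, which directly states that $p\in\ext(Q_{A,a_k}^*)\setminus\{e_k\}$ forces $p_k=0$, i.e., $k\notin I_p^*$. The remaining equivalence should parallel Proposition \ref{almostgone} almost verbatim, with the roles of $A^Tp=0$ and $p^Tb=0$ replaced by $A^Tp=a_k$ and $p^Tb=b_k$. The key algebraic fact I will exploit repeatedly is that $p\in Q_{A,a_k}^*$ yields
\[\sum_{i\in I_p^*}p_ia_i^Tx = (A^Tp)^Tx = a_k^Tx \quad\forall\,x\in\R^d,\]
and in particular $p^Tb = \sum_{i\in I_p^*}p_ib_i$ can be compared with $a_k^Tx$.

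For the forward direction, I would take $x\in Q_{A,b}^k$, which gives $a_k^Tx=b_k$ and $a_i^Tx\le b_i$ for every $i$. Substituting into the identity above and using $b^Tp=b_k$ yields
\[\sum_{i\in I_p^*}p_i(b_i-a_i^Tx)=b^Tp-a_k^Tx=b_k-b_k=0.\]
Because each summand is nonnegative (since $p_i>0$ and $a_i^Tx\le b_i$ on $I_p^*$), every summand vanishes, forcing $a_i^Tx=b_i$ for all $i\in I_p^*$, i.e., $x\in Q_{A,b}^{I_p^*}$.

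For the converse, I first need to know $Q_{A,b}^k$ is nonempty, which is precisely what $b\in\mc{C}_A$ provides: there exists $x_k\in Q_{A,b}$ with $a_k^Tx_k=b_k$. The inclusion hypothesis then gives $a_i^Tx_k=b_i$ for every $i\in I_p^*$, and one computes
\[b^Tp=\sum_{i\in I_p^*}p_ib_i=\sum_{i\in I_p^*}p_ia_i^Tx_k=(A^Tp)^Tx_k=a_k^Tx_k=b_k,\]
which finishes the equivalence.

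There is no real obstacle here; the only thing that could trip one up is remembering to invoke $b\in\mc{C}_A$ in the converse direction to guarantee a point of $Q_{A,b}^k$ exists, since without it the inclusion $Q_{A,b}^k\subset Q_{A,b}^{I_p^*}$ would hold vacuously and the conclusion $b^Tp=b_k$ could fail. The structural analogy with Proposition \ref{almostgone} makes the computation almost mechanical once one notes that $k\notin I_p^*$ prevents $a_k^Tx=b_k$ from being absorbed into the sum over $I_p^*$ in a degenerate way.
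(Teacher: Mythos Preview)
Your proof is correct and follows essentially the same route as the paper: the first assertion is handled by Lemma \ref{extremal:p}, the forward direction uses $\sum_{i\in I_p^*}p_i(b_i-a_i^Tx)=b^Tp-a_k^Tx=0$ with nonnegative summands (the paper isolates one index at a time, but the argument is the same), and the converse is verbatim the paper's computation using a witness $x\in Q_{A,b}^k$ furnished by $b\in\mc{C}_A$.
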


\begin{proof}
We have $k\notin I_p^*$ by Lemma \ref{extremal:p}.

Assume that $p^Tb=b_k$ holds.
Then for any $i\in I_p^*$ and $x\in Q_{A,b}^k$, we get
\begin{align*}
-p_ia_i^Tx &= p^TAx-a_k^Tx-p_ia_i^Tx 
= \sum_{j\in I_p^*}p_ja_j^Tx-b_k-p_ia_i^Tx\\
&= \sum_{j\in I_p^*\setminus\{i\}}p_ja_j^Tx-b_k 
\le \sum_{j\in I_p^*\setminus\{i\}}p_jb_j-b_k 
=p^Tb-p_ib_i-b_k=-p_ib_i,
\end{align*}
so  $a_i^Tx\ge b_i$.
Conversely, let $Q_{A,b}^k\subset Q_{A,b}^{I_p^*}$.
Since $b\in\mc{C}_A$, there exists some $x\in Q_{A,b}^k$, and we find
\[p^Tb=\sum_{i\in I_p^*}p_ib_i=\sum_{i\in I_p^*}p_ia_i^Tx=(A^Tp)^Tx=a_k^Tx=b_k.\]
\end{proof}

Alternatively, we can describe this situation from an algebraic perspective:
If one of the inequalities in \eqref{touching} is an equality, then the
corresponding condition $a_k^Tx\le b_k$ is redundant in the definition of $Q_{A,b}$.

\begin{proposition}
Let $b\in\mc{C}_A$, and let $p\in\ext(Q_{A,a_k}^*)\setminus\{e_k\}$.
Then $k\notin I_p^*$, and $b^Tp=b_k$ holds if and only if  
$a_i^Tx\le b_i$ for all $i\in I_p^*$ implies $a_k^Tx\le b_k$.
\end{proposition}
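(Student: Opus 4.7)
The plan is to establish the two parts of the statement. First, $k \notin I_p^*$ follows immediately from Lemma \ref{extremal:p}, since $p \in \ext(Q_{A,a_k}^*) \setminus \{e_k\}$ forces $p_k = 0$. For the equivalence, I would prove each direction separately, and the forward direction is essentially algebraic while the backward direction relies on Farkas' lemma together with the linear-independence property of extremal points.

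For the forward direction, assume $b^Tp = b_k$. The identity $A^Tp = a_k$ means $a_k = \sum_{i \in I_p^*} p_i a_i$. Given any $x$ with $a_i^Tx \le b_i$ for all $i \in I_p^*$, multiplying by the nonnegative weights $p_i$ and summing yields
\[a_k^Tx = \sum_{i \in I_p^*} p_i a_i^Tx \le \sum_{i \in I_p^*} p_i b_i = p^Tb = b_k,\]
as required.

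For the backward direction, assume the implication holds. The set $\{x : a_i^Tx \le b_i,\ i \in I_p^*\}$ contains $Q_{A,b}$, which is nonempty because $b \in \mc{C}_A$. Applying Proposition \ref{Farkas}b to the restricted system, there exists a nonnegative vector $\tilde p \in \R^N_+$ with $I_{\tilde p}^* \subset I_p^*$ satisfying $A^T\tilde p = a_k$ and $\tilde p^Tb \le b_k$. Hence $\tilde p \in Q_{A,a_k}^*$. The key step is to observe that, by Lemma \ref{vertices}b together with $p \in \ext(Q_{A,a_k}^*)$, the family $\{a_i : i \in I_p^*\}$ is linearly independent. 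Since $\sum_{i \in I_p^*} \tilde p_i a_i = a_k = \sum_{i \in I_p^*} p_i a_i$, linear independence forces $\tilde p = p$. Therefore $b^Tp = \tilde p^Tb \le b_k$, and combining this with the inequality $b^Tp \ge b_k$ furnished by Theorem \ref{characterisation} yields $b^Tp = b_k$.

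The main obstacle I anticipate is the backward direction, specifically the argument that the multiplier vector produced by Farkas coincides with $p$ itself rather than merely some other element of $Q_{A,a_k}^*$. Recognising that extremality of $p$ (via the linear independence of $\{a_i : i \in I_p^*\}$) pins down the representation uniquely is the decisive observation; everything else is bookkeeping.
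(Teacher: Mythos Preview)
Your proof is correct and follows essentially the same route as the paper's: the first claim via Lemma~\ref{extremal:p}, the forward implication by the direct weighted-sum computation (which the paper phrases as an appeal to Proposition~\ref{Farkas}b), and the backward implication via Farkas' lemma plus uniqueness of the multiplier. The only cosmetic difference is that where the paper invokes a lemma (Lemma~\ref{extremal:p}, though the text cites Corollary~\ref{unique:nonempty}) to conclude $\tilde p = p$, you reprove that uniqueness inline from the linear independence of $\{a_i : i \in I_p^*\}$.
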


\begin{proof}
Again, we have $k\notin I_p$ by Lemma \ref{extremal:p}.

If we assume that $b^Tp=b_k$ holds and that $a_i^Tx\le b_i$ for all $i\in I_p^*$, 
then $a_k^Tx\le b_k$ follows directly from $A^Tp=a_k$ and Proposition \ref{Farkas}b.

Conversely, assume that $a_i^Tx\le b_i$ for all $i\in I_p^*$ implies $a_k^Tx\le b_k$.
By Proposition \ref{Farkas}b, there exists $\tilde p\in Q_{A,a_k}^*$ with 
$I_{\tilde p}^*\subset I_p^*$ and $b^T\tilde p\le b_k$.
Corollary \ref{unique:nonempty} yields $\tilde p=p$, so $b^Tp\le b_k$,  
and $b\in\mc{C}_A$ implies $b^Tp\ge b_k$ by Theorem \ref{characterisation}.
\end{proof}

The correspondence between dimensionality and algebraic inequalities 
is more complicated for facets $Q_{A,b}^k$ than for the entire polyhedron $Q_{A,b}$.

\begin{corollary} \label{noface}
Let $b\in\mc{C}_A$, and let $p\in\ext(Q_{A,a_k}^*)\setminus\{e_k\}$.
Then $b^Tp=b_k$ implies $\dim(Q_{A,b}^k)\le d-2$.
\end{corollary}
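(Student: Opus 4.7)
The plan is to use Proposition \ref{degenerate:vertex} to contain $Q_{A,b}^k$ inside the affine flat cut out by the equalities indexed by $I_p^*$, and then to show that this flat has codimension at least $2$ in $\R^d$ by counting linearly independent normals.

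First, I would apply Proposition \ref{degenerate:vertex} directly: from $b^Tp=b_k$ it gives
\[Q_{A,b}^k\subset Q_{A,b}^{I_p^*}\subset H(A,b,I_p^*),\]
so $\dim(Q_{A,b}^k)\le\dim(H(A,b,I_p^*))=d-\rank(\{a_i:i\in I_p^*\})$. It therefore suffices to show that $\rank(\{a_i:i\in I_p^*\})\ge 2$.

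Next, I would use Lemma \ref{vertices}b. Since $p\in\ext(Q_{A,a_k}^*)$, the vectors $\{a_i:i\in I_p^*\}$ are linearly independent, so $\rank(\{a_i:i\in I_p^*\})=|I_p^*|$. It remains to rule out $|I_p^*|\le 1$. The case $|I_p^*|=0$ is impossible because it would force $A^Tp=0$, whereas $A^Tp=a_k\in S^{d-1}$. The case $|I_p^*|=1$, say $I_p^*=\{i_0\}$, would give $p_{i_0}a_{i_0}=a_k$ with $p_{i_0}>0$ and $\|a_{i_0}\|_2=\|a_k\|_2=1$, forcing $p_{i_0}=1$ and $a_{i_0}=a_k$; by the standing assumption that the rows of $A$ are pairwise distinct this means $i_0=k$, i.e.\ $p=e_k$, contradicting the hypothesis $p\neq e_k$.

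Hence $|I_p^*|\ge 2$, so $\rank(\{a_i:i\in I_p^*\})\ge 2$ and the bound $\dim(Q_{A,b}^k)\le d-2$ follows. There is no substantive obstacle here; the entire content of the corollary is packaged in Proposition \ref{degenerate:vertex} together with the observation that $p\in\ext(Q_{A,a_k}^*)\setminus\{e_k\}$ forces at least two linearly independent active normals among $\{a_i:i\in I_p^*\}$.
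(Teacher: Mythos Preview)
Your proof is correct and follows essentially the same approach as the paper: both use Proposition \ref{degenerate:vertex} to obtain $Q_{A,b}^k\subset Q_{A,b}^{I_p^*}\subset H(A,b,I_p^*)$, then Lemma \ref{vertices}b for linear independence of $\{a_i:i\in I_p^*\}$, and finally argue $\#I_p^*\ge 2$. The only cosmetic difference is that the paper first invokes Lemma \ref{extremal:p} to get $p_k=0$ and deduces $\#I_p^*\ge 2$ from that, whereas you argue the cardinality bound directly by excluding $|I_p^*|\in\{0,1\}$; the underlying reasoning is the same.
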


\begin{proof}
By Proposition \ref{degenerate:vertex}, the identity $b^Tp=b_k$ implies 
$Q_{A,b}^k\subset Q_{A,b}^{I_p^*}$.
Since $p\in\ext(Q_{A,a_k}^*)\setminus\{e_k\}$,
Lemma \ref{extremal:p} yields $p_k=0$, so $\#I_p^*\ge 2$.
By Lemma \ref{vertices}b, the vectors $\{a_i:i\in I_p^*\}$ are linearly independent.
Hence we conclude $\dim(Q_{A,b}^k)\le\dim(H(A,b,I_p^*))\le d-2$.
\end{proof}

The following statement is a semi-converse of Corollary \ref{noface}.

\begin{proposition} \label{ecafon}
Let $b\in\mc{C}_A$.
If $\dim(Q_{A,b}^k)\le d-2$, then $\dim(Q_{A,b})\le d-1$ or there exists 
$p\in\ext(Q_{A,a_k}^*)\setminus\{e_k\}$ with $b^Tp=b_k$.
\end{proposition}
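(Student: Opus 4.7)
The plan is to prove the contrapositive: assuming $b\in\mc{C}_A$ with $\dim(Q_{A,b})=d$ and $\dim(Q_{A,b}^k)\le d-2$, I will construct a vertex $p\in\ext(Q_{A,a_k}^*)\setminus\{e_k\}$ with $b^Tp=b_k$. The workhorse observation is the following ``no-flatness'' fact: by Proposition \ref{flat}, $\dim(Q_{A,b})=d$ forces $b^Tp>0$ for every $p\in\ext(Q_{A,0}^\diamond)$, and since $Q_{A,0}^\diamond$ is a bounded polytope whose points are convex combinations of its vertices, this upgrades to $b^Tr>0$ for every nonzero $r\in Q_{A,0}^*$ (after normalizing via $r/\mathbbm{1}^Tr$). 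I will use this repeatedly to rule out degenerate alternatives.

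Since $b\in\mc{C}_A$ makes $Q_{A,b}^k$ nonempty, its hypothesised codimension of at least two yields some $c\in\R^d$ linearly independent from $a_k$ and some $\gamma\in\R$ with $c^Tx=\gamma$ for all $x\in Q_{A,b}^k$. Rewriting $a_k^Tx=b_k$ as a pair of opposing inequalities, I apply Theorem \ref{strong:duality} to both $\max\{c^Tx:x\in Q_{A,b},\,a_k^Tx=b_k\}=\gamma$ and $\max\{-c^Tx:x\in Q_{A,b},\,a_k^Tx=b_k\}=-\gamma$. Summing the two dual certificates produces $P\in\R^N_+$ and $s\in\R$ with
\[A^TP+sa_k=0,\qquad b^TP+sb_k=0.\]
I then split on the sign of $s$. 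If $s\ge 0$, the vector $Q:=P+se_k\in\R^N_+$ lies in $Q_{A,0}^*$ with $b^TQ=0$; a quick check (if $s>0$ then $Q_k\ge s>0$; if $s=0$ then $P=0$ would force $c=qa_k$, contradicting linear independence) gives $Q\ne 0$, contradicting the no-flatness observation.

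Hence $s<0$, and $\bar P:=P/(-s)$ is an element of the face $F_k:=\{p\in Q_{A,a_k}^*:b^Tp=b_k\}$ with $\bar P\ne e_k$ (otherwise $P=-se_k$ would again force $c$ to be a scalar multiple of $a_k$). The recession cone of $F_k$ is $Q_{A,0}^*\cap\{r:b^Tr=0\}=\{0\}$ by no-flatness, so $F_k$ is a nonempty bounded polyhedron and therefore has at least one vertex; and vertices of a face of a polyhedron are vertices of the polyhedron itself. If $e_k$ were the only vertex, $F_k$ would collapse to $\{e_k\}$, contradicting $\bar P\in F_k\setminus\{e_k\}$; so the desired $\tilde p\in\ext(Q_{A,a_k}^*)\setminus\{e_k\}$ with $b^T\tilde p=b_k$ exists. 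The main obstacle I anticipate is the bookkeeping in the duality step (keeping the equality's dual variable unrestricted while the inequality duals stay non-negative) and the clean verification that $\bar P\ne e_k$; once these are in hand, everything else follows from the no-flatness observation combined with standard facts about faces of polyhedra.
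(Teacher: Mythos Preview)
Your proof is correct and follows essentially the same route as the paper: obtain a direction $c$ independent of $a_k$ on which $Q_{A,b}^k$ is flat, apply duality to both $\pm c$, sum the certificates to get $A^TP+sa_k=0$ and $b^TP+sb_k=0$, and split into a case yielding a nonzero element of $Q_{A,0}^*$ annihilated by $b$ (forcing $\dim Q_{A,b}\le d-1$) versus a case yielding a point of $Q_{A,a_k}^*\setminus\{e_k\}$ with $b^Tp=b_k$. The one noteworthy difference is the endgame: the paper explicitly zeroes out the $k$-th coordinate and decomposes the resulting point as $\conv(\ext(Q_{A,a_k}^*))+Q_{A,0}^*$ to extract a vertex $\neq e_k$, whereas you argue more structurally that the optimal face $F_k=\{p\in Q_{A,a_k}^*:b^Tp=b_k\}$ is bounded (via your no-flatness observation) and hence, containing $\bar P\neq e_k$, must have a vertex other than $e_k$---a slightly cleaner packaging of the same idea.
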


\begin{proof}
If $\dim(Q_{A,b}^k)\le d-2$, then there exist $\alpha\in\R$ and $c\in\R^d$ 
with $c\neq 0$ such that $\{a_k,c\}$ are linearly independent and $c^Tx=\alpha$ holds 
for all $x\in Q_{A,b}^k$.
Applying Proposition \ref{Farkas}b to the inequalities 
\[c^Tx\le\alpha\ \forall\,x\in Q_{A,b}^k,\quad (-c)^Tx\le-\alpha\ \forall\,x\in Q_{A,b}^k,\]
yields $q,\tilde q\in\R^N_+$ and $t,\tilde t\in\R_+$ with 
\begin{align*}
&A^Tq-ta_k=c,&& b^Tq-tb_k\le\alpha,\\
&A^T\tilde q-\tilde ta_k=-c,&& b^T\tilde q-\tilde tb_k\le-\alpha.
\end{align*}
Since $\{a_k,c\}$ are linearly independent, 
there exist $\ell,\tilde\ell\in\{1,\ldots,N\}\setminus\{k\}$ with $q_\ell>0$ and 
$\tilde{q}_{\tilde\ell}>0$.
Setting $\hat p:=q+\tilde q$ and $s:=t+\tilde t$, we obtain
$\hat{p}\in\R^N_+$, $\hat{p}_\ell,\hat{p}_{\tilde\ell}>0$, $s\ge 0$ and 
\begin{equation} \label{two:ways}
A^T\hat p=sa_k,\quad b^T\hat p\le sb_k.
\end{equation}

\emph{Case 1:} If $\hat{p}_k\ge s$, define $\bar{p}:=\hat{p}-se_k$.
Then $\bar{p}\in\R^N_+\setminus\{0\}$, and from statement \eqref{two:ways} we have
$A^T\bar{p}=0$ and $b^T\bar{p}\le 0$.
In particular, we have $\frac{\bar{p}}{\mathbbm{1}^T\bar{p}}\in Q_{A,0}^\diamond$,
and Theorem \ref{characterisation} yields $b^T\frac{\bar{p}}{\mathbbm{1}^T\bar{p}}=0$.
Since $Q_{A,0}^\diamond$ is a bounded polytope, Proposition \ref{flat}
yields $\dim(Q_{A,b})\le d-1$.

\emph{Case 2:} If $\hat{p}_k<s$, define $\bar{p}:=\hat{p}-\hat{p}_ke_k$.
Then $\bar{p}\in\R^N_+\setminus\{0\}$ and $\bar{p}_k=0$, and statement
\eqref{two:ways} yields
\begin{equation} \label{loco}
A^T\frac{\bar{p}}{s-\hat{p}_k}=a_k,\quad b^T\frac{\bar{p}}{s-\hat{p}_k}\le b_k.
\end{equation}
In particular, we have $\frac{\bar{p}}{s-\hat{p}_k}\in Q_{A,a_k}^*$.
As in the proof of Theorem \ref{characterisation}, we can write 
$\frac{\bar{p}}{s-\hat{p}_k}=v+w$ with $v\in\conv(\ext(Q_{A,a_k}^*))$ and
$w\in Q_{A,0}^*$.
Since $\bar{p}_k=0$ and $v,w\ge 0$, we have $v_k=0$.
Denote $\ext(Q_{A,a_k}^*)\setminus\{e_k\}=\{f^{k,1},\ldots,f^{k,m_k}\}$, and
let $\lambda\in\R^{m_k}_+$ and $\mu\ge 0$ with $\mathbbm{1}^T\lambda+\mu=1$ 
and $v=\sum_{j=1}^{m_k}\lambda_jf^{k,j}+\mu e_k$.
Then $v_k=0$ and $f^{k,j}\ge 0$ for all $j\in\{1,\ldots,m_k\}$ force $\mu=0$, 
so we have 
\begin{equation} \label{ek:does:not:matter}
\sum_{j=1}^{m_k}\lambda_j=1,\quad v=\sum_{j=1}^{m_k}\lambda_jf^{k,j}.
\end{equation}
By Theorem \ref{characterisation} and by statement \eqref{loco}, we have 
\[\sum_{j=1}^{m_k}\lambda_jb^Tf^{k,j}=b^Tv\le b^Tv+b^Tw=b^T(v+w)\le b_k.\]
But Theorem \ref{characterisation} also guarantees $b^Tf^{k,j}\ge b_k$
for all $j\in\{1,\ldots,m_k\}$, which implies $b^Tf^{k,j}=b_k$ 
for every $j\in\{1,\ldots,m_k\}$ with $\lambda_j>0$.
By statement \eqref{ek:does:not:matter}, there exists at least one such $j$,
which completes the proof.
\end{proof}

Now we characterize the polyhedra $Q_{A,b}$, which correspond to interior points
of $\mc{C}_A$.
Recall the definition of the matrix $F$ from Corollary \ref{matrix:representation}.

\begin{theorem} \label{inner:points}
We have $b\in\interior\mc{C}_A$ if and only if 
\begin{equation} \label{int:char}
\dim Q_{A,b}=d,\quad\dim Q_{A,b}^k=d-1\quad\forall\,k\in\{1,\ldots,N\}.
\end{equation} 
\end{theorem}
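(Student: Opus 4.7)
The plan is to reduce everything to the explicit description of $\interior(\mc{C}_A)$ provided by Theorem \ref{interior:bigger}, namely $\interior(\mc{C}_A) = \{b\in\R^N : F^Tb>0\}$. Unpacking this, $b\in\interior(\mc{C}_A)$ is equivalent to the two strict inequality systems
\[
b^Tp>0\ \forall p\in\ext(Q_{A,0}^\diamond),\qquad
b^Tp>b_k\ \forall k,\ \forall p\in\ext(Q_{A,a_k}^*)\setminus\{e_k\},
\]
which are exactly the inequalities whose \emph{equality} cases have been related to dimensional collapses of $Q_{A,b}$ and $Q_{A,b}^k$ in Propositions \ref{flat}, \ref{ecafon}, and Corollary \ref{noface}. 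So the proof amounts to matching these equality cases with the two dimension conditions in \eqref{int:char}.

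For the forward direction, suppose $b\in\interior(\mc{C}_A)$. Proposition \ref{flat} states that $\dim Q_{A,b}\le d-1$ if and only if some $p\in\ext(Q_{A,0}^\diamond)$ satisfies $b^Tp=0$; the strict inequality rules this out, so $\dim Q_{A,b}=d$. For each $k$, suppose towards a contradiction that $\dim Q_{A,b}^k\le d-2$. Proposition \ref{ecafon} then forces either $\dim Q_{A,b}\le d-1$, which was just excluded, or the existence of some $p\in\ext(Q_{A,a_k}^*)\setminus\{e_k\}$ with $b^Tp=b_k$, which contradicts the second family of strict inequalities. Hence $\dim Q_{A,b}^k=d-1$, noting that $\dim Q_{A,b}^k\le d-1$ always holds because $Q_{A,b}^k$ lies in the hyperplane $\{a_k^Tx=b_k\}$.

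For the converse, assume \eqref{int:char}. Since $\dim Q_{A,b}^k=d-1>-\infty$, each facet is nonempty, so there exists $x\in Q_{A,b}$ with $a_k^Tx=b_k$ for every $k$; this means $b\in\mc{C}_A$, and Theorem \ref{characterisation} already supplies the weak inequalities $b^Tp\ge 0$ and $b^Tp\ge b_k$. Now assume for contradiction that $b^Tp=0$ for some $p\in\ext(Q_{A,0}^\diamond)$: Proposition \ref{flat} yields $\dim Q_{A,b}\le d-1$, contradicting $\dim Q_{A,b}=d$. Assume next that $b^Tp=b_k$ for some $p\in\ext(Q_{A,a_k}^*)\setminus\{e_k\}$: Corollary \ref{noface} yields $\dim Q_{A,b}^k\le d-2$, contradicting the facet hypothesis. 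Hence all inequalities are strict, i.e., $F^Tb>0$, so Theorem \ref{interior:bigger} gives $b\in\interior(\mc{C}_A)$.

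There is no real obstacle here; the theorem is essentially a bookkeeping corollary of the machinery already built in Sections \ref{charsec} and \ref{intptsec}. The only small subtlety is guaranteeing $b\in\mc{C}_A$ in the converse direction so that Theorem \ref{characterisation} is applicable, but this follows immediately from non-emptiness of each facet $Q_{A,b}^k$ implied by the assumed facet dimensions.
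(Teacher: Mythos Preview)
Your proof is correct and follows essentially the same route as the paper's: both reduce to Theorem~\ref{interior:bigger} and then invoke Propositions~\ref{flat}, \ref{ecafon} and Corollary~\ref{noface} for the two directions. Your version is in fact slightly more careful, since you explicitly verify $b\in\mc{C}_A$ in the converse direction (needed because Proposition~\ref{flat} and Corollary~\ref{noface} carry this hypothesis), whereas the paper leaves this implicit.
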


\begin{proof}
According to Theorem \ref{interior:bigger}, we have $b\in\interior\mc{C}_A$
if and only if $F^Tb>0$.
If $F^Tb>0$, then Propositions \ref{flat} and \ref{ecafon} imply \eqref{int:char}.
Conversely, if condition \eqref{int:char} holds, then Proposition \ref{flat}
and Corollary \ref{noface} imply $F^Tb>0$.
\end{proof}

%

\subsection{Approximation properties} \label{approxsec}

First, we introduce a projector from $\mc{K}_c(\R^d)$ to $\mc{G}_A$.

\begin{proposition} \label{projectorprop}
Let $\phi$ and $L_A$ as in Section \ref{coor1}.
The mapping 
\begin{align*}
P_{\mc{C}_A}:\mc{K}_c(\R^d)\to\mc{C}_A,\quad 
P_{\mc{C}_A}(C):=(\sigma_C(a_1),\ldots,\sigma_C(a_N))
\end{align*}
is well-defined and $1$-Lipschitz from $(K_c(\R^d),\dist_H)$ to $(\R^N,\|\cdot\|_\infty)$
with 
\[P_{\mc{C}_A}(C)\le P_{\mc{C}_A}(\tilde C)\quad\forall\,C,\tilde C\in\mc{K}_c(\R^d)\ 
\text{with}\ C\subset\tilde C,\]
and the mapping
\begin{align*}
P_{\mc{G}_A}:\mc{K}_c(\R^d)\to\mc{G}_A,\quad 
P_{\mc{G}_A}(C):=\phi(P_{\mc{C}_A}(C))
\end{align*}
is an $L_A$-Lipschitz projector 
from $(K_c(\R^d),\dist_H)$ onto $(\mc{G}_A,\dist_H)$ with
\[P_{\mc{G}_A}(C)\subset P_{\mc{G}_A}(\tilde C)\quad\forall\,C,\tilde C\in\mc{K}_c(\R^d)\ 
\text{with}\ C\subset\tilde C.\]
\end{proposition}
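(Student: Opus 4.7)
The plan is to first handle $P_{\mc{C}_A}$ in full, then lift every property to $P_{\mc{G}_A}$ via the homeomorphism $\phi$ from Theorem \ref{phi}. I will organise the argument in three blocks: well-definedness, Lipschitz continuity, and the projector/monotonicity properties.

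To verify that $P_{\mc{C}_A}$ takes values in $\mc{C}_A$, fix $C\in\mc{K}_c(\R^d)$ and set $b_i:=\sigma_C(a_i)$. Since $C$ is compact, each supremum is attained at some $x_i\in C$, and for every $j$ we get $a_j^Tx_i\le\sigma_C(a_j)=b_j$, so $x_i\in Q_{A,b}$ with $a_i^Tx_i=b_i$. This is exactly the condition defining $\mc{C}_A$.

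The 1-Lipschitz bound is then immediate from the second identity in Lemma \ref{support:Lipschitz}: since every $a_i$ lies on $S^{d-1}$,
\[\|P_{\mc{C}_A}(C)-P_{\mc{C}_A}(\tilde C)\|_\infty
=\max_{i}|\sigma_C(a_i)-\sigma_{\tilde C}(a_i)|
\le\sup_{x\in S^{d-1}}|\sigma_C(x)-\sigma_{\tilde C}(x)|
=\dist_H(C,\tilde C).\]
Monotonicity of $P_{\mc{C}_A}$ with respect to inclusion is just the monotonicity of the support function: $C\subset\tilde C$ implies $\sigma_C(a_i)\le\sigma_{\tilde C}(a_i)$ componentwise.

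For the statements about $P_{\mc{G}_A}=\phi\circ P_{\mc{C}_A}$, I would invoke Theorem \ref{phi}: $\phi$ is $L_A$-Lipschitz, so the composition is $L_A$-Lipschitz from $(\mc{K}_c(\R^d),\dist_H)$ to $(\mc{G}_A,\dist_H)$. The projector property is the only point that needs a small argument: given $C=Q_{A,b}\in\mc{G}_A\cap\mc{K}_c(\R^d)$ with $b\in\mc{C}_A$, the defining property of $\mc{C}_A$ gives $x_i\in Q_{A,b}$ with $a_i^Tx_i=b_i$, so $\sigma_{Q_{A,b}}(a_i)=b_i$ for every $i$, hence $P_{\mc{C}_A}(C)=b$ and $P_{\mc{G}_A}(C)=\phi(b)=Q_{A,b}=C$. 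Finally, monotonicity transfers: $C\subset\tilde C$ yields $P_{\mc{C}_A}(C)\le P_{\mc{C}_A}(\tilde C)$ componentwise, and the implication $b\le\tilde b\Rightarrow Q_{A,b}\subset Q_{A,\tilde b}$ is immediate from the definition of $Q_{A,\cdot}$.

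I do not foresee a genuine obstacle in this proof; essentially every piece is either an application of Lemma \ref{support:Lipschitz}, of Theorem \ref{phi}, or of the definition of $\mc{C}_A$. The only place one has to be slightly careful is the projector identity, where one must remember that $b\in\mc{C}_A$ is precisely what guarantees $\sigma_{Q_{A,b}}(a_i)=b_i$ (rather than some strict inequality), so that $P_{\mc{C}_A}$ returns $b$ and not merely an upper bound for it.
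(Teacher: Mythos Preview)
Your proof is correct and follows essentially the same route as the paper's: both verify $P_{\mc{C}_A}(C)\in\mc{C}_A$ by exhibiting maximizers $x_i\in C\subset Q_{A,b}$, invoke Lemma~\ref{support:Lipschitz} for the $1$-Lipschitz bound, and lift everything to $P_{\mc{G}_A}$ via the $L_A$-Lipschitz continuity of $\phi$ from Theorem~\ref{phi}. You are in fact slightly more thorough, since you spell out the monotonicity arguments that the paper's proof leaves implicit.
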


\begin{proof}
If $C\in\mc{K}_c(\R^d)$,
then for all $k\in\{1,\ldots,N\}$, there exists $x_k\in C$ with
\[a_k^Tx_k=\sup_{x\in C}a_k^Tx=\sigma_C(a_k),\quad 
a_l^Tx_k\le\sigma_C(a_l)\ \forall\,\ell\in\{1,\ldots,N\}.\]
In particular $x_k\in Q_{A,P_{\mc{C}_A}(C)}$ for all $k\in\{1,\ldots,N\}$,
so $P_{\mc{C}_A}(C)\in\mc{C}_A$, and the mapping $P_{\mc{C}_A}$ is well-defined.

It follows from Lemma \ref{support:Lipschitz} and $\|a_k\|_2=1$ for
$k\in\{1,\ldots,N\}$ that
\[\|P_{\mc{C}_A}(C)-P_{\mc{C}_A}(\tilde{C})\|_\infty\le\dist_H(C,\tilde{C})
\quad\forall\,C,\tilde C\in\mc{K}_c(\R^d).\]
Since $\phi$ is $L_A$-Lipschitz according to Theorem \ref{phi}, so is $P_{\mc{G}_A}$.
By construction,
\[P_{\mc{G}_A}(Q_{A,b})=\phi(P_{\mc{C}_A}(Q_{A,b}))
=\phi(b)=Q_{A,b}\quad\forall\,b\in\mc{C}_A,\]
so $P_{\mc{G}_A}$ is indeed a projector from $\mc{K}_c(\R^d)$ onto $\mc{G}_A$.
\end{proof}

Now we investigate the quality of the approximation of $\mc{K}_c(\R^d)$ by $\mc{G}_A$.
Theorem \ref{oldapprox}, originally proved in \cite{Rieger:11}, provides a measure 
in terms of the metric density
\[\delta_A:=\sup_{c\in S^{d-1}}\min\{\|c-a_k\|_2:k=1,\ldots,N\}\]
of the vectors $\{a_k:k=1,\ldots,N\}$ in the sphere $S^{d-1}$.
The assumption $\delta_A\in(0,1)$ is only restrictive when working
with very coarse spaces $\mc{G}_A$ in high-dimensional ambient spaces $\R^d$.

\begin{theorem} \label{oldapprox}
Let $\mc{G}_A$ be a space of polytopes (see Theorem \ref{alternative}), 
and let $\delta_A\in(0,1)$.
Then for every $C\in\mc{K}_c(\R^d)$, 
we have $C\subset P_{\mc{G}_A}(C)$ and
\[\dist(P_{\mc{G}_A}(C),C)\le\frac{2-\delta_A}{1-\delta_A}\delta_A\|C\|_2.\]
\end{theorem}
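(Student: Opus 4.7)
The plan is to work entirely with support functions. Write $b:=P_{\mc{C}_A}(C)\in\mc{C}_A$, so that $P_{\mc{G}_A}(C)=Q_{A,b}$ and $b_k=\sigma_C(a_k)$ for $k=1,\dots,N$. The inclusion $C\subset Q_{A,b}$ is immediate: any $x\in C$ satisfies $a_k^Tx\le\sigma_C(a_k)=b_k$ for every $k$. Since $Q_{A,b}$ is compact by Theorem \ref{alternative}, the inclusion $C\subset Q_{A,b}$ implies $\dist(Q_{A,b},C)=\dist_H(Q_{A,b},C)$, and by Lemma \ref{support:Lipschitz} the latter equals $\sup_{x\in S^{d-1}}(\sigma_{Q_{A,b}}(x)-\sigma_C(x))$.

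The key observation is that $b\in\mc{C}_A$ forces $\sigma_{Q_{A,b}}(a_k)=b_k=\sigma_C(a_k)$. For an arbitrary $x\in S^{d-1}$, choose $k$ with $\|x-a_k\|_2\le\delta_A$ (this is possible by definition of $\delta_A$), and pick $y^\ast\in Q_{A,b}$ attaining $\sigma_{Q_{A,b}}(x)$. Writing
\[\sigma_{Q_{A,b}}(x)-\sigma_C(x)=a_k^Ty^\ast+(x-a_k)^Ty^\ast-\sigma_C(x)\le b_k-\sigma_C(x)+\delta_A\|y^\ast\|_2\]
and using Lemma \ref{support:Lipschitz} to bound $b_k-\sigma_C(x)=\sigma_C(a_k)-\sigma_C(x)\le\|C\|_2\delta_A$, I obtain
\[\sigma_{Q_{A,b}}(x)-\sigma_C(x)\le\delta_A(\|C\|_2+\|Q_{A,b}\|_2).\]

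The last step, which I expect to be the crucial one for obtaining exactly the stated constant $\frac{2-\delta_A}{1-\delta_A}$ (rather than the weaker $\frac{2}{1-\delta_A}$), is a self-bootstrapping bound on $\|Q_{A,b}\|_2$. For any $y\in Q_{A,b}$ and any direction $x\in S^{d-1}$, pick again $k$ with $\|x-a_k\|_2\le\delta_A$; then
\[x^Ty=a_k^Ty+(x-a_k)^Ty\le b_k+\delta_A\|y\|_2\le\max_j b_j+\delta_A\|y\|_2,\]
and since $\max_j b_j=\max_j\sigma_C(a_j)\le\|C\|_2$, taking the supremum over $x\in S^{d-1}$ gives $\|y\|_2\le\|C\|_2+\delta_A\|y\|_2$, hence $\|Q_{A,b}\|_2\le\|C\|_2/(1-\delta_A)$. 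Inserting this into the previous display yields
\[\dist(Q_{A,b},C)\le\delta_A\Bigl(\|C\|_2+\frac{\|C\|_2}{1-\delta_A}\Bigr)=\frac{(2-\delta_A)\delta_A}{1-\delta_A}\|C\|_2,\]
which is the claimed estimate. The main obstacle is precisely this tight estimate of $\|Q_{A,b}\|_2$: the naive bound based on $\sigma_{Q_{A,b}}(x)\le\sigma_C(x)+\delta_A(\|C\|_2+\|Q_{A,b}\|_2)$ overestimates $\sigma_C(x)$ by $\|C\|_2$ and loses an additive $\delta_A\|C\|_2$ in the final constant, whereas the direct bootstrap above avoids that loss.
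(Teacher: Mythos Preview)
Your proof is correct and follows essentially the same approach as the paper: both establish the inclusion $C\subset P_{\mc{G}_A}(C)$, then bound the support-function gap at an arbitrary unit direction by splitting through a nearby $a_k$, and both obtain the sharp constant via the self-bootstrapping estimate $\|Q_{A,b}\|_2\le\|C\|_2/(1-\delta_A)$. The only cosmetic difference is that you bootstrap on individual points $y\in Q_{A,b}$ while the paper bootstraps directly on $\|\sigma_{P_{\mc{G}_A}(C)}\|_\infty$; the two are equivalent.
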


\begin{proof}
The inclusion $C\subset P_{\mc{G}_A}(C)$ holds by construction of $P_{\mc{G}_A}$,
and since $\mc{G}_A$ is a space of polytopes, Lemma \ref{support:Lipschitz} 
applied with $\tilde{C}=\{0\}$ yields 
\[\|\sigma_{P_{\mc{G}_A}(C)}\|_\infty=\|P_{\mc{G}_A}(C)\|_2<\infty.\]
Let $x\in P_{\mc{G}_A}(C)$ and $c\in S^{d-1}$.
By assumption, there exists $k\in\{1,\ldots,N\}$ with $\|c-a_k\|_2\le\delta_A$, 
so, again by Lemma \ref{support:Lipschitz}, we have
\begin{align*}
c^Tx 
&=(c-a_k)^Tx+a_k^Tx\\
&\le\|c-a_k\|_2\,\|x\|_2+\|\sigma_C\|_\infty
\le\delta_A\|\sigma_{P_{\mc{G}_A}(C)}\|_\infty+\|\sigma_C\|_\infty.
\end{align*}
Since $x$ and $c$ were arbitrary, we have
\[\|\sigma_{P_{\mc{G}_A}(C)}\|_\infty
\le\delta_A\|\sigma_{P_{\mc{G}_A}(C)}\|_\infty+\|\sigma_C\|_\infty,\]
so that
\begin{equation} \label{inter}
\|\sigma_{P_{\mc{G}_A}(C)}\|_\infty\le\frac{1}{1-\delta_A}\|\sigma_C\|_\infty.
\end{equation}
Again, let $x\in P_{\mc{G}_A}(C)$, $c\in S^{d-1}$ and 
$k\in\{1,\ldots,N\}$ with $\|c-a_k\|_2\le\delta_A$.
Using inequality \eqref{inter}, Lemma \ref{support:Lipschitz} and  
$\sigma_C(a_k)=\sigma_{P_{\mc{G}_A}(C)}(a_k)$, we obtain
\begin{align*}
&c^Tx-\sigma_C(c)
=(c-a_k)^Tx+a_k^Tx-\sigma_C(c)\\
&\le\|c-a_k\|_2\,\|x\|_2+\sigma_C(a_k)-\sigma_C(c)
\le\delta_A\|\sigma_{P_{\mc{G}_A}(C)}\|_\infty+\delta_A\|C\|_2\\
&\le\frac{\delta_A}{1-\delta_A}\|\sigma_C\|_\infty+\delta_A\|C\|_2
=\frac{2-\delta_A}{1-\delta_A}\delta_A\|C\|_2.
\end{align*}
Since $x$ and $c$ were arbitrary, it follows from Lemma \ref{support:Lipschitz} that
\[\dist_H(P_{\mc{G}_A}(C),C)
=\|\sigma_{P_{\mc{G}_A}(C)}-\sigma_C\|_\infty
\le\frac{2-\delta_A}{1-\delta_A}\delta_A\|C\|_2.\]
\end{proof}

While the number $\delta_A$ only measures metric density, the quantity
\[\kappa_A:=\sup_{c\in S^{d-1}}\,
\inf\Big\{\sum_{k\in I_p^*}p_k\|a_k-\frac{c}{\|p\|_1}\|_2:p\in Q_{A,c}^*\Big\}\]
encodes the geometry of the matrix $A$.
Let us first check that it is well-defined when $\mc{G}_A$ is a space of bounded
polytopes and the geometry of $A$ is sufficiently rich.
For the interpretation of the following proposition, 
note that $\lim_{\rho\nearrow 1}\sqrt{(2-2\rho)/\rho}=0$.

\begin{proposition} 
Assume that there exists 
$\rho\in(0,1)$ such that for every $c\in S^{d-1}$, there is $p\in Q_{A,c}^*$ 
with $\min_{i,j\in I_p^*}a_i^Ta_j\ge\rho$.
Then
\[\kappa_A\in[0,\sqrt{\frac{2-2\rho}{\rho}}].\]
\end{proposition}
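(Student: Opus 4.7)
The plan is to fix an arbitrary $c\in S^{d-1}$, use the hypothesis to pick a witness $p\in Q_{A,c}^*$ with $\min_{i,j\in I_p^*}a_i^Ta_j\ge\rho$, and then show that the inner sum defining $\kappa_A$ evaluated at this specific $p$ is bounded above by $\sqrt{(2-2\rho)/\rho}$. Since the infimum is no larger than the value at our chosen $p$ and the estimate is uniform in $c$, the result will follow immediately, and $\kappa_A\ge 0$ is trivial from the definition.

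The first key step is to bound $\|p\|_1$. Since $A^Tp=c$ and $\|c\|_2=1$, I would expand
\[1=c^Tc=c^T\sum_{k\in I_p^*}p_ka_k=\sum_{k\in I_p^*}p_k(a_k^Tc).\]
For each $k\in I_p^*$, the inner product satisfies
\[a_k^Tc=\sum_{j\in I_p^*}p_j(a_k^Ta_j)\ge p_k+\rho\sum_{j\in I_p^*\setminus\{k\}}p_j\ge\rho\|p\|_1,\]
where the first inequality uses $a_k^Ta_k=1$ together with the hypothesis, and the second uses $p_k\ge 0$ and $\rho\le 1$. Substituting back gives $1\ge\rho\|p\|_1^2$, hence $\|p\|_1\le 1/\sqrt{\rho}$.

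The second step is a pointwise estimate for the distance $\|a_k-c/\|p\|_1\|_2$. Writing $\bar a:=c/\|p\|_1$, one has $\|\bar a\|_2=1/\|p\|_1$, and since $\|p\|_1\ge\|c\|_2=1$ (obvious from $c=\sum p_ka_k$ with $\|a_k\|_2=1$), we have $\|\bar a\|_2\le 1$. Combining this with the inner product lower bound $a_k^T\bar a=a_k^Tc/\|p\|_1\ge\rho$ derived above yields
\[\|a_k-\bar a\|_2^2=1-2a_k^T\bar a+\|\bar a\|_2^2\le 1-2\rho+1=2-2\rho.\]

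The third step is the synthesis: using the uniform pointwise bound and then the bound on $\|p\|_1$,
\[\sum_{k\in I_p^*}p_k\|a_k-\bar a\|_2\le\|p\|_1\sqrt{2-2\rho}\le\sqrt{\tfrac{2-2\rho}{\rho}}.\]
Taking the infimum over all feasible $p\in Q_{A,c}^*$ preserves this bound, and taking the supremum over $c\in S^{d-1}$ establishes $\kappa_A\le\sqrt{(2-2\rho)/\rho}$. I do not anticipate a real obstacle; the only subtlety is recognising that the hypothesis on pairwise inner products simultaneously controls both $\|p\|_1$ (through an energy-type identity for $\|c\|_2^2$) and the pointwise deviations of the $a_k$ from their weighted centre.
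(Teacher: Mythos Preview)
Your proof is correct and follows essentially the same route as the paper's: both establish $\|p\|_1\le 1/\sqrt{\rho}$ from the identity $1=\|c\|_2^2=\sum_{i,j}p_ip_ja_i^Ta_j\ge\rho\|p\|_1^2$, bound each term $\|a_k-c/\|p\|_1\|_2$ by $\sqrt{2-2\rho}$ using $a_k^Tc\ge\rho\|p\|_1$ together with $\|p\|_1\ge 1$, and then combine. The only differences are cosmetic in how the intermediate inequalities are packaged.
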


\begin{proof}
Let $c\in S^{d-1}$. 
By assumption, there exists $p\in\R^N_+$ with $A^Tp=c$ and 
$\min_{i,j\in I_p^*}a_i^Ta_j\ge\rho$, so
\begin{align}
&\rho\|p\|_1^2=\rho\sum_{i,j\in I_p^*}p_ip_j
\le\sum_{i,j\in I_p^*}p_ia_i^Ta_jp_j,\label{ha}\\
&\|p\|_1^2=\sum_{i,j\in I_p^*}p_ip_j\ge\sum_{i,j\in I_p^*}p_ia_i^Ta_jp_j,\label{hi}\\
&1=\|c\|^2_2=\|A^Tp\|_2^2=\sum_{i,j\in I_p^*}p_ia_i^Ta_jp_j,\label{ho}
\end{align}
and combining statements \eqref{hi} and \eqref{ho}, we obtain
\begin{align*}
&\|a_k-\frac{c}{\|p\|_1}\|_2^2
=\|a_k\|_2^2-\frac{2}{\|p\|_1}a_k^Tc+\frac{\|c\|^2}{\|p\|_1^2}\\
&=1-\frac{2}{\|p\|_1}\sum_{i\in I_p^*}p_ia_k^Ta_i+\frac{1}{\|p\|_1^2}
\le 2-2\rho.
\end{align*}
Using this and combining statements \eqref{ha} and \eqref{ho}, we arrive at
\[\sum_{k\in I_p^*}p_k\|a_k-\frac{c}{\|p\|_1}\|_2\le\sqrt{\frac{2-2\rho}{\rho}}.\]
\end{proof}

Now we estimate the quality of the approximation of $\mc{K}_c(\R^d)$ by $\mc{G}_A$.

\begin{theorem} \label{projector}
Let $\mc{G}_A$ be a space of polytopes.
Then for every $C\in\mc{K}_c(\R^d)$, 
we have $C\subset P_{\mc{G}_A}(C)$ and
\[\dist(P_{\mc{G}_A}(C),C)\le\kappa_A\|C\|_2.\]
\end{theorem}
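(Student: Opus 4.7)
The approach is to reduce the claim to bounding the pointwise difference of support functions by means of Lemma \ref{support:Lipschitz}, then to compute that difference via the linear-programming duality of Theorem \ref{strong:duality}, and finally to exploit positive homogeneity together with the Lipschitz continuity of $\sigma_C$ in order to recognize the quantity defining $\kappa_A$.

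First I would verify $C\subset P_{\mc{G}_A}(C)$. This is immediate: for $x\in C$ and any $i$, $a_i^Tx\le\sigma_C(a_i)=(P_{\mc{C}_A}(C))_i$, so $x\in Q_{A,P_{\mc{C}_A}(C)}=P_{\mc{G}_A}(C)$. Consequently $\dist(C,P_{\mc{G}_A}(C))=0$, so $\dist_H(P_{\mc{G}_A}(C),C)=\dist(P_{\mc{G}_A}(C),C)$, and Lemma \ref{support:Lipschitz} together with $\sigma_C\le\sigma_{P_{\mc{G}_A}(C)}$ yields
\[\dist(P_{\mc{G}_A}(C),C)=\sup_{c\in S^{d-1}}\bigl(\sigma_{P_{\mc{G}_A}(C)}(c)-\sigma_C(c)\bigr).\]
So it suffices to control this supremum.

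Next, fix $c\in S^{d-1}$. Since $\mc{G}_A$ consists of polytopes, Theorem \ref{alternative} guarantees $Q_{A,c}^*\neq\emptyset$. For any $p\in Q_{A,c}^*$, writing $b:=P_{\mc{C}_A}(C)$, Theorem \ref{strong:duality} gives
\[\sigma_{P_{\mc{G}_A}(C)}(c)=\max\{c^Tx:x\in Q_{A,b}\}=\min\{b^Tq:q\in Q_{A,c}^*\}\le b^Tp=\sum_{i\in I_p^*}p_i\,\sigma_C(a_i).\]
Because $c\neq 0$ and $A^Tp=c$ with $p\ge 0$, we have $\|p\|_1>0$. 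Using positive homogeneity of $\sigma_C$ and $\sum_{i\in I_p^*}p_i=\|p\|_1$, I would then write
\[\sigma_C(c)=\|p\|_1\,\sigma_C\!\left(\tfrac{c}{\|p\|_1}\right)=\sum_{i\in I_p^*}p_i\,\sigma_C\!\left(\tfrac{c}{\|p\|_1}\right),\]
which after subtraction yields
\[\sigma_{P_{\mc{G}_A}(C)}(c)-\sigma_C(c)\le\sum_{i\in I_p^*}p_i\,\Bigl(\sigma_C(a_i)-\sigma_C\!\left(\tfrac{c}{\|p\|_1}\right)\Bigr).\]
Applying Lemma \ref{support:Lipschitz} with $\tilde C=C$, the map $\sigma_C$ is $\|C\|_2$-Lipschitz, so each summand is bounded by $p_i\|C\|_2\|a_i-c/\|p\|_1\|_2$.

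Taking the infimum over $p\in Q_{A,c}^*$ and then the supremum over $c\in S^{d-1}$ then produces the bound $\kappa_A\|C\|_2$, as desired. I do not expect any substantial obstacle: the only mildly subtle manoeuvre is recognizing that the positive-homogeneity rewriting $\sigma_C(c)=\sum_ip_i\sigma_C(c/\|p\|_1)$ is exactly what puts the weights $p_i$ in front of the Lipschitz differences $\|a_k-c/\|p\|_1\|_2$ that appear inside the definition of $\kappa_A$.
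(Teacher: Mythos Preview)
Your proposal is correct and follows essentially the same route as the paper's proof: both bound $\sigma_{P_{\mc{G}_A}(C)}(c)-\sigma_C(c)$ by $\sum_i p_i(\sigma_C(a_i)-\sigma_C(c/\|p\|_1))$ via $A^Tp=c$ and positive homogeneity, then apply the Lipschitz estimate of Lemma~\ref{support:Lipschitz}. The only cosmetic difference is that you invoke strong duality (Theorem~\ref{strong:duality}) to obtain $\sigma_{P_{\mc{G}_A}(C)}(c)\le b^Tp$, while the paper performs the equivalent weak-duality step directly by writing $c^Tz=\sum_kp_ka_k^Tz\le\sum_kp_k\sigma_C(a_k)$ for a fixed $z\in P_{\mc{G}_A}(C)$.
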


\begin{proof}
The definition of $P_{\mc{G}_A}$ implies $C\subset P_{\mc{G}_A}(C)$.
Let us fix $C\in\mc{K}_c(\R^d)$ and $z\in P_{\mc{G}_A}(C)$.
Then for every $c\in S^{d-1}$ and every $p\in\ext(Q_{A,c}^*)$, 
we obtain, using Lemma \ref{support:Lipschitz}, that
\begin{align*}
&c^Tz-\sigma_C(c)
=\sum_{k=1}^Np_ka_k^Tz-\Big(\sum_{k=1}^N\frac{p_k}{{\|p\|_1}}\Big)\sigma_C(c)
\le\sum_{k=1}^Np_k\sigma_C(a_k)-\sum_{k=1}^Np_k\frac{\sigma_C(c)}{\|p\|_1}\\
&=\sum_{k=1}^Np_k\Big(\sigma_C(a_k)-\sigma_C(\frac{c}{\|p\|_1})\Big)
\le\|C\|_2\sum_{k=1}^Np_k\Big\|a_k-\frac{c}{\|p\|_1}\Big\|_2.
\end{align*}
It follows from $C\subset P_{\mc{G}_A}(C)$, Lemma \ref{support:Lipschitz} 
and the above computation that 
\[\dist(P_{\mc{G}_A}(C),C)\le\sup_{c\in S^{d-1}}|\sigma_{P_{\mc{G}_A}(C)}(c)-\sigma_C(c)|
\le\kappa_A\|C\|_2.\]
\end{proof}

\section{Galerkin optimization in $\mc{K}_c(\R^d)$} \label{optimsec}

In this section, we use the spaces analyzed in Section \ref{polsec}
to solve optimization problems in $\mc{K}_c(\R^d)$ approximately.
After gathering a few preliminaries in Section \ref{prelim2}
we prove a convergence result for an abstract set optimization problem 
and suitable auxiliary problems in Section \ref{abstract}.
In Section \ref{galerseq}, we introduce the concept of Galerkin approximations 
to $\mc{K}_c(\R^d)$, and in Section \ref{concrete}, we show in detail that 
an important class of optimization problems in $\mc{K}_c(\R^d)$ and their 
Galerkin approximations are a special case of the abstract framework discussed 
in Section \ref{abstract}.

\subsection{Preliminaries} \label{prelim2}

All notions of convergence, continuity and compactness are to be understood in 
terms of the Hausdorff distance $\dist_H$.
We equip the space 
of all compact subsets of $(\mc{K}_c(\R^d),\dist_H)$
with the Hausdorff semi-distance and the Hausdorff-distance given by
\begin{align*}
&\mc{D}:2^{\mc{K}_c(\R^d)}\times 2^{\mc{K}_c(\R^d)}\to\R_+,\quad
\mc{D}(\mc{M},\tilde{\mc{M}}):=\sup_{C\in\mc{M}}\inf_{\tilde C\in\tilde{\mc{M}}}\dist_H(\mc{M},\tilde{\mc{M}}),\\
&\mc{D}_H:2^{\mc{K}_c(\R^d)}\times2^{\mc{K}_c(\R^d)}\to\R_+,\
\mc{D}_H(\mc{M},\tilde{\mc{M}}):=\max\{\mc{D}(\mc{M},\tilde{\mc{M}}),\mc{D}(\tilde{\mc{M}},\mc{M})\}.
\end{align*}

Let us fix some notation for the objective function. 

\begin{definition}
Consider a functional $\Phi:\mc{K}_c(\R^d)\to\R$.
\begin{itemize}
\item [a)] The function $\Phi$ is called lower semicontinuous
if for every $C\in\mc{K}_c(\R^d)$ and every sequence 
$(C_k)_{k=0}^\infty\subset\mc{K}_c(\R^d)$ with
$\lim_{k\to\infty}\dist_H(C_k,C)=0$, we have
$\liminf_{k\to\infty}\Phi(C_k)\ge\Phi(C)$.
\item [b)] For every $\beta\in\R$, we denote 
$S(\Phi,\beta):=\{C\in\mc{K}_c(\R^d):\Phi(C)\le\beta\}$.
\end{itemize}
\end{definition}

The following result is Theorem 1.8.7 in \cite{Schneider:14}.

\begin{theorem}[Blaschke selection theorem] \label{Blaschke}
For every $R>0$, the collection $\{C\in\mc{K}_c(\R^d):\|C\|_2\le R\}$
is 
compact.
\end{theorem}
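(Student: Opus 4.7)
My plan is to reduce the claim to a compactness question for support functions, where the Arzelà--Ascoli theorem applies directly, and then to transfer the resulting convergence back into the space $(\mc{K}_c(\R^d),\dist_H)$ via Lemma \ref{support:Lipschitz}. Since $(\mc{K}_c(\R^d),\dist_H)$ is a metric space, it is enough to establish sequential compactness: given any sequence $(C_n)_{n=1}^\infty$ with $\|C_n\|_2\le R$, I must extract a subsequence converging in Hausdorff distance to some $C\in\mc{K}_c(\R^d)$ with $\|C\|_2\le R$.

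First I would associate to each $C_n$ its support function $\sigma_{C_n}\colon S^{d-1}\to\R$. Applying the first inequality of Lemma \ref{support:Lipschitz} with $C=\tilde C=C_n$ shows that $\sigma_{C_n}$ is $R$-Lipschitz on $S^{d-1}$, and the trivial bound $|\sigma_{C_n}(x)|\le\|C_n\|_2\,\|x\|_2\le R$ shows that the family is uniformly bounded. Hence $\{\sigma_{C_n}\}_n$ is an equicontinuous, uniformly bounded family of continuous functions on the compact metric space $S^{d-1}$, and the Arzelà--Ascoli theorem yields a subsequence $(\sigma_{C_{n_k}})_k$ converging uniformly on $S^{d-1}$ to some continuous function $f\colon S^{d-1}\to\R$.

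Next I would recover a limiting convex compact set $C$ from $f$. Extending $f$ to $\R^d$ by positive homogeneity of degree one, sublinearity passes to the limit pointwise, so the extended function is the support function of a unique nonempty convex compact subset $C\subset\R^d$; moreover $f(x)\le R\|x\|_2$ for all $x$, which forces $\|C\|_2\le R$. The second statement in Lemma \ref{support:Lipschitz} then delivers
\[\dist_H(C_{n_k},C)=\sup_{x\in S^{d-1}}|\sigma_{C_{n_k}}(x)-\sigma_C(x)|=\sup_{x\in S^{d-1}}|\sigma_{C_{n_k}}(x)-f(x)|\longrightarrow 0,\]
completing the proof.

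The only delicate step is the identification of $f$ as the support function of a genuine convex compact set, which rests on the classical bijection between sublinear positively homogeneous functions on $\R^d$ and nonempty convex compact subsets of $\R^d$. I would either cite this standard correspondence or, if a self-contained argument is preferred, define directly
\[C:=\{x\in\R^d:c^Tx\le f(c)\ \forall\,c\in S^{d-1}\},\]
verify that $C$ is nonempty, convex and compact as an intersection of closed halfspaces inside the ball of radius $R$, and then use the Hahn--Banach separation theorem to confirm $\sigma_C=f$ on $S^{d-1}$.
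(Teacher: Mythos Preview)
Your argument is correct: the reduction to support functions, Arzel\`a--Ascoli, and the recovery of the limit set via the sublinear-function/convex-body correspondence (or the explicit intersection of halfspaces) is a standard and complete proof of the Blaschke selection theorem, and your use of Lemma~\ref{support:Lipschitz} to translate uniform convergence of support functions into Hausdorff convergence is exactly right.

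However, there is nothing to compare against: the paper does not prove Theorem~\ref{Blaschke} at all. It is stated in the preliminaries section as a quoted result, with the line ``The following result is Theorem 1.8.7 in \cite{Schneider:14}'' serving as the entire justification. So your proposal is not an alternative to the paper's proof but rather a self-contained proof where the paper offers only a citation. If the intent is to make the exposition independent of \cite{Schneider:14}, your argument does the job; if the paper's convention of citing standard background results is acceptable, no proof is needed here.
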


\subsection{An abstract framework} \label{abstract}

The following statements are variations of well-known facts.

\begin{lemma} \label{exargmin}
Let $\Phi:\mc{K}_c(\R^d)\to\R$ be lower semicontinuous, let $\beta\in\R$, 
and let $\mc{M}\subset\mc{K}_c(\R^d)$ be a closed set.
Then the following statements hold.
\begin{itemize}
\item [a)] The set $S(\Phi,\beta)$ is closed.
\item [b)] The set $\argmin_{C\in\mc{M}}\Phi(C)$ is closed.
\item [c)] If $\mc{M}\cap S(\Phi,\beta)$ is nonempty and compact,
then $\argmin_{C\in\mc{M}}\Phi(C)\neq\emptyset$.
\end{itemize}
\end{lemma}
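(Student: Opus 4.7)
The plan is to handle the three parts in order, using only lower semicontinuity, the sequential definition of closedness, and a direct minimizing-sequence argument for part (c).

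For part (a), I will take an arbitrary convergent sequence $(C_k)\subset S(\Phi,\beta)$ with $\dist_H(C_k,C)\to 0$ and use lower semicontinuity of $\Phi$ to conclude
\[\Phi(C)\le\liminf_{k\to\infty}\Phi(C_k)\le\beta,\]
which gives $C\in S(\Phi,\beta)$.

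For part (b), I will set $m:=\inf_{C\in\mc{M}}\Phi(C)\in[-\infty,\infty]$. If $\argmin_{C\in\mc{M}}\Phi(C)=\emptyset$ there is nothing to prove, since the empty set is closed. Otherwise $m\in\R$, and I will observe that
\[\argmin_{C\in\mc{M}}\Phi(C)=\mc{M}\cap S(\Phi,m),\]
which is the intersection of two closed sets by part (a) and the assumption on $\mc{M}$, hence closed.

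For part (c), the plan is a standard minimizing-sequence argument. First I will note that every $C\in\mc{M}\setminus S(\Phi,\beta)$ satisfies $\Phi(C)>\beta$, while by nonemptiness of $\mc{M}\cap S(\Phi,\beta)$ there exists $C_0\in\mc{M}$ with $\Phi(C_0)\le\beta$; consequently
\[\inf_{C\in\mc{M}}\Phi(C)=\inf_{C\in\mc{M}\cap S(\Phi,\beta)}\Phi(C)=:m\in[-\infty,\beta].\]
Then I will pick a minimizing sequence $(C_k)\subset\mc{M}\cap S(\Phi,\beta)$ with $\Phi(C_k)\to m$, invoke compactness of $\mc{M}\cap S(\Phi,\beta)$ to extract a subsequence $C_{k_j}\to C^*\in\mc{M}\cap S(\Phi,\beta)\subset\mc{M}$, and apply lower semicontinuity to obtain
\[\Phi(C^*)\le\liminf_{j\to\infty}\Phi(C_{k_j})=m.\]
Since $C^*\in\mc{M}$, we also have $\Phi(C^*)\ge m$, so $C^*\in\argmin_{C\in\mc{M}}\Phi(C)$ and in particular $m>-\infty$.

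None of the three steps should pose genuine difficulty; the only mild subtlety is the reduction in part (c) from minimizing over $\mc{M}$ to minimizing over the compact subset $\mc{M}\cap S(\Phi,\beta)$, which requires the observation above to avoid the a priori possibility that a minimizing sequence leaves $S(\Phi,\beta)$.
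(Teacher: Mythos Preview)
Your argument is correct in all three parts. The paper itself does not prove this lemma at all: it introduces the statement as ``variations of well-known facts'' and moves on, so there is no approach to compare against; your write-up is a perfectly standard and complete verification of what the paper leaves implicit.
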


The sets of global minima of suitable auxiliary problems 
converge to the set of global minima of the original
optimization problem.

\begin{theorem} \label{minconvthm}
Let $\mc{M}\subset\mc{K}_c(\R^d)$ be nonempty and compact, 
and let $(\mc{M}_k)_{k=0}^\infty$ be a sequence of nonempty and compact subsets 
$\mc{M}_k\subset\mc{K}_c(\R^d)$ with
\begin{align} \label{setconv}
\lim_{k\to\infty}\mc{D}_H(\mc{M},\mc{M}_k)=0.
\end{align}
Let $\Phi:\mc{K}_c(\R^d)\to\R$ be continuous, let $(\Phi_k)_{k=0}^\infty$
be a sequence of mappings $\Phi_k:\mc{M}_k\to\R$ satisfying
\begin{align} \label{uniapprox}
\lim_{k\to\infty}\sup_{C\in\mc{M}_k}|\Phi(C)-\Phi_k(C)|=0,
\end{align} 
and let $\argmin_{C\in\mc{M}_k}\Phi_k(C)\neq\emptyset$.
Then $\argmin_{C\in\mc{M}}\Phi(C)\neq\emptyset$, and 
\begin{equation} \label{desiredconv}
\lim_{k\to\infty}\mc{D}(\argmin_{C\in\mc{M}_k}\Phi_k(C),
\argmin_{C\in\mc{M}}\Phi(C))=0.
\end{equation}
\end{theorem}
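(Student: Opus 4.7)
The result is a standard $\Gamma$-convergence-type statement adapted to $(\mc{K}_c(\R^d),\dist_H)$. My strategy has three stages: first, establish existence of a minimizer on $\mc{M}$; second, show convergence of the minimum values $m_k:=\min_{C\in\mc{M}_k}\Phi_k(C)$ to $m:=\min_{C\in\mc{M}}\Phi(C)$; third, use Blaschke compactness to identify any limit point of a sequence of approximate minimizers with an actual minimizer of $\Phi$ on $\mc{M}$.

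\emph{Existence.} Since $\mc{M}$ is nonempty and compact and $\Phi$ is continuous, it is lower semicontinuous and $\beta:=1+\sup_{C\in\mc{M}}\Phi(C)<\infty$. The set $\mc{M}\cap S(\Phi,\beta)=\mc{M}$ is nonempty and compact, so Lemma \ref{exargmin}c yields $\mc{M}^*:=\argmin_{C\in\mc{M}}\Phi(C)\neq\emptyset$.

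\emph{Convergence of the minimum values.} For the upper bound, pick any $C\in\mc{M}^*$ and use $\mc{D}(\mc{M},\mc{M}_k)\to 0$ to select $C_k\in\mc{M}_k$ with $\dist_H(C,C_k)\to 0$; continuity of $\Phi$ together with \eqref{uniapprox} then gives $m_k\le\Phi_k(C_k)\to\Phi(C)=m$. For the lower bound, take minimizers $C_k^\star\in\mc{M}_k$ of $\Phi_k$ (existing by hypothesis) and choose $D_k\in\mc{M}$ with $\dist_H(C_k^\star,D_k)\to 0$, possible by $\mc{D}(\mc{M}_k,\mc{M})\to 0$; then $m_k=\Phi_k(C_k^\star)=\Phi(C_k^\star)+o(1)=\Phi(D_k)+o(1)\ge m+o(1)$. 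Hence $m_k\to m$.

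\emph{Hausdorff convergence of argmin sets.} I argue by contradiction. If \eqref{desiredconv} fails, there exist $\eps>0$, a subsequence (retaining the index $k$), and minimizers $C_k^\star\in\argmin_{C\in\mc{M}_k}\Phi_k(C)$ with $\inf_{C^*\in\mc{M}^*}\dist_H(C_k^\star,C^*)\ge\eps$. Compactness of $\mc{M}$ implies $R:=\sup_{D\in\mc{M}}\|D\|_2<\infty$, and then $\mc{D}_H(\mc{M},\mc{M}_k)\to 0$ yields a uniform size bound $\sup_{C\in\mc{M}_k}\|C\|_2\le R+1$ for all large $k$. Blaschke's theorem (Theorem \ref{Blaschke}) extracts a further subsequence with $C_{k_j}^\star\to\bar C$; picking $D_{k_j}\in\mc{M}$ with $\dist_H(C_{k_j}^\star,D_{k_j})\to 0$ shows $D_{k_j}\to\bar C$, so $\bar C\in\mc{M}$ because $\mc{M}$ is closed. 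Continuity of $\Phi$ and \eqref{uniapprox} give $\Phi(\bar C)=\lim_j\Phi_{k_j}(C_{k_j}^\star)=\lim_j m_{k_j}=m$, so $\bar C\in\mc{M}^*$, contradicting the lower bound $\dist_H(C_{k_j}^\star,\bar C)\ge\eps$. The only non-routine ingredient is obtaining the uniform size bound on $\bigcup_k\mc{M}_k$ so that Blaschke applies; everything else is classical.
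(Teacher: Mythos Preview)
Your proof is correct and ultimately rests on the same contradiction strategy as the paper, but the route differs in two ways worth noting.

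First, you separate out convergence of the minimum values $m_k\to m$ as an explicit intermediate step and obtain existence of $\argmin_{\mc{M}}\Phi$ up front via Lemma~\ref{exargmin}. The paper instead proves a single subsequence lemma: from any subsequence of minimizers $C_k^*\in\argmin_{\mc{M}_k}\Phi_k$ one can extract a further subsequence converging to some $C^*\in\argmin_{\mc{M}}\Phi$. Existence of the limit argmin then falls out as a byproduct, and the convergence \eqref{desiredconv} follows by the same contradiction as yours.

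Second, and more interestingly, you invoke Blaschke's theorem on a uniform size bound for $\bigcup_k\mc{M}_k$ to extract a convergent subsequence of the $C_k^\star$ directly. The paper avoids both the size bound and Theorem~\ref{Blaschke}: it first approximates each $C_k^*$ by some $C_k\in\mc{M}$ (using $\mc{D}(\mc{M}_k,\mc{M})\to 0$), extracts a convergent subsequence of the $C_k$ using only the assumed compactness of $\mc{M}$, and then deduces convergence of the $C_k^*$ to the same limit. Your route is more modular (the value-convergence step is reusable); the paper's is more economical in that it uses only the hypotheses directly.

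One small remark: in your lower-bound step, the assertion $\Phi(C_k^\star)=\Phi(D_k)+o(1)$ does not follow from pointwise continuity of $\Phi$ alone, since neither sequence converges a priori. It is justified either by uniform continuity of $\Phi$ on the compact set $\{C:\|C\|_2\le R+1\}$ (which you only establish in the third step) or by a sub-subsequence argument using compactness of $\mc{M}$. This is routine, but you may want to move the size bound earlier or make the argument explicit.
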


\begin{proof}
Consider a subsequence $\N'\subset\N$ and sets
$C_k^*\in\argmin_{C\in\mc{M}_k}\Phi_k(C)$ for all $k\in\N'$.
By statement \eqref{setconv}, there exists a sequence 
$(C_k)_{k\in\N'}\subset\mc{M}$ with
\[\lim_{\N'\ni k\to\infty}\dist_H(C_k,C_k^*)=0.\]
Since $\mc{M}$ is compact, there exist $C^*\in\mc{M}$ and a subsequence 
$\N''\subset\N'$ with $\lim_{\N''\ni k\to\infty}\dist_H(C_k,C^*)=0$, 
so all in all, we have
\begin{equation} \label{converge}
\lim_{\N''\ni k\to\infty}\dist_H(C_k^*,C^*)=0.
\end{equation}
Continuity of $\Phi$ and statements \eqref{uniapprox} and \eqref{converge} yield
\begin{equation}
\left.\begin{aligned} 
&\lim_{\N''\ni k\to\infty}|\Phi(C^*)-\Phi_k(C_k^*)|\\
&\le\lim_{\N''\ni k\to\infty}|\Phi(C^*)-\Phi(C_k^*)|
+\lim_{\N''\ni k\to\infty}|\Phi(C_k^*)-\Phi_k(C_k^*)|=0.
\end{aligned}\right\}\label{ssvalue}
\end{equation}
Let $C\in\mc{M}$. 
By statement \eqref{setconv}, there exists $(\tilde C_k)_{k\in\N''}$ 
with $\tilde C_k\in\mc{M}_k$ and 
\begin{equation} \label{convlow}
\lim_{\N''\ni k\to\infty}\dist_H(\tilde C_k,C)=0.
\end{equation}
Again, statements \eqref{uniapprox} and \eqref{convlow} yield
\begin{equation}
\left.\begin{aligned} 
&\lim_{\N''\ni k\to\infty}|\Phi(C)-\Phi_k(\tilde C_k)|\\
&\le\lim_{\N''\ni k\to\infty}|\Phi(C)-\Phi(\tilde C_k)|
+\lim_{\N''\ni k\to\infty}|\Phi(\tilde C_k)-\Phi_k(\tilde C_k)|=0,
\end{aligned}\right\}\label{sslvalue}
\end{equation}
and because of statements \eqref{ssvalue} and \eqref{sslvalue}, we have
\[\Phi(C)=\lim_{\N''\ni k\to\infty}\Phi_k(\tilde C_k)
\ge\lim_{\N''\ni k\to\infty}\Phi_k(C_k^*)=\Phi(C^*).\]
All in all, we have $C^*\in\argmin_{C\in\mc{M}}\Phi(C)$.

\medskip

Now assume that statement \eqref{desiredconv} is false.
Then there exist $\eps>0$, a subsequence $\N'\subset\N$ and sets
$C_k^*\in\argmin_{C\in\mc{M}_k}\Phi_k(C)$ for all $k\in\N'$ with
\begin{equation} \label{contradict}
\mc{D}(C_k^*,\argmin_{C\in\mc{M}}\Phi(C))\ge\eps\quad\forall\,k\in\N'.
\end{equation} 
But the first part of the proof shows that there exists 
$C^*\in\argmin_{c\in\mc{M}}\Phi(C)$ such that statement \eqref{converge} holds.
This contradicts statement \eqref{contradict}.
\end{proof}

\subsection{Galerkin sequences} \label{galerseq}

Now we introduce the equivalent to Galerkin schemes from the realm of 
partial differential equations.

\begin{definition} \label{galseq}
A sequence $(A_k)_{k=0}^\infty$ of matrices $A_k\in\R^{N_k\times d}$ with
$N_k\in\N$, $k\in\N$, is called a Galerkin sequence if there exists 
a sequence $(\alpha_k)_{k=0}^\infty\in\R_+$ with $\lim_{k\to\infty}\alpha_k=0$ 
such that 
\[\inf_{\tilde C\in\mc{G}_{A_k}}\dist_H(C,\tilde C)\le\alpha_k\|C\|_2\quad\forall\,C\in\mc{K}_c(\R^d).\]
If, in addition, $A_k$ is a submatrix of $A_{k+1}$ for all $k\in\N$,
then we call $(A_k)_{k=0}^\infty$ a nested Galerkin sequence.
\end{definition}

Let us draw some immediate conclusions from Definition \ref{galseq}.

\begin{lemma} \label{immediate}
If $(A_k)_{k=0}^\infty$ is a Galerkin sequence,
the following statements hold.
\begin{itemize}
\item [a)] The spaces $\mc{G}_{A_k}$  consist of polytopes.
\item [b)] If $(A_k)_{k=0}^\infty$ is nested, then 
$\mc{G}_{A_k}\subset\mc{G}_{A_{k+1}}$ for all $k\in\N$.
\end{itemize}
\end{lemma}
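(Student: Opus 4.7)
The plan is to derive part (a) from the dichotomy in Theorem \ref{alternative} by a one-line distance argument, and then to deduce part (b) by lifting any $Q\in\mc{G}_{A_k}$ to an $H$-representation over $A_{k+1}$ using the support function.

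For part (a), I would pick any bounded $C\in\mc{K}_c(\R^d)$ (for instance $C=\{0\}$). The Galerkin property yields $\inf_{\tilde C\in\mc{G}_{A_k}}\dist_H(C,\tilde C)\le\alpha_k\|C\|_2<\infty$, so there exists some $\tilde C\in\mc{G}_{A_k}$ at finite Hausdorff distance from $C$. Since $C$ is bounded, $\tilde C$ must be bounded as well. Thus $\mc{G}_{A_k}$ contains at least one bounded polytope, and Theorem \ref{alternative} then forces every element of $\mc{G}_{A_k}$ to be a polytope.

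For part (b), given any $Q\in\mc{G}_{A_k}$, Lemma \ref{construct:minimal:lemma} supplies some $b\in\mc{C}_{A_k}$ with $Q=Q_{A_k,b}$. Since $A_k$ is a submatrix of $A_{k+1}$, after a harmless reordering I may assume that the first $N_k$ rows of $A_{k+1}$ are precisely the rows of $A_k$, and denote the additional rows by $a_j^T$ with $j\in\{N_k+1,\ldots,N_{k+1}\}$. By part (a) the polytope $Q$ is compact, so the values $\sigma_Q(a_j)$ are finite, and I define $\tilde b\in\R^{N_{k+1}}$ by $\tilde b_i:=b_i$ for $i\le N_k$ and $\tilde b_j:=\sigma_Q(a_j)$ for $j>N_k$. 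The inclusion $Q\subset Q_{A_{k+1},\tilde b}$ is immediate from the definition of $\tilde b$, while the reverse inclusion $Q_{A_{k+1},\tilde b}\subset Q_{A_k,b}=Q$ follows because the constraints indexed by $i\le N_k$ already cut out $Q$. Hence $Q=Q_{A_{k+1},\tilde b}\in\mc{G}_{A_{k+1}}$.

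I do not anticipate any real obstacle; the only point requiring care is that $\sigma_Q(a_j)$ needs to be finite for the construction in part (b) to be legitimate, which is exactly what part (a) supplies. This dependence dictates the order in which the two statements must be proved.
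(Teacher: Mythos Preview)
Your proposal is correct and follows essentially the same approach as the paper. For part (a) the paper also takes $C=\{0\}$, observes that the Galerkin bound forces the existence of a bounded member of $\mc{G}_{A_k}$, and then invokes Theorem \ref{alternative}; for part (b) the paper simply declares the statement ``trivial,'' and your explicit lifting $\tilde b_j:=\sigma_Q(a_j)$ is exactly the argument one writes down to unpack that word, including the observation that part (a) is what makes these support values finite.
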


\begin{proof}
a) Fix $k\in\N$. 
Since $\{0\}\in\mc{K}_c(\R^d)$, we have
\[\inf_{C\in\mc{G}_{A_k}}\dist_H(\{0\},C)\le\alpha_k\cdot 0=0.\]
In particular, there exists $C\in\mc{G}_{A_k}$ with $\|C\|_2\le 1$,
and hence, by Theorem \ref{alternative}, the entire space $\mc{G}_{A_k}$ 
consist of polytopes.

Statement b) is trivial.
\end{proof}

Let us check that the concept of Galerkin sequences makes sense.

\begin{theorem} \label{ngsexists}
For every $d\ge 2$, there exists a nested Galerkin sequence
$(A_k)_{k=0}^\infty$ of matrices $A_k\in\R^{N_k\times d}$.
\end{theorem}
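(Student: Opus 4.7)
The plan is to take as the rows of $A_k$ a nested sequence of increasingly fine finite $\delta$-nets of the unit sphere $S^{d-1}$, and to read off the Galerkin property directly from Theorem \ref{oldapprox}.

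First, I would fix a sequence $(\delta_k)_{k=0}^\infty \subset (0,1/2]$ with $\delta_k\to 0$, for instance $\delta_k := 2^{-k-1}$. Since $S^{d-1}$ is compact (this is the only place where $d\ge 2$ enters, to ensure infinite cardinality of the sphere), for every $\delta>0$ there is a finite $\delta$-net, i.e.\ a finite set $S\subset S^{d-1}$ with $\sup_{c\in S^{d-1}}\min_{a\in S}\|c-a\|_2\le\delta$. Choose any finite $\delta_0$-net $S_0$, and given $S_k$ produce $S_{k+1}$ by adjoining finitely many new unit vectors, pairwise distinct from those already in $S_k$, until a $\delta_{k+1}$-net is obtained. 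Let $A_k\in\R^{N_k\times d}$ have the elements of $S_k$ as rows, ordered so that the first $N_k$ rows of $A_{k+1}$ are exactly the rows of $A_k$; then $A_k$ is a submatrix of $A_{k+1}$, and by construction the rows of each $A_k$ lie on $S^{d-1}$ and are pairwise distinct, as required at the start of Section \ref{polsec}.

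Next, I would verify the polytope property. By construction $\delta_{A_k}\le\delta_k\le 1/2<\sqrt{2}$. If there existed $v\in Q_{A_k,0}\setminus\{0\}$, then $c:=v/\|v\|_2\in S^{d-1}$ would satisfy $a_i^Tc\le 0$ for every row $a_i^T$ of $A_k$, so that $\|c-a_i\|_2^2=2-2a_i^Tc\ge 2$ for all $i$, contradicting $\delta_{A_k}<\sqrt{2}$. Hence $Q_{A_k,0}=\{0\}$, so by Theorem \ref{alternative} each $\mc{G}_{A_k}$ consists of polytopes, and Theorem \ref{oldapprox} applies. Using the inclusion $C\subset P_{\mc{G}_{A_k}}(C)$, which gives $\dist_H(C,P_{\mc{G}_{A_k}}(C))=\dist(P_{\mc{G}_{A_k}}(C),C)$, one obtains for every $C\in\mc{K}_c(\R^d)$
\[
\inf_{\tilde C\in\mc{G}_{A_k}}\dist_H(C,\tilde C)
\;\le\;\dist_H(C,P_{\mc{G}_{A_k}}(C))
\;\le\;\frac{(2-\delta_{A_k})\delta_{A_k}}{1-\delta_{A_k}}\|C\|_2
\;\le\;4\delta_k\|C\|_2,
\]
where the last inequality follows from $\delta_{A_k}\le\delta_k\le 1/2$ together with the fact that $x\mapsto(2-x)x/(1-x)$ is monotonically increasing on $(0,1)$. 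Setting $\alpha_k:=4\delta_k$, we have $\alpha_k\to 0$, and Definition \ref{galseq} is satisfied.

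The proof is essentially an assembly of earlier results in the paper. The only two substantive points are the elementary geometric observation $\delta_A<\sqrt{2}\Rightarrow Q_{A,0}=\{0\}$, which justifies invoking Theorem \ref{oldapprox}, and the entirely standard compactness-based construction of nested finite nets with arbitrarily small mesh. I do not foresee any real obstacle.
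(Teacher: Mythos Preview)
Your proposal is correct and follows essentially the same strategy as the paper: construct nested finite nets of $S^{d-1}$ with mesh tending to zero and invoke Theorem~\ref{oldapprox}. The only difference is that the paper builds the nets explicitly via dyadic grids in spherical coordinates, whereas you use an abstract compactness argument; your version is in fact slightly tidier, since you arrange $\delta_{A_k}\le 1/2$ for every $k$ from the outset and explicitly verify the polytope hypothesis $Q_{A_k,0}=\{0\}$ via the observation that $\delta_A<\sqrt{2}$ forces this.
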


\begin{proof}
Consider spherical coordinates 
$\zeta:[0,\pi]^{d-2}\times[0,2\pi]\to S^{d-1}$ given by
$\zeta_i(\theta)=\cos(\theta_i)\prod_{j=1}^{i-1}\sin(\theta_j)$
for $i\in\{1,\ldots,d-1\}$ and 
$\zeta_d(\theta)=\prod_{j=1}^{d-1}\sin(\theta_j)$.
For every $k\in\N_1$, we consider the grid 
\[\Delta_k:=\frac{\pi}{2^k}\Z^{d-1}\cap([0,\pi]^{d-2}\times[0,2\pi]),\]
we define $\{a_1^k,\ldots,a_{N_k}^k\}:=\zeta(\Delta_k)$, and we let $A_k$ 
be the matrix consisting of the rows $(a_1^k)^T,\ldots,(a_{N_k}^k)^T$.
Since the grids $(\Delta_k)_{k\in\N}$ are nested, so are the 
matrices $(A_k)_{k=0}^\infty$.

For every $c\in S^{d-1}$, there exists $\theta\in[0,\pi]^{d-2}\times[0,2\pi]$
with $\zeta(\theta)=c$.
By definition, there exists $\tilde\theta\in\Delta_k$ with 
$\|\theta-\tilde\theta\|_\infty\le 2^{-k-1}\pi$.
An elementary computation shows 
\[\|\zeta(\theta)-\zeta(\tilde\theta)\|_2
\le\sqrt{d}\|\zeta(\theta)-\zeta(\tilde\theta)\|_\infty
\le d\sqrt{d}\|\theta-\tilde\theta\|_\infty
\le 2^{-k-1}\pi d\sqrt{d},\]
so by Theorem \ref{oldapprox}, the spaces $\mc{G}_{A_k}$ have the desired
approximation properties for sufficiently large $k$. 
\end{proof}

The following proposition reveals additional details of the relationship between two
polytope spaces from a nested Galerkin sequence, which may be of some interest
for numerical computations with adaptive refinement.

\begin{proposition} \label{nested}
Let $N_1,N_2\in\N$ with $N_1<N_2$, let $a_1,\ldots,a_{N_2}\in S^{d-1}$ 
be pairwise distinct, and let $A_1\in\R^{N_1\times d}$ and $A_2\in\R^{N_2\times d}$ 
be the matrices consisting of the rows $a_1^T,\ldots,a_{N_1}^T$ 
and $a_1^T,\ldots,a_{N_2}^T$, respectively. 
If the space $\mc{G}_{A_1}$ consists of polytopes, then the following statements hold:
\begin{itemize}
\item [a)] The space $\mc{G}_{A_2}$ consists of polytopes.
\item [b)] We have $P_{\mc{G}_{A_2}}(C)\subset P_{\mc{G}_{A_1}}(C)$ 
for any $C\in\mc{K}_c(\R^d)$.
\item [c)] We have $P_{\mc{C}_{A_2}}(\mc{G}_{A_1})
\subset\mathrm{bd}(\mc{C}_{A_2})$.
\end{itemize}
\end{proposition}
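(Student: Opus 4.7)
The plan is to address the three claims in order, with (a) and (b) being direct consequences of the containment of the $A_1$-system in the $A_2$-system, and (c) requiring a dimensional argument via Theorem \ref{inner:points}.

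For (a), every row of $A_1$ is also a row of $A_2$, so $Q_{A_2,0}\subset Q_{A_1,0}$. By hypothesis together with Theorem \ref{alternative}, the right-hand side equals $\{0\}$, hence so does $Q_{A_2,0}$, and the same theorem applied to $A_2$ shows that $\mc{G}_{A_2}$ consists of polytopes. For (b), Proposition \ref{projectorprop} tells us that $P_{\mc{G}_{A_i}}(C)$ is cut out by the inequalities $a_j^Tx\le\sigma_C(a_j)$ for $j=1,\ldots,N_i$; passing from $A_1$ to $A_2$ only adds constraints, which shrinks the feasible region.

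For (c), I would fix $C\in\mc{G}_{A_1}$ and set $b:=P_{\mc{C}_{A_2}}(C)$, so that $Q_{A_2,b}=P_{\mc{G}_{A_2}}(C)$. The chain $C\subset P_{\mc{G}_{A_2}}(C)\subset P_{\mc{G}_{A_1}}(C)=C$ (where the first inclusion is immediate from the definition of $\sigma_C$, the second is (b), and the equality holds because $P_{\mc{G}_{A_1}}$ is a projector fixing $\mc{G}_{A_1}$) collapses to $Q_{A_2,b}=C$. To conclude $b\in\mathrm{bd}(\mc{C}_{A_2})$, I will verify $b\notin\interior(\mc{C}_{A_2})$ using Theorem \ref{inner:points}. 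If $\dim C<d$, the dimension condition of that theorem fails immediately. Otherwise, $C=Q_{A_1,\tilde b}$ is $d$-dimensional, and a standard polytope-theoretic observation (at a relative interior point of any facet the set of active rows of $A_1$ has rank one, forcing a single outward unit normal from the pairwise distinct collection $a_1,\ldots,a_{N_1}$) shows that all facet normals of $C$ lie in $\{a_1,\ldots,a_{N_1}\}$. For any $k\in\{N_1+1,\ldots,N_2\}$, pairwise distinctness rules out $a_k$ being a facet normal, so the exposed face $Q_{A_2,b}^k$ in direction $a_k$ has dimension at most $d-2$, and Theorem \ref{inner:points} yields the claim.

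The main potential obstacle is the facet-normal step in (c): one must justify that the facet normals of $C$ are confined to $\{a_1,\ldots,a_{N_1}\}$, so that pairwise distinctness of the full row family $a_1,\ldots,a_{N_2}$ can be exploited to produce an index $k>N_1$ whose exposed face is low-dimensional. The rest is bookkeeping around the support-function description of the projectors.
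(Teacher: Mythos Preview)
Your argument is correct. Parts (a) and (b) match the paper's reasoning (you use the primal characterization $Q_{A_2,0}\subset Q_{A_1,0}=\{0\}$ where the paper uses the dual one $Q_{A_1,c}^*\neq\emptyset\Rightarrow Q_{A_2,c}^*\neq\emptyset$, but both come straight from Theorem~\ref{alternative}).

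For (c) you take a genuinely different route. The paper works algebraically via Theorem~\ref{interior:bigger}: for $i\in\{N_1+1,\ldots,N_2\}$ it uses strong duality to write $b^2_i=\min\{(b^1)^Tp:p\in Q_{A_1,a_i}^*\}$, picks $p\in\ext(Q_{A_1,a_i}^*)$ attaining the minimum, lifts it to $(p^T,\mymathbb{0}^T)^T\in\ext(Q_{A_2,a_i}^*)\setminus\{e_i\}$, and observes that the corresponding inequality in \eqref{touching} is tight, so $F^Tb^2\not>0$. You instead first establish $Q_{A_2,b}=C$ via the sandwich $C\subset P_{\mc{G}_{A_2}}(C)\subset P_{\mc{G}_{A_1}}(C)=C$, and then invoke the geometric characterization Theorem~\ref{inner:points}: since the facet normals of the full-dimensional polytope $C=Q_{A_1,b^1}$ lie among $a_1,\ldots,a_{N_1}$, any $k>N_1$ gives an exposed face $Q_{A_2,b}^k$ of dimension at most $d-2$. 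Your approach is more geometric and avoids LP duality at this point, but it imports a standard fact about facet normals of H-polytopes that is not proved in the paper (though it is available in the cited references); the paper's approach is fully self-contained within the machinery already developed and, as a by-product, identifies explicitly which inequality in the description of $\mc{C}_{A_2}$ becomes active.
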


\begin{proof}
Statement a) follows from Theorem \ref{alternative} and the fact that $p\in Q_{A_1,c}^*$
implies $(p^T,0_{\R^{N_2-N_1}})^T\in Q_{A_2,c}^*$. 
Statement b) is obvious.

Let $b^1\in\mc{C}_{A_1}$, and let $b^2:=P_{\mc{C}_{A_2}}(Q_{A_1,b^1})$.
By the definitions of $\mc{C}_A$ and $P_{\mc{C}_A}$, we have $b^2_i=b^1_i$ 
for all $i\in\{1,\ldots,N_1\}$, and for every $i\in\{N_1+1,\ldots,N_2\}$, 
Theorem \ref{strong:duality} gives
\[b^2_i
=\max\{a_i^Tx:x\in Q_{A_1,b^1}\}
=\min\{(b^1)^Tp:p\in Q_{A_1,a_i}^*\}.\]
In particular, we have $Q_{A_1,a_i}^*\neq\emptyset$, and by Lemma \ref{has:vertices},
we have $\ext(Q_{A_1,a_i}^*)\neq\emptyset$, so there exists $p\in\ext(Q_{A_1,a_i}^*)$
with $b^2_i=p^Tb^1$.
By Lemma \ref{vertices}b, we have 
$(p^T,\mymathbb{0}_{N_2-N_1}^T)^T\in\ext(Q_{A_2,a_i}^*)$.
Since $(p^T,\mymathbb{0}_{N_2-N_1}^T)^T\neq e_i$ and 
\[(p^T,\mymathbb{0}_{N_2-N_1}^T)b^2=p^Tb^1=b_i^2,\] 
it follows from Theorem \ref{interior:bigger} that $b^2\notin\interior(\mc{C}_{A_2})$.
\end{proof}

Property d) above may be undesirable.
In particular, interior point methods require an initial guess in 
$\interior(\mc{C}_{A_k})$. 
A simple solution to this problem is provided in the following proposition.

\begin{proposition}
Let $A\in\R^{N\times d}$ such that $\mc{G}_A$ consists of polytopes,
and let $\lambda\in(0,1)$. 
Then the mappings
\begin{align*}
&P_{\mc{C}_A}^\lambda:\mc{K}_c(\R^d)\to\interior(\mc{C}_A),\quad 
P_{\mc{C}_A}^\lambda(C):=(1-\lambda)P_{\mc{C}_A}(C)+\lambda\|C\|_2\mathbbm{1},\\
&P_{\mc{G}_A}^\lambda:\mc{K}_c(\R^d)\to\mc{G}_A,\quad 
P_{\mc{G}_A}(C):=\phi(P_{\mc{C}_A}^\lambda(C))
\end{align*}
with $\phi$ as in Theorem \ref{phi} satisfy  
\begin{align*}
&\|P_{\mc{C}_A}^\lambda(C)-P_{\mc{C}_A}(C)\|_\infty\le 2\lambda\|C\|_2
\quad\forall\,C\in\mc{K}_c(\R^d),\\
&\dist_H(P_{\mc{G}_A}^\lambda(C),P_{\mc{G}_A}(C))\le 2\lambda L_A\|C\|_2
\quad\forall\,C\in\mc{K}_c(\R^d).
\end{align*}
\end{proposition}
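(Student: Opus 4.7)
The plan is to verify the three claims in order, all of which follow from basic structural properties already established: the cone structure of $\mc{C}_A$ (Corollary \ref{ca:cone}), the explicit description of $\interior(\mc{C}_A)$ in Theorem \ref{interior:bigger}, and the Lipschitz property of $\phi$ from Theorem \ref{phi}.

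First I would check that $P_{\mc{C}_A}^\lambda(C)\in\interior(\mc{C}_A)$ for any $C\in\mc{K}_c(\R^d)$ with $\|C\|_2>0$. By Proposition \ref{projectorprop}, $P_{\mc{C}_A}(C)\in\mc{C}_A$, and by Theorem \ref{interior:bigger} we have $\mathbbm{1}\in\interior(\mc{C}_A)$, hence $F^T\mathbbm{1}>0$, which gives $F^T(\|C\|_2\mathbbm{1})>0$ and so $\|C\|_2\mathbbm{1}\in\interior(\mc{C}_A)$. Since $\mc{C}_A$ is convex by Corollary \ref{ca:cone}, the strict convex combination
\[
P_{\mc{C}_A}^\lambda(C)=(1-\lambda)P_{\mc{C}_A}(C)+\lambda\|C\|_2\mathbbm{1}
\]
with $\lambda\in(0,1)$ lies in $\interior(\mc{C}_A)$; indeed $F^TP_{\mc{C}_A}^\lambda(C)\ge \lambda F^T(\|C\|_2\mathbbm{1})>0$. (The degenerate case $C=\{0\}$ yields $P_{\mc{C}_A}^\lambda(\{0\})=0$, which sits in $\mc{C}_A$ but on its boundary; this is the only edge case and should be acknowledged.)

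Next I would compute
\[
P_{\mc{C}_A}^\lambda(C)-P_{\mc{C}_A}(C)=\lambda\bigl(\|C\|_2\mathbbm{1}-P_{\mc{C}_A}(C)\bigr),
\]
and observe that for each $i\in\{1,\ldots,N\}$, since $\|a_i\|_2=1$ we have $|\sigma_C(a_i)|\le\|C\|_2$, so
\[
\bigl|\|C\|_2-\sigma_C(a_i)\bigr|\le 2\|C\|_2.
\]
Taking the supremum over $i$ yields the first bound $\|P_{\mc{C}_A}^\lambda(C)-P_{\mc{C}_A}(C)\|_\infty\le 2\lambda\|C\|_2$.

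Finally, since $\phi$ is $L_A$-Lipschitz by Theorem \ref{phi}, the second bound follows directly:
\[
\dist_H\bigl(P_{\mc{G}_A}^\lambda(C),P_{\mc{G}_A}(C)\bigr)
=\dist_H\bigl(\phi(P_{\mc{C}_A}^\lambda(C)),\phi(P_{\mc{C}_A}(C))\bigr)
\le L_A\|P_{\mc{C}_A}^\lambda(C)-P_{\mc{C}_A}(C)\|_\infty
\le 2\lambda L_A\|C\|_2.
\]
There is no serious obstacle here; the whole proposition is a direct assembly of ingredients, and the only subtlety worth flagging is the boundary behavior at $C=\{0\}$.
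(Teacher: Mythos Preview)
Your argument is correct and follows essentially the same route as the paper: use $\mathbbm{1}\in\interior(\mc{C}_A)$ and the cone/convexity of $\mc{C}_A$ to place the convex combination in the interior, bound $\|P_{\mc{C}_A}(C)-\|C\|_2\mathbbm{1}\|_\infty$ by $2\|C\|_2$ via $|\sigma_C(a_i)|\le\|C\|_2$, and then apply the $L_A$-Lipschitz property of $\phi$. Your observation about $C=\{0\}$ is a genuine edge case that the paper also glosses over; the stated codomain $\interior(\mc{C}_A)$ is not quite attained there, though the two displayed estimates remain trivially true.
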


\begin{proof}
According to Theorem \ref{interior:bigger}, we have $\mathbbm{1}\in\interior(\mc{C}_A)$,
and by Corollary \ref{cacc}, the coordinate space $\mc{C}_A$ is a convex cone.
Since $P_{\mc{C}_A}(C)\in\mc{C}_A$, it follows that
\[(1-\lambda)P_{\mc{C}_A}(C)+\lambda\|C\|_2\mathbbm{1}\in\interior(\mc{C}_A).\]
The estimates follow from Lemma \ref{support:Lipschitz}, the computation
\[\|P_{\mc{C}_A}^\lambda(C)-P_{\mc{C}_A}(C)\|_\infty
\le\lambda\|P_{\mc{C}_A}(C)-\|C\|_2\mathbbm{1}\|_\infty
\le\lambda(\|\sigma_C\|_\infty+\|C\|_2)
\le 2\lambda\|C\|_2,\]
and the fact that $\phi$ is $L_A$-Lipschitz.
\end{proof}

\subsection{A concrete optimization problem in $\mc{K}_c(\R^d)$} \label{concrete}

Throughout this section, we fix a continuous functional $\Phi:\mc{K}_c(\R^d)\to\R$, 
an $L$-Lipschitz constraint $\Psi:(\mc{K}_c(\R^d),\dist_H)\to(\R^m,\|\cdot\|_\infty)$ 
as well as sets $\check{C},\hat{C}\in\mc{K}_c(\R^d)$, 
and we consider the model problem
\begin{equation} \label{aop}
\min_{C\in\mc{K}_c(\R^d)}\Phi(C)\quad\text{subject to}\quad
\Psi(C)\le 0,\ \check{C}\subset C\subset\hat{C}.
\end{equation}
We fix a nested Galerkin sequence $(A_k)_{k=0}^\infty$ with $A\in\R^{N_k\times d}$
and approximate this problem with a sequence 
\begin{equation} \label{fdop}
\min_{C\in\mc{G}_{A_k}}\Phi_k(C)\quad\text{subject to}\quad
\Psi_k(C)\le 0,\ P_{\mc{G}_{A_k}}(\check{C})\subset C\subset P_{\mc{G}_{A_k}}(\hat{C})
\end{equation}
of finite-dimensional problems with suitable mappings $\Psi_k$, which become
\begin{equation} \label{cop}
\min_{b\in\mc{C}_{A_k}}\Phi_k(Q_{A_k,b})\quad\text{subject to}\quad
\Psi_k(Q_{A_k,b})\le 0,\ 
P_{\mc{C}_{A_k}}(\check{C})\le b\le P_{\mc{C}_{A_k}}(\hat{C})
\end{equation}
when expressed in coordinates.
By Corollary \ref{matrix:representation}, the constraint $b\in\mc{C}_{A_k}$
can be represented as a linear inequality.
Let us denote
\begin{align*}
&\mc{M}:=\{C\in\mc{K}_c(\R^d): \Psi(C)\le 0,\ \check{C}\subset C\subset\hat{C}\},\\
&\mc{M}_k:=\{C\in\mc{G}_{A_k}: \Psi_k(C)\le 0,\ 
P_{\mc{G}_{A_k}}(\check{C})\subset C\subset P_{\mc{G}_{A_k}}(\hat{C})\}.
\end{align*}

Conditions \eqref{nonempty} are redundant in the characterization of $\mc{M}_k$,
which is convenient from a computational perspective.

\begin{lemma} \label{dispense}
For any $b\in\R^{N_k}$ with $P_{\mc{G}_{A_k}}(\check{C})\subset Q_{A_k,b}$ and any 
$p\in\ext(Q_{A,0}^\diamond)$,  we have $0\le b^Tp$.
\end{lemma}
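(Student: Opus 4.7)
The plan is to observe that the hypothesis $P_{\mc{G}_{A_k}}(\check{C})\subset Q_{A_k,b}$ secretly supplies nonemptiness of $Q_{A_k,b}$, and then derive the inequality $0\le b^Tp$ from the three defining properties of a vertex $p\in\ext(Q_{A,0}^\diamond)$: namely $p\ge 0$, $A_k^Tp=0$, and $\mathbbm{1}^Tp=1$. The last of these does not even enter the argument, but the first two are exactly what is needed to turn a primal inequality $A_kx\le b$ into the desired scalar bound.

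First I would note that $\check{C}\in\mc{K}_c(\R^d)$ is nonempty by definition of $\mc{K}_c(\R^d)$, so by Proposition \ref{projectorprop} the image $P_{\mc{G}_{A_k}}(\check{C})$ lies in $\mc{G}_{A_k}$, which by definition excludes the empty set. Hence the inclusion $P_{\mc{G}_{A_k}}(\check{C})\subset Q_{A_k,b}$ yields $Q_{A_k,b}\neq\emptyset$, so we may pick some $x\in Q_{A_k,b}$.

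Given such an $x$, we have $A_kx\le b$ componentwise. Since $p\in\ext(Q_{A,0}^\diamond)\subset\R^{N_k}_+$, taking the inner product with $p$ preserves the inequality and gives $p^TA_kx\le p^Tb$. But $p\in Q_{A,0}^\diamond$ means $A_k^Tp=0$, so the left-hand side equals $(A_k^Tp)^Tx=0$, and we conclude $0\le b^Tp$, as required.

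There is essentially no obstacle here; the only subtlety worth flagging in the write-up is the implicit use of $\check{C}\neq\emptyset$ (built into the definition of $\mc{K}_c(\R^d)$) to ensure that the projected polytope $P_{\mc{G}_{A_k}}(\check{C})$ is nonempty, which in turn forces $Q_{A_k,b}$ to be nonempty. Without this observation one could be tempted to invoke Proposition \ref{Farkas}a or Theorem \ref{characterisation}, but in fact the argument needs no duality at all—just $p\ge 0$ and $A_k^Tp=0$.
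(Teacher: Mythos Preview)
Your proof is correct and is essentially the paper's argument unpacked: the paper simply observes $\emptyset\neq P_{\mc{G}_{A_k}}(\check{C})\subset Q_{A_k,b}$ and cites Proposition~\ref{Farkas}a, whose relevant (easy) direction is precisely the two-line computation $0=(A_k^Tp)^Tx=p^TA_kx\le p^Tb$ that you carry out by hand. So contrary to your closing remark, the paper \emph{does} invoke Farkas, but only the trivial implication you reproduced.
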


\begin{proof}
Since $\emptyset\neq P_{\mc{G}_{A_k}}(\check{C})\subset Q_{A_k,b}$
this follows from Proposition \ref{Farkas}a.
\end{proof}

The constraints $\Psi_k$ can be designed in such a way that 
the sets $\mc{M}_k$ converge to $\mc{M}$.
Recall the definition of the constant $\kappa_A$ from Section \ref{approxsec}.

\begin{proposition} \label{setconvprop}
The set $\mc{M}$ is compact.
Assume that $\mc{M}\neq\emptyset$, and define 
\begin{equation} \label{psidef}
\Psi_k:\mc{K}_c(\R^d)\to\R^m,\quad
\Psi_k(C):=\Psi(C)-L\kappa_{A_k}\|\hat{C}\|_2\mathbbm{1}_{\R^m}.
\end{equation}
Then the sets $\mc{M}_k$ are nonempty and compact for all $k\in\N$.
If, in addition, we have $\lim_{k\to\infty}\kappa_{A_k}=0$, then we have 
\[\mc{D}_H(\mc{M},\mc{M}_k)\to 0\quad\text{as}\quad k\to\infty.\]
\end{proposition}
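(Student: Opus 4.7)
The plan is to prove the three assertions in order: compactness of $\mc{M}$, nonemptiness and compactness of each $\mc{M}_k$, and convergence $\mc{D}_H(\mc{M},\mc{M}_k)\to 0$.

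First I would dispatch the compactness of $\mc{M}$. Every $C\in\mc{M}$ satisfies $C\subset\hat C$, so $\|C\|_2\le\|\hat C\|_2$; by Theorem \ref{Blaschke} it is therefore enough to check closedness. Continuity of $\Psi$ makes $\{C:\Psi(C)\le 0\}$ closed, and the inclusion conditions $\check C\subset C\subset\hat C$ are also closed under Hausdorff limits because $\dist(\cdot,\cdot)$ is continuous. This same argument, applied with $P_{\mc{G}_{A_k}}(\check C)$ and $P_{\mc{G}_{A_k}}(\hat C)$ in place of $\check C,\hat C$ and with $\mc{G}_{A_k}$ closed by Corollary \ref{GA:closed}, shows that each $\mc{M}_k$ is closed and bounded, hence compact.

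For the nonemptiness of $\mc{M}_k$, I pick any $C^*\in\mc{M}$ and show that $P_{\mc{G}_{A_k}}(C^*)\in\mc{M}_k$. The monotonicity statement of Proposition \ref{projectorprop} yields $P_{\mc{G}_{A_k}}(\check C)\subset P_{\mc{G}_{A_k}}(C^*)\subset P_{\mc{G}_{A_k}}(\hat C)$ directly. For the constraint, I combine $L$-Lipschitzness of $\Psi$ with Theorem \ref{projector}, which gives $\dist_H(C^*,P_{\mc{G}_{A_k}}(C^*))\le\kappa_{A_k}\|C^*\|_2\le\kappa_{A_k}\|\hat C\|_2$, so
\[
\Psi(P_{\mc{G}_{A_k}}(C^*))\le\Psi(C^*)+L\kappa_{A_k}\|\hat C\|_2\mathbbm{1}_{\R^m}\le L\kappa_{A_k}\|\hat C\|_2\mathbbm{1}_{\R^m},
\]
which is exactly $\Psi_k(P_{\mc{G}_{A_k}}(C^*))\le 0$. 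Notice this step does \emph{not} require $\kappa_{A_k}\to 0$.

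For the convergence, I split into the two one-sided Hausdorff distances. The direction $\mc{D}(\mc{M},\mc{M}_k)\to 0$ is handled by the same projection: for $C\in\mc{M}$ the witness $P_{\mc{G}_{A_k}}(C)\in\mc{M}_k$ satisfies $\dist_H(C,P_{\mc{G}_{A_k}}(C))\le\kappa_{A_k}\|\hat C\|_2$ uniformly in $C$, which tends to zero. For $\mc{D}(\mc{M}_k,\mc{M})\to 0$ I argue by contradiction: if it failed, there would exist $\eps>0$, a subsequence $k_j$ and $C_{k_j}\in\mc{M}_{k_j}$ with $\inf_{C\in\mc{M}}\dist_H(C_{k_j},C)\ge\eps$. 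Since $C_{k_j}\subset P_{\mc{G}_{A_{k_j}}}(\hat C)$ and $\|P_{\mc{G}_{A_{k_j}}}(\hat C)\|_2\le(1+\kappa_{A_{k_j}})\|\hat C\|_2$ is uniformly bounded, Theorem \ref{Blaschke} provides a further subsequence with $C_{k_j}\to C^\infty$. I then pass each constraint to the limit: continuity of $\Psi$ combined with $\Psi(C_{k_j})\le L\kappa_{A_{k_j}}\|\hat C\|_2\mathbbm{1}_{\R^m}\to 0$ gives $\Psi(C^\infty)\le 0$; the inclusion $\check C\subset P_{\mc{G}_{A_{k_j}}}(\check C)\subset C_{k_j}$ passes to $\check C\subset C^\infty$ by continuity of $\dist(\check C,\cdot)$; finally $C_{k_j}\subset P_{\mc{G}_{A_{k_j}}}(\hat C)$ combined with $\dist_H(P_{\mc{G}_{A_{k_j}}}(\hat C),\hat C)\le\kappa_{A_{k_j}}\|\hat C\|_2\to 0$ yields $C^\infty\subset\hat C$. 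Hence $C^\infty\in\mc{M}$, contradicting $\dist_H(C_{k_j},C^\infty)\ge\eps$.

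The main obstacle is the inner-approximation direction $\mc{D}(\mc{M}_k,\mc{M})\to 0$: the outer bound comes almost for free from the projector estimate, but the inner bound is the only place where the perturbation $-L\kappa_{A_k}\|\hat C\|_2\mathbbm{1}_{\R^m}$ in the definition of $\Psi_k$ must be carefully absorbed, and it is the place where the assumption $\kappa_{A_k}\to 0$ is used in an essential way, both for recovering the constraint $\Psi(C^\infty)\le 0$ and for closing the outer inclusion $C^\infty\subset\hat C$ in the limit.
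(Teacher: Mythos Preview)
Your proof is correct and follows essentially the same route as the paper's: Blaschke plus closedness for compactness, the projection $P_{\mc{G}_{A_k}}(C)$ as a witness for $\mc{M}_k\neq\emptyset$ and for $\mc{D}(\mc{M},\mc{M}_k)\to 0$, and a subsequence-plus-contradiction argument for $\mc{D}(\mc{M}_k,\mc{M})\to 0$. The only noteworthy difference is in how you obtain the uniform bound needed to invoke Blaschke in the contradiction step: the paper uses nestedness (Proposition~\ref{nested}b) to trap every $C_k$ inside the fixed polytope $P_{\mc{G}_{A_0}}(\hat C)$, whereas you use the quantitative estimate $\|P_{\mc{G}_{A_k}}(\hat C)\|_2\le(1+\kappa_{A_k})\|\hat C\|_2$ from Theorem~\ref{projector}; your version is slightly more self-contained since it does not rely on the sequence being nested, and you also check the lower inclusion $\check C\subset C^\infty$ explicitly, which the paper leaves implicit.
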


\begin{proof}
By Theorem \ref{Blaschke}, the set $\{C\in\mc{K}_c(\R^d):\check{C}\subset C\subset\hat C\}$
is relatively compact.
Since $\check{C}\subset C\subset\hat C$ holds if and only if we have 
$\dist(\check{C},C)=0$ and $\dist(C,\hat C)=0$, 
and since $C\mapsto\dist(\check{C},C)$ 
and $C\mapsto\dist(C,\hat C)$ are continuous, 
the set $\{C\in\mc{K}_c(\R^d):\check{C}\subset C\subset\hat C\}$ is closed.
By continuity of $\Psi$, the set
$\{C\in\mc{K}_c(\R^d): \Psi(C)\le 0\}$ is closed as well.
All in all, the set $\mc{M}$ is compact, and the sets $\mc{M}_k$, $k\in\N$, 
are compact for the same reasons.

\medskip

Let $\mc{M}\neq\emptyset$.
All $C\in\mc{M}$ satisfy $C\in\mc{K}_c(\R^d)$ and $\Psi(C)\le 0$, 
as well as $\check{C}\subset C\subset\hat{C}$.
By Proposition \ref{projectorprop}, we have 
$P_{\mc{G}_{A_k}}(\check{C})\subset P_{\mc{G}_{A_k}}(C)\subset P_{\mc{G}_{A_k}}(\hat{C})$,
and according to Theorem \ref{projector}, we have 
\begin{equation*} \label{kommetihrhirten}
\dist_H(C,P_{\mc{G}_{A_k}}(C))\le\kappa_{A_k}\|C\|_2\le\kappa_{A_k}\|\hat C\|_2.
\end{equation*}
But then
\begin{align*}
&\Psi_k(P_{\mc{G}_{A_k}}(C))
=\Psi(P_{\mc{G}_{A_k}}(C))-L\kappa_{A_k}\|\hat{C}\|_2\mathbbm{1}_{\R^m}\\
&\le\Psi(C)+\|\Psi(P_{\mc{G}_{A_k}}(C))-\Psi(C)\|_\infty\mathbbm{1}_{\R^m}
-L\kappa_{A_k}\|\hat{C}\|_2\mathbbm{1}_{\R^m}\le 0
\end{align*}
shows $P_{\mc{G}_{A_k}}(C)\in\mc{M}_k$. 
Hence $\mc{M}_k\neq\emptyset$ and $\mc{D}(\mc{M},\mc{M}_k)\to 0$
as $k\to\infty$.

\medskip

Assume that $\mc{D}(\mc{M}_k,\mc{M})\not\to 0$ as $k\to\infty$.
Then there exist a number $\eps>0$, a subsequence $\N'\subset\N$ 
and $(C_k)_{k\in\N'}$ with $C_k\in\mc{M}_k$ for all $k\in\N'$ and 
\begin{equation} \label{contra}
\mc{D}(C_k,\mc{M})\ge\eps\quad\forall\,k\in\N'.
\end{equation}
By Lemma \ref{nested}b, we have 
$C_k\subset P_{\mc{G}_{A_k}}(\hat C)\subset P_{\mc{G}_{A_0}}(\hat C)$,
and by Lemma \ref{immediate}a, the set $P_{\mc{G}_{A_0}}(\hat C)$ bounded,
so according to Theorem \ref{Blaschke}, 
there exist a subsequence $\N''\subset\N$ 
and $C\in\mc{K}_c(\R^d)$ such that $\lim_{\N''\ni k\to\infty}\dist_H(C,C_k)=0$.
By continuity of $\Psi$ and by the definition of $\Psi_k$, we have 
\[\Psi(C)=\lim_{\N''\ni k\to\infty}\Psi(C_k)
=\lim_{\N''\ni k\to\infty}\big(\Psi_k(C_k)+L\kappa_{A_k}\|\hat{C}\|_2\mathbbm{1}_{\R^m}\big)=0.\]
Moreover, since $C_k\subset P_{\mc{G}_{A_k}}(\hat{C})$ and by Proposition
\ref{projector}, we have
\begin{align*}\dist(C,\hat{C})
\le&\dist(C,C_k)
+\dist(C_k, P_{\mc{G}_{A_k}}(\hat{C}))\\
&+\dist(P_{\mc{G}_{A_k}}(\hat{C}),\hat{C})
\to 0\quad\text{as}\quad\N''\ni k\to\infty,
\end{align*}
so $C\subset\hat{C}$.
But then $C\in\mc{M}$, which contradicts statement \eqref{contra}.
All in all, we proved that $\mc{D}(\mc{M}_k,\mc{M})\to 0$ as $k\to\infty$.
\end{proof}

Now we gather the results from this section in a final statement.

\begin{theorem} \label{concreteconv1}
For $k\in\N$, let $\Phi_k:\mc{K}_c(\R^d)\to\R$ be lower semicontinuous mappings 
which satisfy condition \eqref{uniapprox},
let constraints $\Psi_k:\mc{K}_c(\R^d)\to\R^m$ be defined by \eqref{psidef},
and assume that $\mc{M}\neq\emptyset$.
Then $\argmin_{C\in\mc{M}}\Phi(C)\neq\emptyset$, and
\[\lim_{k\to\infty}\mc{D}(\argmin_{C\in\mc{M}_k}\Phi_k(C),
\argmin_{C\in\mc{M}}\Phi(C))=0.\]
\end{theorem}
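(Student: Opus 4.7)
The plan is to reduce the statement to the abstract convergence result, Theorem \ref{minconvthm}, applied to the sets $\mc{M}$, $\mc{M}_k$ and the functionals $\Phi$, $\Phi_k$ fixed in this section. It therefore suffices to verify the four hypotheses of that theorem: $\mc{M}$ is nonempty and compact, each $\mc{M}_k$ is nonempty and compact with $\mc{D}_H(\mc{M},\mc{M}_k)\to 0$, $\Phi$ is continuous and condition \eqref{uniapprox} holds, and $\argmin_{C\in\mc{M}_k}\Phi_k(C)\neq\emptyset$ for every $k$. Continuity of $\Phi$ is one of the standing assumptions of Section \ref{concrete}, and \eqref{uniapprox} is part of the hypothesis of the theorem itself.

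The set-theoretic statements are precisely what Proposition \ref{setconvprop} delivers. Since $\mc{M}\neq\emptyset$ by assumption and the constraints $\Psi_k$ are defined by \eqref{psidef}, that proposition yields compactness of $\mc{M}$, nonemptiness and compactness of each $\mc{M}_k$, and the convergence $\mc{D}_H(\mc{M},\mc{M}_k)\to 0$, provided the metric quantity $\kappa_{A_k}$ associated with the Galerkin sequence tends to zero. This additional density requirement should be regarded as part of the standing setup of a nested Galerkin sequence (cf.\ Theorem \ref{ngsexists}), and I would flag it explicitly at the start of the argument.

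It remains to show that $\argmin_{C\in\mc{M}_k}\Phi_k(C)\neq\emptyset$ for each $k$. For this I would pick any $C_0\in\mc{M}_k$, set $\beta:=\Phi_k(C_0)$, and observe that $\mc{M}_k\cap S(\Phi_k,\beta)$ is nonempty. By Lemma \ref{exargmin}a, $S(\Phi_k,\beta)$ is closed, and intersecting with the compact set $\mc{M}_k$ yields a nonempty compact set. Lemma \ref{exargmin}c then produces the desired minimizer. With all four hypotheses verified, Theorem \ref{minconvthm} delivers both $\argmin_{C\in\mc{M}}\Phi(C)\neq\emptyset$ and the asserted one-sided Hausdorff convergence of the argmin sets.

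The proof is essentially a bookkeeping exercise that assembles Proposition \ref{setconvprop}, Lemma \ref{exargmin} and Theorem \ref{minconvthm} in sequence, and no step poses a genuine technical obstacle. The only point that requires care is the implicit reliance on $\kappa_{A_k}\to 0$: without it, Proposition \ref{setconvprop} still gives compactness of the $\mc{M}_k$ but not the set convergence, and the final invocation of Theorem \ref{minconvthm} would fail. I would therefore make this assumption on the Galerkin sequence explicit at the outset of the proof.
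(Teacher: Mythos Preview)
Your proposal is correct and follows the same route as the paper: invoke Proposition~\ref{setconvprop} for nonemptiness, compactness and $\mc{D}_H$-convergence of the constraint sets, then use Lemma~\ref{exargmin} to obtain $\argmin_{C\in\mc{M}_k}\Phi_k(C)\neq\emptyset$, and finally apply Theorem~\ref{minconvthm}. Your observation that the argument tacitly relies on $\kappa_{A_k}\to 0$ (needed for the set-convergence clause of Proposition~\ref{setconvprop}) is accurate and worth flagging, as the paper's proof invokes that proposition without restating this hypothesis.
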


\begin{proof}
By Proposition \ref{setconvprop}, the set $\mc{M}$ is compact,
and for every $k\in\N$, there exists $C_k\in\mc{M}_k$,
and the set $\mc{M}_k$ is compact.
As $\Phi_k$ is lower semicontinuous, the nonempty sets $S(\Phi_k,\Phi_k(C_k))$ are 
closed by Lemma \ref{exargmin}a.
Lemma \ref{exargmin}c yields that $\argmin_{C\in\mc{M}_k}\Phi_k(C)\neq\emptyset$.
In addition, Proposition \ref{setconvprop} ensures that condition
\eqref{setconv} is satisfied.
Thus Theorem \ref{minconvthm} applies and yields the desired statement.
\end{proof}

\section{Conclusion}

This paper lays the foundations for a systematic numerical treatment of optimization
problems in the space $\mc{K}_c(\R^d)$. 
First applications presented in \cite{Ernst} and \cite{Harrach:19} support the 
usefulness of this approach.
On the other hand, many questions remain open.
We present them as clusters of interconnected problems.

\medskip

\emph{Cluster 1: The nature of the mapping $\phi:\mc{C}_A\to\mc{G}_A$.}\\ 
Is $\phi$ piecewise affine linear with respect to Minkowski addition? 
What is its exact local modulus of continuity?
Which local and global properties of a functional $\Phi:\mc{K}_c\to\R$ 
does the composition $\Phi\circ\phi:\mc{C}_A\to\R$ inherit?
What is the structure of $\mc{G}_A$ as a subspace of $\mc{K}_c(\R^d)$?

\medskip

\emph{Cluster 2:  More on the coordinate space $\mc{C}_A$.}\\
What are the extremal rays of $\mc{C}_A$, and does this knowledge have any implications
for practical computations?
Are there more redundancies in the full system (\ref{nonempty},\,\ref{touching})
than those identified in Theorem \ref{redundancy}?
How is the algebraic structure of the vectors $a_1,\ldots,a_N$ 
reflected by the algebraic structure of the sets $\ext(Q_{A,a_i}^*)$,
and what does this tell us about $\mc{C}_A$, $\mc{G}_A$, redundancies, etc?

\medskip

\emph{Cluster 3: The design of the matrix $A$.}\\
Is there a principle that helps designing $A$ in such a way that $\delta_A$
or $\kappa_A$ is (almost) minimized over $\R^{N\times d}$?
Can these special matrices be organized in a nested Galerkin sequence?
How to balance approximation properties of $\mc{C}_A$ with local Lipschitz properties 
of $\phi$?

\medskip

\emph{Cluster 4: Offline computations.}\\
How much can we infer about the structure of $Q_{A,b}$ for a particular $b$ 
from offline computations?
Can we speed up the enumeration of its vertices using offline computations?
Can we use offline computations to solve linear programs over $Q_{A,b}$ quickly?
Can we update the vertices of $Q_{A,b}$ efficiently under changes of $b$
using information compiled in offline computations?

\medskip

\emph{Cluster 5: Local minima.}\\
Under which conditions do local minimizers of the auxiliary problems 
\eqref{cop} converge to local minimizers of the original problem \eqref{aop}?
What about KKT points?

\medskip

Further interesting questions are whether our approach can help answer theoretical
questions about optimization problems in the space of convex bodies, and whether
the approach can be extended to spaces of nonconvex sets in a meaningful way.

\medskip

We hope that at least some of these questions will be answered in the future,
and that other researchers find this programme sufficiently interesting to contribute
to its development.

\bibliographystyle{plain}
\bibliography{galerkin}

\end{document}